\theoremstyle{plain}
\newtheorem{theorem}{Theorem}
\newtheorem{proposition}[theorem]{Proposition}
\newtheorem{corollary}[theorem]{Corollary}
\newtheorem{lemma}[theorem]{Lemma}
\theoremstyle{definition}
\newtheorem{definition}[theorem]{Definition}
\theoremstyle{remark}
 \newtheorem{remark}[theorem]{Remark}
\newcommand{\eps}{\epsilon}
\newcommand{\ep}{\epsilon}
\newcommand{\al}{\alpha}
\newcommand{\gm}{{\mathfrak{m}}}
\newcommand{\gP}{{\mathfrak{P}}}
\newcommand{\Bcal}{{\mathcal B}}
\newcommand{\Ccal}{{\mathcal C}}
\newcommand{\Dcal}{{\mathcal D}}
\newcommand{\Fcal}{{\mathcal F}}
\newcommand{\Hcal}{{\mathcal H}}
\newcommand{\Ncal}{{\mathcal N}}
\newcommand{\Ocal}{{\mathcal O}}
\newcommand{\Z}{{\mathbb{Z}}}
\newcommand{\Q}{{\mathbb{Q}}}
\newcommand{\C}{{\mathbb{C}}}
\renewcommand{\P}{\mathbb{P}}
\newcommand{\F}{\mathbb{F}}
\newcommand{\Id}{\operatorname{Id}}
\renewcommand{\Im}{\operatorname{Im}}
\newcommand{\Jac}{\operatorname{Jac}}
\newcommand{\Sp}{\operatorname{Sp}}
\renewcommand{\mod}{\, \operatorname{mod} \,}
\newcommand{\mt}{\mapsto}
\newcommand{\fonction}[5]{\begin{array}{c|ccl}           
#1: & #2 & \longrightarrow & #3 \\
    & #4 & \longmapsto & #5 \end{array}}
\newcommand{\Kb}{\overline{K}}
\newcommand{\um}{{\underline{m}}}
\newmdenv[
  topline=false,
  bottomline=false,
  linecolor=blue,
  skipabove=\topsep,
  skipbelow=\topsep
]{sideruleb}
\newmdenv[
  topline=false,
  bottomline=false,
  linecolor=violet,
  skipabove=\topsep,
  skipbelow=\topsep
]{siderulev}
\title{Bounding integral points on the Siegel modular variety $A_2(2)$}
\author{Josha Box and Samuel Le Fourn}
\date\today
\begin{document}
	\maketitle
	\begin{abstract}
	   We determine two explicit upper bounds for the stable Faltings height of principally polarised abelian surfaces over number fields corresponding to $S$-integral points on the Siegel modular variety $A_2(2)$. One upper bound, using Runge's method, is uniform in $S$ as long as $|S|<3$; the other, using Baker's method, is not uniform but allows $|S|<10$.
	   Our application of a higher-dimensional Baker's method is completely explicit and improves upon what would be obtained from the general case due to Levin.
	\end{abstract}

\section{Introduction}

Diophantine methods, commonly used to determine the rational or integral points on algebraic varieties (i.e. to solve diophantine equations), tend to become particularly difficult when the underlying algebraic variety is not a curve; even more so when one desires effective results. 

For example, the Siegel varieties $A_g(n)$, which are the moduli spaces of $g$-dimensional principally polarised abelian varieties with full $n$-torsion, are widely studied but still quite mysterious for $g>1$. Important results include non-effective uniform boundedness of the full torsion under Vojta's conjecture \cite{AbramovichVarillyAlvarado17}, and effective height bounds for jacobians of hyperelliptic curves with good reduction \cite{vonKanel14}. We note that the latter result is of a different nature since it does not directly use the geometry of $A_g(n)$. 

The study of integral points on varieties was given a boost by Aaron Levin, who succeeded in extending Runge's method \cite{Levin08} (updated in \cite{Levin18}) and Baker's method \cite{Levin14} -- both classical methods for determining integral points on curves -- to varieties of any dimension. Recently, the second named author expanded on both methods and applied these to $A_2(2)$ \cite{LeFourn3}, \cite{LeFourn5}. In this article we push these methods to their limit to find stronger explicit effective results for integral points on $A_2(2)$. This is the first known explicit application of a higher-dimensional Baker's method. We hope this article paves the way for the explicit application of higher-dimensional Baker and Runge to other (modular) varieties. 

Our objects of interest are abelian surfaces over number fields with full 2-torsion (so $g=n=2$). Recall \cite{OortUeno} that over any field $k$, a principally polarised abelian surface $A_{/k}$ is isomorphic over a finite extension to either the jacobian of a (smooth) hyperelliptic curve of genus 2, or the product of elliptic curves (both endowed with the natural associated polarisations). 

When we consider an abelian surface $A$ over a number field $K$, its semistable reduction at every finite place $v$ of $K$ can be an abelian surface (potentially good reduction) or not (potentially multiplicative reduction). For our purposes, principally polarised abelian surface will be considered to reduce \enquote{nicely} at $v$ if the semistable reduction is not only an abelian surface, but also is isomorphic to the jacobian of a hyperelliptic curve of genus 2 over some finite extension. If we start with $A = \operatorname{Jac}(C)$ for some genus 2 hyperelliptic curve $C$, this is equivalent to saying that $C$ itself has potentially good reduction at $v$.

As we will see below, abelian surfaces which \enquote{reduce nicely} in this sense outside of a set of places $S$ correspond to $S$-integral points on $(A_2(2) \backslash D)$ for a certain divisor $D$, in a way which will be made precise later. This is the fundamental reason for which we adopt this interpretation. Furthermore, $A_2(2)$ has explicit (and workable) equations as a subvariety of $\P^9$, which makes it a good example to practise precise computations and methods.

Our two main results provide different insights on the effective finiteness of the integral points when the set $S$ of bad places is sufficiently small.

\begin{theorem}
\label{rungethm}
Let $(A,\lambda)$ be a principally polarised abelian surface defined over a number field $K$, with full 2-torsion defined over $K$. Consider the set $S$ of places $v$ of $K$ which are infinite or such that the semistable reduction of $A$ modulo $v$ is not isomorphic to the jacobian of a genus 2 curve. 

If $|S|<3$, we have a bound
\[
h_\Fcal(A) \leq 985,
\]
where $h_\Fcal$ is the stable Faltings height of $A$.
\end{theorem}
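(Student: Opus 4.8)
The plan is to realise the hypothesis on $(A,\lambda)$ as an $S$-integral point on $A_2(2)\setminus D$ for the appropriate boundary divisor $D$, and then apply Runge's method in the higher-dimensional form developed by Levin and refined by Le Fourn. First I would recall the precise moduli interpretation: the condition that the semistable reduction of $A$ at $v$ be the jacobian of a genus 2 curve (equivalently that $A$ ``reduces well'' in the sense of the introduction) is exactly the condition that the associated point $P\in A_2(2)(K)$ avoid $D$ modulo $v$; hence the set of such $A$ with full rational $2$-torsion and bad set $S$ corresponds to $(\O_{K,S})$-integral points of $A_2(2)\setminus D$. One then uses the explicit model of $A_2(2)\subset\P^9$ and the explicit description of $D$ (in terms of the theta-null coordinates, where $D$ is a union of the coordinate hyperplane sections) to make everything computable.

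Next I would invoke the Runge-type inequality: since $D$ decomposes into enough components that no $|S|$ of them — here with $|S|\le 2$ — can simultaneously contain the reduction of $P$ at all places of $S$ (this is the combinatorial ``Runge condition'' $|S|<m/(\dim+1)$ or its sharper tube-version in \cite{LeFourn3}), the $S$-integrality forces the height $h(P)$ with respect to the given projective embedding to be bounded by an explicit constant coming from the finitely many defining equations of $A_2(2)$ and of $D$, together with the archimedean estimates on the corresponding Siegel modular forms (the even theta-constants). This is where the bulk of the work lies: assembling the explicit constant requires bounding the coefficients of the equations, controlling the Mahler measures / denominators appearing in the key auxiliary functions, and tracking the contributions of the at most two bad places and the infinite places through the proof of Runge's theorem.

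Finally I would convert the bound on the projective height $h(P)$ into a bound on the stable Faltings height $h_\Fcal(A)$. This uses a comparison between $h_\Fcal$ and the height attached to the Siegel modular embedding: up to an explicit additive and multiplicative constant, $h_\Fcal(A)$ is comparable to $h(P)$, via the classical bounds relating the Faltings height to the Petersson norm of a suitable power of the discriminant modular form (e.g. of weight $10$ for $g=2$) and explicit estimates for theta-constants on a fundamental domain, in the spirit of \cite{LeFourn5}. Combining the Runge bound on $h(P)$ with this comparison yields the numerical value $h_\Fcal(A)\le 985$.

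The main obstacle I anticipate is not the structure of the argument — which is a direct, if intricate, application of Runge's method — but the \emph{explicitness}: one must carry genuine, non-lossy constants through (i) the moduli-to-integral-point dictionary, (ii) the archimedean analysis of theta-constants and the equations of $A_2(2)$, and (iii) the Faltings-height comparison, being careful that the ``$|S|<3$'' hypothesis is used in the sharp tubular-neighbourhood form of Runge's inequality rather than the cruder version, since otherwise the final constant would be far worse (or the range of $|S|$ smaller). Keeping each of these three steps uniform in $S$ (for $|S|\le 2$) and optimising the constants against each other is the delicate part that produces the clean bound $985$.
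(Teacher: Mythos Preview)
Your broad framework is right --- interpret $P$ as an $S$-integral point on $A_2(2)\setminus D$ via the theta embedding in $\P^9$, run a Runge-type argument to bound $h(\psi(P))$, then convert to $h_\Fcal$ via Pazuki's comparison --- and this is exactly what the paper does. But there is a genuine gap in the middle step, and it is precisely the content of the theorem.

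You invoke ``the combinatorial Runge condition $|S|<m/(\dim+1)$ or its sharper tube-version in \cite{LeFourn3}''. Neither of these gets you $|S|<3$. The naive Runge condition here (ten components, all $\Q$-rational) would require that some $D_i$ be $v$-far from $P$ for every $v$, which fails once two places are bad; and the tubular refinement in \cite{LeFourn3} only achieves $|S|<2$ --- that was the previous state of the art, and improving it to $|S|<3$ is exactly the point of this theorem. What makes the extra place possible is a specific combinatorial fact about the intersection pattern of the $D_i$ that you have not identified: the deepest vertices in the graph of intersection (the $Z_I$ with $|I|=6$) are indexed by \emph{complements of G\"opel quadruples}, and any two G\"opel quadruples meet (Lemma~\ref{lemcompletesubsetsE}(d)). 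Hence the union of any two ``cones of ancestors'' in the graph cannot cover all ten $D_i$, so even with two bad places there is always a surviving coordinate hyperplane far from $P$ at both. The paper makes this effective via Propositions~\ref{explicitprop1} and~\ref{explicitprop2} (the set of $v$-small coordinates is either of size $\le 4$ or sits inside a complement of a G\"opel quadruple; and within any G\"opel quadruple at least one coordinate is $v$-large), combined in Corollary~\ref{cortwoplaces}. This yields $h(\psi(P))\le \log 27 + 6\log 2 + \log 3 \le 8.6$, and then Pazuki gives $h_\Fcal(A)\le 985$.

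So your outline would reproduce \cite[Theorem~8.2]{LeFourn3} ($|S|<2$) but not the present theorem; the missing idea is the G\"opel-quadruple combinatorics governing the graph of intersection.
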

%

In fact, we compute a smaller bound for the Weil height with respect to a model for $A_2(2)$ (see Theorem \ref{rungethm2}). Choosing $K=\Q$ and $S=\{p,\infty\}$, we can search for $\Q$-rational points on $A_2(2)$ of height up to this bound, and obtain the following consequence (proven in Section \ref{subsecthmRungeandcorollary}).
\begin{corollary}
\label{rungecor}
 There is no genus 2 hyperelliptic curve $C$ over $\Q$ such that all Weierstrass points of $C$ are rational and $C$ has potentially good reduction at all but one of the primes.  \end{corollary}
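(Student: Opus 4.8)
The strategy is to deduce this from Theorem \ref{rungethm} (or rather its sharper quantitative version, Theorem \ref{rungethm2}), by turning the statement into an assertion about $\Q$-rational points on $A_2(2)$ and enumerating them.

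First I would set up the dictionary. Suppose $C/\Q$ is a genus $2$ hyperelliptic curve with all six Weierstrass points rational and with potentially good reduction at every prime $p \neq 2$. Then $A = \operatorname{Jac}(C)$ is a principally polarised abelian surface over $\Q$ whose full $2$-torsion is rational (the $2$-torsion of the Jacobian is generated by differences of Weierstrass points), and whose semistable reduction at each $p \neq 2$ is the Jacobian of a genus $2$ curve. So the set $S$ attached to $A$ in Theorem \ref{rungethm} is contained in $\{2,\infty\}$, which has size $2<3$. Hence $A$ corresponds to an $S$-integral point on $A_2(2)\setminus D$ with $S=\{2,\infty\}$, and Theorem \ref{rungethm2} gives an explicit bound on the Weil height of this point with respect to the chosen projective model of the moduli space; as noted in the excerpt, this bound is smaller in the case $K=\Q$ with $2$ inert (vacuously true over $\Q$).

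Next I would make the enumeration step concrete. Using the explicit equations of $A_2(2)\subset\P^9$, the $S$-integrality condition (integrality away from $2$) together with the height bound confines the point to a finite, explicitly listable set of $\Q$-points: clear denominators to get a primitive integral representative $(x_0:\cdots:x_9)$ with $\max|x_i|$ bounded, restrict the coordinates to be supported (up to the model's structure) on powers of $2$ as dictated by the $S$-integrality away from $2$, and then run through the finitely many candidates satisfying the defining equations. For each surviving point one checks whether it actually lies on $A_2(2)\setminus D$ and, if so, recovers the abelian surface; one then checks directly whether it is the Jacobian of a genus $2$ curve with the stated reduction and rationality properties, or instead a product of elliptic curves (the other case of the Oort--Ueno dichotomy), or lies on $D$.

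The main obstacle I anticipate is not the height bound itself but the combination of bookkeeping tasks around the enumeration: pinning down exactly which $\Q$-points of the $\P^9$-model correspond to \emph{Jacobians} (as opposed to products of elliptic curves or to points on the boundary divisor $D$), and verifying that none of the finitely many candidate points yields a curve with the required "potentially good reduction away from $2$" behaviour. Concretely, this means understanding the locus in the model cut out by $D$ and the product locus well enough to exclude every candidate, and confirming that the bad reduction at $2$ is genuinely forced. I expect that after the enumeration the list of genuine candidates is empty (or reduces to non-hyperelliptic-Jacobian cases), which yields the corollary; the delicate part is carrying out that exclusion rigorously rather than the Diophantine input, which is already supplied by Theorem \ref{rungethm2}.
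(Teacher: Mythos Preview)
Your strategy is exactly the one the paper uses: pass from $C$ to $(A,\lambda,\alpha_2)=\operatorname{Jac}(C)$ with $S=\{2,\infty\}$, apply the quantitative Runge bound, and enumerate. Two small sharpenings from the paper will dissolve the obstacle you flag. First, because $2\in S$ the proof of Theorem \ref{rungethm2} actually gives $h(\psi(P))\leq \log(27)+\log(2)\leq 4$ (the $6\log 2$ term disappears), so the search box is quite small. Second, the bookkeeping you worry about is essentially trivial here: by Theorem \ref{thmjacorprod} a point of $A_2(2)$ is a Jacobian if and only if \emph{no} coordinate of $\psi(P)$ vanishes, and by Proposition \ref{propalgthetafoncetreduchorsde2} the integrality away from $2$ forces all ten coordinates to have equal $p$-adic norm for every $p\neq 2$, i.e.\ after normalisation each $x_i$ is $\pm$ a power of $2$. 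So the enumeration is over $10$-tuples of signed $2$-powers of bounded height satisfying the six equations, with no zero entry; the paper reports that a direct search finds none, and no further reduction or product-locus checks are needed.
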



The proof of Theorem \ref{rungethm} uses Runge's method in higher dimensions, as introduced by Levin \cite{Levin18} and first applied to $A_2(2)$ in \cite{LeFourn3}. Note that a similar bound is obtained in \cite[Theorem 8.2]{LeFourn3}, but there the condition is $|S|<2$, whereas we allow $|S|<3$. This means in practice that our set $S$ of ``bad'' places is allowed to contain a finite place, thus providing a significant improvement to \cite[Theorem 8.2]{LeFourn3}, as witnessed by Corollary \ref{rungecor}. 

Apart from Runge's method, we also execute a higher-dimensional version of Baker's method, which needs weaker hypotheses (and allows for a larger set $S$), but gives much bigger and non-uniform bounds.

\begin{theorem}
\label{bakerthm}
Let $(A,\lambda)$ be a principally polarised abelian surface defined over a number field $K$, with full 2-torsion defined over $K$. Consider the set $S$  of places $v$ of $K$ which are infinite or such that the semistable reduction of $A$ modulo $v$ is not isomorphic to the jacobian of a genus 2 curve. 

Assume also that there is no field extension $L/K$ of degree $[L:K]\leq 4$ such that $(A,\lambda)$ is isogenous to a product of elliptic curves by an isogeny defined over $L$ with kernel $(\Z/2\Z)^2$.

If $|S|<10$, we have a bound 
\[
h_{\mathcal{F}}(A) \leq 10^{66} h_K R_{S} P_S \log^*(h_K R_S  P_S) 
\]
where $\log^*(x) = \max(\log(x),1)$, $h_{\mathcal{F}}$ is the stable Faltings height, $h_K$ is the class number of $K$, $R_S$ is the regulator of $\Ocal_{K,S}^*$ and $P_S$ is the largest norm of a prime ideal in $S$ (1 if there is none).
\end{theorem}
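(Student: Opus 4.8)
The overall strategy is to realise the hypotheses on $(A,\lambda)$ geometrically: the conditions \enquote{semistable reduction outside $S$ is a genus 2 jacobian} and \enquote{no small-degree splitting into a product of elliptic curves with $(\Z/2\Z)^2$-kernel} should translate, via the explicit model of $A_2(2)\subset\P^9$ and its boundary divisor $D$, into the statement that the point $P\in A_2(2)(K')$ attached to $(A,\lambda)$ (after a controlled extension $K'/K$ of bounded degree, to split the polarisation-compatible structure) is an $S'$-integral point on $A_2(2)\setminus D$ for a set $S'$ of places above $S$ with $|S'|$ still controlled. The excised divisor $D$ should be the locus where the semistable reduction degenerates, i.e. the union of the boundary components plus the components parametrising products of elliptic curves; the degree-$4$ extension hypothesis is exactly what is needed to keep $P$ away from the \enquote{product} components without passing to a field of uncontrolled degree. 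I would first make all of this precise, citing the description of $A_2(2)$, $D$ and the dictionary between abelian surfaces reducing well and integral points that the paper refers to.

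Next I would apply the higher-dimensional Baker's method — in the explicit form the second author developed in \cite{LeFourn5}, refining Levin \cite{Levin14} — to the pair $(A_2(2)\setminus D, D)$. This requires checking the hypotheses of that method: that $D$ has enough components (in the relevant sense, at least $\dim + \text{rank of }S$, which is where the bound $|S|<10$ comes from, since $\dim A_2(2)=3$ and one needs roughly $|S| + \dim < $ the number of available boundary components counted with the combinatorial condition), that the $A_2(2)$ model and a set of defining equations for $D$ are available with explicit height and degree data, and that one can bound the heights of all auxiliary quantities (the linear forms in logarithms, the $S$-unit equations that arise) explicitly. The method produces a bound on the Weil height $h(P)$ of the integral point in terms of $h_K$, $R_S$, $P_S$ and absolute constants coming from Baker–Wüstholz/Matveev-type estimates; gathering all constants gives the $10^{66}$.

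The last step is to convert the Weil-height bound on $P\in A_2(2)(K')$ into the stated bound on the stable Faltings height $h_{\mathcal F}(A)$. This uses a comparison inequality between $h_{\mathcal F}(A)$ and the height of the corresponding point on $A_2(2)$ — such a comparison (with explicit constants, of the shape $h_{\mathcal F}(A) \ll h(P) + O(1)$, the constants independent of $A$) is standard via Pappas/Bost-type bounds or the explicit version in \cite{LeFourn3}; one also checks that passing from $K$ to the bounded-degree extension $K'$ changes $h_K$, $R_S$, $P_S$ only by absolutely bounded factors (degree $\le 4$), which get absorbed into the constant, and that $h_{\mathcal F}$ is insensitive to the base change since it is stable. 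Combining yields the displayed inequality with $\log^*$ accounting for the self-improving term in Baker's method.

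The main obstacle I expect is twofold and both parts are \enquote{bookkeeping made painful by the demand for explicitness}: first, verifying that the geometry of $(A_2(2)\setminus D, D)$ genuinely satisfies the combinatorial hypothesis of the higher-dimensional Baker's method with the numerology that yields the clean threshold $|S|<10$ — this needs a precise count of boundary components of $D$ and their intersection behaviour in the $\P^9$-model, and is where a naive application would lose the constant; second, propagating \emph{explicit} height and degree bounds for the many defining equations and intermediate $S$-unit equations through the machinery so that the final constant is an honest $10^{66}$ rather than an unspecified effective constant. The elliptic-splitting hypothesis is essential precisely to avoid an uncontrolled field extension in the first part, so one must also be careful to check that every other step (splitting the level structure, the semistable model) costs only a bounded-degree extension.
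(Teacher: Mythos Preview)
Your plan misses the central geometric observation that makes the threshold $|S|<10$ possible. A direct application of Levin's higher-dimensional Baker's method (or its refinement in \cite{LeFourn5}) requires $(m_\Bcal-1)|S|<n$, where $n=10$ is the number of components of $D$ and $m_\Bcal$ is the smallest integer such that every intersection of $m_\Bcal$ components $D_i$ is zero-dimensional. Here $m_\Bcal=5$ (azygous quadruples of indices still give one-dimensional intersections), so the standard method only yields $|S|\leq 2$ --- no better than Runge. The paper's key new input is Proposition~\ref{propstartBaker}: every pairwise intersection $D_i\cap D_j$, though one-dimensional, is \emph{blown down to a single point} by a $D$-unit of the form $\pm x_k/x_\ell$. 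This means that as soon as $P$ is $v$-close to two divisors for a single place $v$, one already has a rational function sending $P$ $v$-close to $1$, and the linear-forms-in-logarithms machinery applies (via Lemma~\ref{lemLevinmod}). Pigeonhole then requires only $|S|<10$. Your numerology \enquote{$|S|+\dim<$ number of components} is not how the method works and would not survive contact with the actual intersection pattern.

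You also misidentify the role of the isogeny hypothesis. It has nothing to do with controlling field extensions: the point $P$ is already in $A_2(2)(K)$ because $A[2]\subset A(K)$, and no auxiliary extension is needed to set up the integrality. Rather, Baker's method fails precisely when the chosen $D$-unit $\phi$ satisfies $\phi(P)=1$ exactly (one cannot bound $-\log|\phi(P)-1|_v$ then). The set of such $P$ is the \enquote{exclusion set}, and Proposition~\ref{exclsetsprop} identifies it, via the exceptional isomorphism $S_6\cong\Sp_4(\F_2)$ and an automorphism argument on the underlying genus~2 curve, as the locus where $(A,\lambda)$ is $(\Z/2\Z)^2$-isogenous to a product of elliptic curves over a small extension. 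The hypothesis in the theorem is there to rule this locus out, not to bound degrees of auxiliary fields.
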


\begin{remark}
First, let us point out that similar effective bounds on the height of such $S$-integral points depending on $S$ exist in \cite{vonKanel14}, as part of the effective Shafarevich conjecture for hyperelliptic curves. Based on studying Weierstrass models with minimality properties, they give the same type of bounds for any $K$ and $S$ (and with bounds of comparable orders), but under the stronger hypothesis that the variety $A$ has good reduction outside $S$ on the base field. Here, we only assume \emph{potentially} good reduction outside $S$: in fact, a pair $(A,\lambda)$ satisfying the hypotheses of the theorem will only obtain good reduction after a quadratic extension in general, in the same manner as one needs a further quadratic extension for an elliptic curve $E$ over $K$ with $E[2] \subset E(K)$ to build an isomorphism towards a Legendre form and ensure semistable reduction \cite[Proofs of Proposition III.1.7 and VII.5.5]{SilvermanAEC}. 

We also note that it is possible to combine the preparatory tools exposed in Sections \ref{secsetup} and \ref{secRunge} to remove the need for using Runge's method and Baker's method on $A_2(2)$, using e.g. the Rosenhain normal form for hyperelliptic curves. Indeed, one can use the cross-ratio coefficients of this normal form to obtain a morphism $A_2(2) \backslash D \rightarrow (\P^1 \backslash \{0,1,\infty\})^3$, sending $S$-integral points to 
$S'$ integral points where $S' = S \cup \{v, v | 2 \}$. We can then use results for the three $S'$-unit equations and bound the height of the solutions such as in \cite[Theorem 1.4]{LeFourn5}. We did not pursue this approach, because although it is more general (estimates would be obtained for any $S$), the preparatory work (going from nice reduction of abelian surfaces to integrality of theta coefficients) is not really simpler, and it does not illustrate the more conceptual application of higher-dimensional Runge and Baker methods that we use here.

\end{remark}
\begin{remark}
The extra condition that $(A,\lambda)$ is not isogenous to a product of elliptic curves by an isogeny with kernel $(\Z/2\Z)^2$ is a byproduct of our use of Baker's method. When $|S|<7$, this condition can be weakened; see Remark \ref{remarkexclusionsets}.
\end{remark}

Theorem \ref{bakerthm} relies on an improved application of Levin's higher-dimensional Baker method, as suggested in \cite{LeFourn5}. The novelty of our approach is that we enhance Levin's ideas for $A_2(2)$ by taking crucial advantage of the behaviour of intersections of the irreducible components of the divisor $D$ with respect to which the integrality is defined.  A \enquote{regular} application of Levin's method would only allow for $|S|<2$, which is already treated by Runge's method above. What happens here is that the intersection of two irreducible components of $D$ can always be blown down to a point by a good rational function on $A_2(2)$. As a consequence, it will be applied to any $S$ for which $|S|<10$ (the number of irreducible components of $D$). This is, admittedly, a peculiar situation, but the authors' hope is that it can be observed in other varieties of interest, and thus improve significantly upon the potential of Levin's ideas.\\

The structure of the paper is as follows. In Section \ref{secsetup}, we start with basic facts and definitions about the variety $A_2(2)$ and its compactification $A_2(2)^S$. The theta coordinates and equations of $A_2(2)^S$ in $\P^9$ and the divisor $D$ are recalled, after which we state precisely the integrality hypothesis and its interpretation in theta coordinates. We prove several useful \enquote{transitivity} statements for the natural action of $\Sp_4(\F_2)$ on $A_2(2)^S$. To conclude this section, we compute the \enquote{graph of intersection} of the irreducible components of $D$, which will be used for both methods afterwards (and helps determining their scope in a rather visual way).

In Section \ref{secRunge} , we make the precise estimates necessary for realising effectively our Runge-type method and regroup them to obtain Theorem \ref{rungethm} and Corollary \ref{cortwoplaces}.

Finally, in Section \ref{secBaker}, after recalling the original strategy of Levin for higher-dimensional Baker's method, we explain this phenomenon of blowing-down cycles and how it applies to the method (in a general context), before making this explicit in the case of $A_2(2)^S$. As mentioned, some closed subsets called \enquote{exclusion sets} appear (which are responsible for the hypothesis of non-isogenicity in Theorem \ref{bakerthm}), which we pin down precisely in the last paragraph of this paper. 

We have chosen to verify some simple computations using \texttt{Magma}. The code for these can be found at 
\[
\texttt{\href{https://github.com/joshabox/IntegralpointsonA22}{https://github.com/joshabox/IntegralpointsonA22}}\; .
\]
\section{Setup of the integrality problem}
\label{secsetup}
In this section, we recall the definitions of our objects of interest and establish their basic properties.

\subsection{\texorpdfstring{The Siegel modular variety $A_2(2)^S$}{The Siegel modular variety A2(2)}}
We start with $\Gamma=\Sp_4(\Z)$ and its action on the Siegel half space 
\[
\Hcal_2 := \{ \tau \in M_2(\C) \, , {}^t \tau = \tau, \, \Im \tau >0 \},
\]
where the positivity is as a real symmetric matrix. It acts naturally by 
\[
\begin{pmatrix} A & B \\ C & D \end{pmatrix} \cdot \tau := (A \tau + B)(C \tau + D)^{-1},
\vspace{-0.4cm}
\]
where $A,B,C,D$ are $2 \times 2$ matrices. For any $M= \begin{pmatrix} A & B \\ C & D \end{pmatrix}$, we define $j_M(\tau) := \det (C \tau + D)$, which is a cocycle for this action (see \cite[Proposition VII.1.1]{Debarre99} for proofs of those claims).

We consider 
\[
\Gamma(2) := \left\{ \gamma \in \Gamma, \gamma = I \mod 2 \right\},
\]
the congruence subgroup of level 2, and denote by $A_2(2)(\C)$ the quotient $\Gamma(2) \backslash \Hcal_2$. The following can be found in \cite[Theorem V.2.5]{ChaiFaltings} and its associated sections. The set $A_2(2)(\C)$ is canonically the set of complex points of a quasi-projective normal algebraic variety of dimension 3 over $\Q$  denoted by $A_2(2)$, which admits a Satake compactification (as a projective normal variety) denoted by $A_2(2)^{S}$. Furthermore, the boundary $\partial A_2(2) := A_2(2)^S \backslash A_2(2)$ is of dimension 1.

The variety $A_2(2)$ is also the coarse moduli space of principally polarised abelian surfaces in characteristic 0 with full symplectic level 2 structure.

\subsection{The integrality question and goal of the paper}

As we will be able to prove later (see Theorem \ref{thmjacorprod} or  \cite[Proposition 7.9 and before]{LeFourn3}), there are ten irreducible effective divisors $D_1, \ldots, D_{10}$ on $A_2(2)^S$ (all defined over $\Q$) whose points of the union 
\[
D = \bigcup_{i=1}^{10} D_i
\]
parametrise (outside the boundary) products of elliptic curves with their natural polarisations (and any choice of symplectic basis).

We will thus be interested, for a number field $K$ and a finite set of places $S$ containing the archimedean ones, in 
\[
(A_2(2)^S \backslash D)(\Ocal_{K,S}),
\]
which corresponds to the set of moduli of triples $(A,\lambda,\alpha_2)$ defined over $K$ whose semistable reduction outside $S$ is an abelian surface not isomorphic (with polarisations) to a product of elliptic curves.

We will see that up to small error for places above 2, this has a natural interpretation in terms of integral points on a model of $A_2(2)$.

Our goal is to bound explicitly those integral points in terms of their Faltings height (and projective height), assuming that $|S|$ is small. First, for $|S| \leq 2$, we will apply Runge's method. Then for $|S| < 10$ we will apply Baker's method. This is more general, but does give worse bounds.

In order to apply Runge's method, it will be convenient to work out precisely the graph of intersection of the divisors, as defined in \cite{LeFourn5}. For this, it is very worthwhile to closely study the action of $\mathrm{Sp}_4(\F_2)$ on $A_2(2)$, and in particular how it permutes the divisors $D_1,\ldots, D_{10}$. 

\subsection{Theta constants and equations of the variety}

The general definition of theta functions for us, inspired by Igusa, is the following: for any $\underline{m}=(m',m'') \in \Z^4$ with $m', m'' \in \Z^2$ (all row vectors), we define for all $\tau \in \Hcal_2$
\[
\Theta_{\underline{m}}(\tau)   :=  \sum_{p \in \Z^2} \exp \left( i \pi (p+m'/2)\tau{}^t(p+m'/2) + i \pi (p+m'/2){}^t m''\right)
\]
For any row vector $\underline{n} \in \Z^4$, it is easily checked that
\[
\Theta_{\underline{m}+2\underline{n}} = (-1)^{m'{}^t n''} \Theta_{\underline{m}} \quad \textrm{and} \quad \Theta_{-\um} (\tau) = \Theta_{\um} (\tau).
\]


This already proves that when $m'{}^t{m''}$ is odd, the associated theta function is 0 at $z=0$. On another hand, because of this formula, the square of $\Theta_{\um}$ only depends on $\um$ modulo $2 \Z^4$. We have thus defined 16 functions of $\tau$. Six of those are zero, and the ten remaining ones correspond to the classes of $\um$ modulo 2 that are called the \textit{even theta characteristics}, listed here: 
\begin{equation}
\small
\label{eqevenchartheta}
E = \{(0000),(0001),(0010),(0011),(0100),(0110),(1000),(1001),(1100),(1111) \}.
\end{equation}

Recall  \cite[Theorem 5.2]{vdG82} that the ten even theta functions define an embedding
\begin{equation}
\label{eqdefplongementpsi}
\fonction{\psi}{A_2(2)}{\P^9}{\overline{\tau}}{(x_\um=\Theta_{\underline{m}}^4 (\tau))_{\underline{m} \in E}}
\end{equation}
which induces an isomorphism between $A_2(2)^S$ and the subvariety of $\P^9$ (with coordinates indexed by $E$) defined by the linear equations 
\begin{eqnarray}
\label{eqA22inP9}
x_{1000} - x_{1100} + x_{1111} - x_{1001} & = & 0 \\
\label{eqA22inP92}
x_{0000} - x_{0001} - x_{0110} - x_{1100} & = & 0 \\
\label{eqA22inP93}
x_{0110} - x_{0010} - x_{1111} + x_{0011} & = & 0 \\
\label{eqA22inP94}
x_{0100} - x_{0000} + x_{1001} + x_{0011} & = & 0 \\
\label{eqA22inP95}
x_{0100} - x_{1000} + x_{0001} - x_{0010} & = & 0 
\end{eqnarray}
together with the quartic equation 
\begin{equation}
\label{eqA22inP96}
\left( \sum_{m \in E} x_m^2 \right)^2 - 4 \sum_{m \in E} x_m^4 = 0.
\end{equation}
\begin{remark}
Following \cite[p. 396 and 397]{Igusa64}, these equations can also be presented in a reduced form as defining a quartic in $\P^4$, for the reader who would prefer doing computations manually (which we do not, with the exception of the proof of Corollary \ref{cortwoplaces}). Our choice has been to use \texttt{Magma} throughout to keep pure discussion of the computations to the minimum (and ensure correctness).
\end{remark}

Now, the theta functions enjoy a modularity transformation formula \cite[p. 227]{Igusa64}, whose expression is the following: 
\[
\Theta_{\um}(M \cdot \tau) = \zeta (M) e (\phi_{\um}(M^{-1})) \sqrt{j_M(\tau)}\Theta_{\um \odot M}  (\tau) ,
\]
where $\zeta (M)$ is a 8-th root of unit depending only on $M$, $\phi_{\um}$ will be made explicit later, and 
\begin{equation}
\label{eqactSp4onE}
\um \odot \begin{pmatrix} A & B \\ C & D \end{pmatrix} := \um \cdot  \begin{pmatrix} A & B \\ C & D \end{pmatrix} - (({}^t CA)_0,   {}^t DB)_0),
\end{equation}
where $A_0$ is the row vector formed by the diagonal coefficients of a square matrix $A$. We only care about the fourth powers of our theta functions, and as $4\phi_\um(M^{-1})$ can be made more explicit: we obtain
\begin{equation}
\label{eqmodularityxm}
x_\um (M \cdot \tau) = \zeta(M)^4 (-1)^{\um \cdot {}^t((B{}^t A)_0), (C{}^t D)_0))}j_M(\tau)^2 x_{\um \odot M}( \tau).
\end{equation}

The action of $\Gamma$ on $\Hcal_2$ thus amounts via $\psi$ to an action of $\Sp_4(\F_2)$ on $A_2(2)^S \subset \P^9$ via signed permutations of coordinates, in the shape 
\begin{equation}
\label{eqdotactioncoordinates}
(x_\um)_{\gm \in E}^M = \left((-1)^{\um \cdot {}^t((B{}^t A)_0), (C{}^t D)_0))} x_{\um \odot M}\right)_{\gm \in E}. 
\end{equation}

Forgetting about the signs, the permutation of coordinates is induced by the action on $E \subset \F_2^4$ in \eqref{eqactSp4onE}.

This action is the key to understanding the combinatorics at play, so we first explain all its basic properties.

\subsection{Properties of the dot action}

 We identify $E$ with its natural image inside $\F_2^4$. 

\begin{lemma}
The operation $(\underline{m},M) \mapsto \underline{m} \odot M$ is indeed a group action of $\operatorname{Sp}_4(\F_2)$ on $\F_2^4$, which furthermore preserves the quadratic form $q_2$ on $\F_2^4$ given by 
\[
q_2((m',m'')) = (m') {}^t m''.
\]
Consequently, this action stabilises $E$, as it is the set of isotropic vectors of $q_2$.
\end{lemma}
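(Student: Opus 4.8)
The plan is to prove the two assertions separately --- that $\odot$ is a group action of $\Sp_4(\F_2)$ on $\F_2^4$, and that each map $v \mapsto v\odot M$ preserves $q_2$ --- and then deduce the statement about $E$.

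I would first remove the sign: over $\F_2$ the factor $-1$ in \eqref{eqactSp4onE} is trivial, so $\um \odot M = \um M + f(M)$ with $f\!\left(\begin{smallmatrix} A & B\\ C& D\end{smallmatrix}\right) = \big(({}^tCA)_0, ({}^tDB)_0\big) \in \F_2^2 \times \F_2^2$, and because matrix multiplication and the diagonal-extraction $(\cdot)_0$ are integral operations, this descends to a well-defined operation of $\Sp_4(\F_2)$ on $\F_2^4$ depending only on $M \bmod 2$. A direct expansion shows that the associativity law $(\um \odot M)\odot N = \um \odot (MN)$ is equivalent to the twisted cocycle identity
\[
f(MN) = f(M)\,N + f(N),
\]
while $\um \odot I = \um$ is trivial ($B = C = 0$ for $M = I$); together these give the (right) group action. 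I expect this cocycle identity to be the main obstacle: it is a block-matrix computation over $\F_2$ whose only delicate point is the behaviour of the diagonal $(\cdot)_0$ through a matrix product. It is in fact the combinatorial shadow of the functoriality of Igusa's theta transformation formula recalled above, so it may simply be invoked as classical; alternatively, to keep the bookkeeping finite, one checks $f(Mg) = f(M)\,g + f(g)$ for all $M$ and all $g$ in a generating set of $\Sp_4(\F_2)$ --- e.g.\ the Siegel unipotents $\left(\begin{smallmatrix} I & B\\ 0 & I\end{smallmatrix}\right)$ and $\left(\begin{smallmatrix} I & 0\\ C & I\end{smallmatrix}\right)$ with $B,C$ symmetric, the Levi blocks $\left(\begin{smallmatrix} {}^tU^{-1} & 0\\ 0 & U\end{smallmatrix}\right)$ with $U \in \GL_2(\F_2)$, and the Weyl element $\left(\begin{smallmatrix} 0 & I\\ I & 0\end{smallmatrix}\right)$, for which $f$ is zero or nearly so --- and then extends to arbitrary $N$ by induction on word length.

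For the invariance of $q_2$, the most hands-on route (granting the group action) is to test $q_2(\um \odot M) = q_2(\um)$ on the same generators. For $M = \left(\begin{smallmatrix} I & B\\ 0 & I\end{smallmatrix}\right)$ one finds $\um \odot M = (m',\, m'B + m'' + B_0)$, and on expanding $q_2$ the two terms $m'B\,{}^tm'$ and $m'\,{}^tB_0$ that appear coincide modulo $2$ --- since $x^2 = x$ and $2 = 0$ in $\F_2$ --- and so cancel, leaving $q_2(\um)$; the remaining generators are similar (and carry no correction term). The conceptual counterpart is that $q_2$ is a quadratic refinement of the standard symplectic pairing $\langle\cdot,\cdot\rangle$ on $\F_2^4$ (so $q_2(v + w) = q_2(v) + q_2(w) + \langle v, w\rangle$), that $v \mapsto q^{(v)} := q_2 + \langle v, \cdot\rangle$ is a bijection of $\F_2^4$ onto the set of quadratic refinements of $\langle\cdot,\cdot\rangle$, and that $\odot$ is the transport through this bijection of the natural action of $\Sp_4(\F_2)$ on refinements (namely $q \mapsto q\circ M^{\pm 1}$); since that action preserves the Arf invariant and $\operatorname{Arf}(q^{(v)}) = \operatorname{Arf}(q_2) + q_2(v) = q_2(v)$ (the split form $q_2$ having trivial Arf invariant), this recovers both the group-action property and the identity $q_2(\um \odot M) = q_2(\um)$.

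Finally, by definition $E$ is the set of classes $\um \bmod 2$ with $m'\,{}^tm''$ even, i.e.\ $E = q_2^{-1}(0)$, the isotropic vectors of $q_2$; indeed $m_i' m_i''$ vanishes for three of the four values of $(m_i', m_i'')$ and equals $1$ for the fourth, so $q_2 = m_1'm_1'' + m_2'm_2'' = 0$ has $3^2 + 1^2 = 10 = |E|$ solutions. Since each $v \mapsto v \odot M$ preserves $q_2$ and is bijective, it maps $q_2^{-1}(0)$ onto itself, hence stabilises $E$.
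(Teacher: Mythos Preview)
Your proposal is correct, but the paper takes a much shorter route: it simply cites Igusa's book \cite[Propositions V.6.1 and V.6.3]{IgusaThetaF} for both the action property and the invariance of $q_2$, noting only that one could verify everything by hand from the definition of $\Sp_4(\F_2)$.

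Your approach, by contrast, actually carries out that verification. You reduce the action property to the cocycle identity $f(MN)=f(M)N+f(N)$ and propose to check it for arbitrary $M$ against a generating set of $\Sp_4(\F_2)$, with the inductive extension to all $N$; this works as stated. For the invariance of $q_2$ you give two alternatives: a direct computation on generators (your sample case $M=\left(\begin{smallmatrix} I & B\\ 0 & I\end{smallmatrix}\right)$ is correct --- the cancellation $m'B\,{}^tm'=m'\,{}^tB_0$ over $\F_2$ is exactly the point), and the conceptual argument via quadratic refinements and the Arf invariant, which also goes through. The latter is the cleanest self-contained route and has the pleasant feature of proving the group-action property and the $q_2$-invariance simultaneously, once one identifies $\odot$ with the transport of the natural action on refinements through the bijection $v\mapsto q^{(v)}$. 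What the paper's citation buys is brevity and an authoritative reference; what your argument buys is that the reader need not consult Igusa, and it exposes the underlying structure (affine action with a crossed homomorphism, Arf invariant as the preserved quantity) that the citation hides.

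One minor caution: in the generator check for the cocycle, you still need the identity for \emph{arbitrary} $M$ against each generator $g$, so the verification is an identity in the matrix entries of $M$ subject to the symplectic relations, not a finite case check on both sides. This is what your wording implies, but it is worth saying explicitly.
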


\begin{proof}
This is the content of \cite[Propositions V.6.1 and V.6.3]{IgusaThetaF}, where $q_2$ is denoted by $e$ and $m'$ and $m''$ are normalised with half-integer values. The curious reader can check it by hand using the definition of $\Sp_4(\F_2)$.
\end{proof}

We can now study more finely this action, with definitions borrowed from Igusa again.

\begin{definition}
\label{defsyzygousazygous}
For $x,y,z \in \F_2^4$, we define
\[
e(x,y,z) = q_2(x) + q_2(y) + q_2(z) + q_2(x+y+z).
\]
A triple of distinct $x,y,z \in E$ is then called \emph{syzygous} if $e(x,y,z) = 0$ (i.e. if $x+y+z \in E$) and \emph{azygous} otherwise. 

A quadruple of distinct $x,y,z,t \in E$ is a \emph{Göpel quadruple} if every triple in it is syzygous, and an \emph{azygous quadruple} if every triple in it is azygous.
\end{definition}

\begin{proposition}
\label{proptrans}
The $\odot$ action of $\Sp_4(\F_2)$ restricted to $E$ has the following properties: 
\begin{itemize}
    \item[$(a)$] It is 2-transitive.
\item[$(b)$]It acts transitively on the 60 syzygous triples of $E$ (and also on the 60 azygous triples of $E$).
\item[$(c)$]It acts transitively on the 15 Göpel quadruples of $E$ (and hence on their complements in $E$), and the 15 azygous quadruples of $E$.
\end{itemize}
\end{proposition}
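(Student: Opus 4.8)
The plan is to first reduce all three assertions to the statement that $\Sp_4(\F_2)$ acts transitively on each of four finite sets, and then to establish that via the exceptional isomorphism $\Sp_4(\F_2)\cong S_6$. The reduction rests on one elementary observation: each $\odot M$ is an \emph{affine} map $z\mapsto zM-v_M$ of $\F_2^4$ (the vector $v_M=(({}^tCA)_0,({}^tDB)_0)$ in \eqref{eqactSp4onE} depending only on $M$), so a short computation shows
$e(x\odot M,y\odot M,z\odot M)=e(x,y,z)$: the three quadratic terms contribute $q_2(x)+q_2(y)+q_2(z)$ by the previous lemma, while $(x\odot M)+(y\odot M)+(z\odot M)=(x+y+z)\odot M$ contributes $q_2(x+y+z)$ for the same reason. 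Hence \enquote{syzygous}, \enquote{azygous}, \enquote{Göpel quadruple} and \enquote{azygous quadruple} are all $\odot$-invariant notions, so the partition of the $120$ triples (resp. of the $4$-subsets) of $E$ into these classes is $\Sp_4(\F_2)$-stable; it therefore suffices to prove single-orbit-ness on each class and to pin down the cardinalities. To get a concrete grip I would identify $(E,\odot)$ with $S_6$ acting on the ten partitions of $\{1,\dots,6\}$ into two $3$-element subsets, as in \cite[Ch.~V]{IgusaThetaF}; equivalently one fixes generators of $\Sp_4(\F_2)$, reads off their permutations of $E$ from \eqref{eqactSp4onE}, and carries out everything below as a finite check in \texttt{Magma}.

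For $(a)$, writing a point of $E$ as an unordered pair $P=\{A,A^{c}\}$ with $|A|=3$: two \emph{distinct} such partitions cannot have their $3$-element blocks meeting in $0$ or $3$ elements (either case forces $P=Q$), so $\{|A\cap B|,|A\cap B^{c}|\}=\{2,1\}$ for every pair of distinct partitions. Thus any ordered pair of distinct points of $E$ has the same incidence type, and a suitable relabelling of $\{1,\dots,6\}$ carries one to the other, giving $2$-transitivity. Along the way this identifies the point-stabiliser with $S_3\wr S_2$ of order $72$ and the stabiliser of an ordered pair with a group of order $8$, consistent with $720=10\cdot 9\cdot 8$.

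For $(b)$ and $(c)$, I would run the same dictionary in the other direction. A triple $\{x,y,z\}$ is syzygous precisely when $x+y+z\in E$, and then $\{x,y,z,x+y+z\}$ is a quadruple all of whose triples are syzygous, i.e. a Göpel quadruple; conversely every Göpel quadruple contains all $\binom{4}{3}=4$ of its triples as syzygous triples, and (as the count confirms) each syzygous triple lies in a unique Göpel quadruple, so the syzygous triples are exactly the $3$-subsets of Göpel quadruples. The Göpel quadruples are in $\Sp_4(\F_2)$-equivariant bijection with the $15$ maximal totally isotropic subspaces of $(\F_2^4,\langle\ ,\ \rangle)$ (visible from the partition model, or checked directly), and $\Sp_4(\F_2)$ is transitive on the latter by the symplectic Witt theorem; hence it is transitive on the $15$ Göpel quadruples (stabiliser of order $48$), on their $6$-element complements, and — since the stabiliser of a Göpel quadruple clearly permutes its four points transitively — on the $60=15\cdot 4$ syzygous triples. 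The remaining $\binom{10}{3}-60=60$ triples are the azygous ones, and the $15$ azygous quadruples form the complementary (\enquote{dual}) family; transitivity there follows by the same pattern, but it is cleanest simply to let the permutation computation confirm both the single orbits and the counts $60,60,15,15$.

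The hard part is not any individual transitivity argument but making the combinatorial dictionary \emph{precise}: one has to determine exactly which triples of partitions are syzygous and which $4$-subsets are Göpel or azygous, and then verify that each of these families is a single orbit and not, say, two orbits of equal size — something $2$-transitivity alone does not preclude. Because doing this bookkeeping by hand is error-prone, the route I would actually commit to paper is the explicit one: fix generators of $\Sp_4(\F_2)$, compute once and for all the induced permutations of $E\subset\F_2^4$ from \eqref{eqactSp4onE}, and verify the four transitivity statements and the cardinalities by a direct finite computation, keeping the $S_6$-on-partitions picture as the conceptual explanation and as a sanity check.
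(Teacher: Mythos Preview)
Your argument is correct, but it takes a genuinely different route from the paper. The paper invokes a single black-box result from Igusa, namely \cite[Proposition V.6.2]{IgusaThetaF}: two $k$-tuples in $\F_2^4$ lie in the same $\odot$-orbit if and only if they have the same affine dependency pattern, the same $q_2$-values, and the same $e$-values on all subtriples. This criterion immediately gives $(a)$ and $(b)$ for $k=2,3$; for $(c)$ the paper observes that Göpel quadruples are (translates of) maximal isotropic planes, hence affinely dependent, and applies the $k=4$ case, while for azygous quadruples the $k=4$ case applies directly since they are affinely independent with all triples azygous.

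Your approach instead builds everything from the exceptional isomorphism $\Sp_4(\F_2)\cong S_6$ acting on the ten $3+3$ partitions, supplemented by Witt's theorem for the Göpel case and an explicit machine check for the rest. This is more self-contained (you do not need to look up or reprove Igusa's orbit criterion) and the partition picture gives a pleasant combinatorial explanation of $2$-transitivity, but it is less uniform: you end up with separate arguments for each case and must defer the azygous transitivity to computation, whereas Igusa's criterion dispatches all four transitivity claims by the same mechanism. Both routes ultimately lean on an unproved identification (your Göpel $\leftrightarrow$ Lagrangian bijection, the paper's ``can be checked to be specific translates''), so neither is fully elementary; the paper's is simply shorter because it outsources the hard work to a single citation.
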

\begin{proof}

First, the counting of triples and quadruples with the required properties can be done by hand or via \texttt{Magma}.

With different notations and wording, \cite[Proposition V.6.2]{IgusaThetaF} states the following: for any two sequences $(x_1, \cdots, x_k)$ and $(y_1, \cdots, y_k)$ of $\F_2^4$, there exists $M \in \Sp_4(\F_2)$ such that $x_i \odot M = y_i$ if and only if the subsequences which are affinely dependent have the same indices, and for any triples of distinct indices $i,j,k$, $q_2(x_i) = q_2(y_i)$ (and the same for $j,k$) and $e(x_i,x_j,x_k) = e(y_i,y_j,y_k)$.

For sequences of elements of $E$ (i.e isotropic vectors for $q$), for $k=2$, it gives part $(a)$ and for $k=3$, it gives part $(b)$ (affine independence is automatic for distinct triples in $\F_2^4$, as affine lines have only two elements).

Finally, the Göpel quadruples can be checked to be specific translates of maximal totally isotropic subpaces of $\F_2^4$ by elements of $E$; in particular they are automatically not affinely independent and we can use the $k=4$ case of the property.
\end{proof}

We now explain how much a given subset of $E$ can be expanded to one of those above.

\begin{lemma}
\label{lemcompletesubsetsE}
\hspace*{\fill}
\begin{itemize}

    \item [$(a)$] Any pair of distinct $x,y \in E$ can be completed into 4 syzygous triples, and 4 azygous triples.
    \item[$(b)$] Any syzygous triple can be completed into a unique Göpel quadruple, and any azygous triple can be completed into a unique azygous quadruple.
    \item[$(c)$] A syzygous triple is disjoint with exactly two Göpel quadruples, and an azygous quadruple is disjoint with exactly three Göpel quadruples.
    %
    %
    \item[$(d)$] No two Göpel quadruples are disjoint.
\end{itemize}
\end{lemma}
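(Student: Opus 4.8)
The plan is to deduce all four parts from a handful of incidence counts, obtained by combining the transitivity statements of Proposition~\ref{proptrans} with the cardinalities recorded there ($|E| = 10$; there are $60$ syzygous triples, $60$ azygous triples, $15$ Göpel quadruples and $15$ azygous quadruples). The underlying principle is double counting: since $\Sp_4(\F_2)$ acts transitively on the pairs of $E$, every pair of distinct elements lies in the same number of syzygous triples, in the same number of Göpel quadruples, and so on, and that common number is recovered from the corresponding incidence count; the same applies to triples, using transitivity on syzygous (resp.\ azygous) triples.

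First I would record the preliminary counts. Counting incidences between pairs and syzygous triples gives $3\cdot 60 = \binom{10}{2}\,c$, so by $2$-transitivity each pair of distinct elements of $E$ lies in exactly $c = 4$ syzygous triples, and the same argument with the $60$ azygous triples gives $4$ azygous ones; this is part~$(a)$. Counting incidences between elements and Göpel quadruples, resp.\ between pairs and Göpel quadruples, gives $4\cdot 15 = 10\cdot 6$ and $6\cdot 15 = \binom{10}{2}\cdot 2$, so each element of $E$ lies in $6$ Göpel quadruples and each pair in exactly $2$. Finally, counting incidences between syzygous triples and Göpel quadruples gives $4\cdot 15 = 60\cdot 1$, so by transitivity on syzygous triples each one lies in exactly one Göpel quadruple; running the same argument with azygous triples and azygous quadruples gives part~$(b)$. (Each of these counts can instead be verified directly in \texttt{Magma}.)

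Part~$(c)$ is then inclusion--exclusion. If $T = \{a,b,c\}$ is a syzygous triple, the number of Göpel quadruples meeting $T$ is $3\cdot 6 - 3\cdot 2 + 1 = 13$, where the last term uses that $T$, being syzygous, lies in exactly one Göpel quadruple by part~$(b)$; hence exactly $15 - 13 = 2$ Göpel quadruples are disjoint from $T$. If $Q = \{a,b,c,d\}$ is an azygous quadruple, then every triple contained in $Q$ is azygous and so lies in no Göpel quadruple, and $Q$ itself is not Göpel; so the number of Göpel quadruples meeting $Q$ is $4\cdot 6 - \binom{4}{2}\cdot 2 + 0 - 0 = 12$, leaving exactly $15 - 12 = 3$ disjoint from $Q$.

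For part~$(d)$, fix a Göpel quadruple $Q_1$. A distinct Göpel quadruple $Q_2$ cannot meet $Q_1$ in $4$ points (it would then equal $Q_1$) nor in $3$ points (that would place a syzygous triple inside two distinct Göpel quadruples, contradicting part~$(b)$), so $|Q_1\cap Q_2|\in\{0,1,2\}$. Writing $a_i$ for the number of Göpel quadruples $Q_2\neq Q_1$ with $|Q_1\cap Q_2| = i$, counting incidences between the elements of $Q_1$ and the Göpel quadruples other than $Q_1$ gives $a_1 + 2a_2 = 4\cdot(6-1) = 20$, counting incidences between the pairs in $Q_1$ and the Göpel quadruples other than $Q_1$ gives $a_2 = \binom{4}{2}\cdot(2-1) = 6$, and $a_0 + a_1 + a_2 = 14$; hence $a_2 = 6$, $a_1 = 8$, $a_0 = 0$, so no Göpel quadruple is disjoint from $Q_1$. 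Throughout, the only point requiring care is keeping track of which of parts $(a)$ and $(b)$ feeds each inclusion--exclusion or double-counting term; there is no genuinely hard step here, which is also why the whole lemma could alternatively be discharged by a short \texttt{Magma} enumeration.
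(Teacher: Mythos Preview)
Your argument is correct. Each of the double counts checks out: for $(a)$, $3\cdot60=45\cdot4$; for the auxiliary counts, $4\cdot15=10\cdot6$ and $6\cdot15=45\cdot2$; for $(b)$, $4\cdot15=60\cdot1$; the inclusion--exclusion in $(c)$ is sound once you note that azygous triples lie in no Göpel quadruple; and in $(d)$ the exclusion of $a_3$ via $(b)$ is exactly right, giving $a_2=6$, $a_1=8$, $a_0=0$.

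The paper takes a different route. For $(a)$ it argues structurally: the map $z\mapsto e(x,y,z)$ is affine linear and non-constant on $\F_2^4$, and the quadratic form $q_2$ forces the eight elements of $E\setminus\{x,y\}$ to split evenly between the two values. For $(b)$, $(c)$, $(d)$ the paper simply invokes transitivity to reduce each statement to a single explicit check (one triple, one quadruple), which is then left to inspection or \texttt{Magma}. Your approach trades that case-checking for uniform incidence counting: it never touches a specific element of $E$, and it makes the internal dependencies explicit (e.g.\ $(d)$ genuinely uses $(b)$). The paper's approach is shorter to state but relies on the reader or the computer to supply the verification; yours is self-contained and arguably more informative, at the cost of a few more lines.
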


\begin{proof}
Item $(a)$ is simply using that $z \mt e(x,y,z)$ is linear on $\F_2^4$ and non-zero when $x \neq y$. The structure of $E$ as set of isotropic vectors then imposes that there are as much elements of $E$ with image 0 (completing $\{x,y\}$ to a syzygous triple) as there are with image 1 (completing $\{x,y\}$ to an azygous triple).

Items $(b)$, $(c)$ and $(d)$ can be obtained by looking at a fixed triple or quadruple and using transitivity.
\end{proof}

\begin{remark}
For a quick exploration of those triples and quadruples, the last section gives a list of all Göpel and azygous quadruples. The \texttt{Magma} code also verifies all the claims above.
\end{remark}

\subsection{Theta coordinates, type of abelian surface, and semistable reduction}

First, the vanishing of theta coordinates indicates if the abelian surface is a jacobian or not. More precisely, we have the following.

 \begin{theorem}
 \label{thmjacorprod}
 Let $P =(A,\lambda,\alpha_2)$ a principally polarised abelian variety defined over a number field $K$ together with a symplectic 2-torsion basis such that the corresponding point $\psi(P) \in \P^9$ has coordinates in $K$. Then, $A[2](K)=A[2]$ and: 
 
 \begin{itemize}
     \item If no coordinate is 0, there exists a curve $C$ defined over $K$ and of genus 2 such that $\operatorname{Jac}(C)$ is isomorphic to $(A,\lambda)$ over an extension $K'/K$ of degree 2, and its six Weierstrass points are $K$-rational.
     
     \item Otherwise, exactly one coordinate is 0 and there exist two elliptic curves $E_1,E_2$ defined over $K$ such that $(A,\lambda)$ is isomorphic to $E_1 \times E_2$ over an extension $K'/K$ of degree 4.
 \end{itemize}
 \end{theorem}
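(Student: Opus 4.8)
The plan is to reduce the statement to the classical dichotomy for principally polarised abelian surfaces (Jacobian of a genus-2 curve vs. product of two elliptic curves) over $\overline{K}$, and then carefully descend the relevant data to $K$ using the hypothesis that $\psi(P)$ has $K$-rational coordinates. First I would recall the well-known fact (Oort--Ueno) that over $\overline{K}$ the pair $(A,\lambda)$ is either $(\operatorname{Jac}(C),\lambda_C)$ for a smooth genus-2 curve $C$, or a polarised product $(E_1\times E_2, \lambda_1\times\lambda_2)$. The key analytic input is the characterisation, due to Igusa, of which situation occurs in terms of the vanishing of the even theta constants: a period matrix $\tau$ lies in the $\operatorname{Sp}_4(\Z)$-orbit of a reducible period matrix (i.e. $A_\tau$ is a product of elliptic curves) precisely when exactly one of the ten even theta constants $\Theta_{\underline m}(\tau)$ vanishes, and all ten are nonzero exactly when $A_\tau$ is a Jacobian. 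This gives the clean coordinate criterion: no coordinate of $\psi(P)$ is zero iff $(A,\lambda)_{\overline K}$ is a Jacobian, and exactly one coordinate is zero iff it is a product. (That it is always at most one follows from the combinatorics: two even theta nulls vanish simultaneously only on loci that do not meet $A_2(2)$, a fact tied to the structure of $E$ and already implicit in the equations \eqref{eqA22inP9}--\eqref{eqA22inP96}.)

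Next I would handle $A[2](K)=A[2]$ and the rationality of the Weierstrass points. The point $P$ carries a symplectic level-2 structure $\alpha_2$; the statement that $\psi(P)\in\P^9$ has coordinates in $K$ means $P$ is fixed by $\operatorname{Gal}(\overline K/K)$ as a point of the moduli space with level structure, and since $A_2(2)$ is a \emph{fine enough} moduli space away from extra automorphisms — or more concretely, since the theta coordinates determine $(A,\lambda,\alpha_2)$ up to the sign ambiguities recorded in \eqref{eqdotactioncoordinates} which are trivial on fourth powers — Galois acts trivially on $\alpha_2$, hence on $A[2]$; thus $A[2]\subset A(K)$. In the Jacobian case, $A[2]$ is generated by differences of Weierstrass points, and a symplectic basis of $A[2]$ being $K$-rational pins the Weierstrass points down as a $K$-rational divisor; a short argument (the hyperelliptic involution is defined over $K$, and the six Weierstrass points, being the fixed locus, form a $K$-rational étale degree-6 subscheme which, having all its $2$-torsion differences rational, must be pointwise rational) gives that each Weierstrass point is individually $K$-rational. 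The curve $C$ itself is then obtained from the six rational branch points: $y^2 = f(x)$ with $f$ splitting over $K$, so $C$ is defined over $K$; the twist ambiguity (whether $\operatorname{Jac}(C)\cong(A,\lambda)$ over $K$ or only over a quadratic extension) is exactly the classical quadratic-twist obstruction — $\operatorname{Jac}(C)$ and $A$ have the same $2$-torsion with the same symplectic structure and become isomorphic over $\overline K$, hence differ by a quadratic character — which yields the stated $[K':K]\le 2$. In the product case, $E_1$ and $E_2$ are the two factors; they are defined over $K$ because the pair $\{E_1,E_2\}$ is Galois-stable (it is intrinsic to $(A,\lambda)$) and the vanishing theta coordinate, being $K$-rational and unique, distinguishes the splitting in a Galois-equivariant way, and the level structure rigidifies which elliptic curve is which and trivialises the possible swap; the degree-4 bound comes from combining the (at most) quadratic twist obstruction on each factor with the isomorphism $(A,\lambda)\to E_1\times E_2$ being defined only after adjusting both twists — in the worst case an extension of degree $2$ and then another of degree $2$.

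For the concrete references: the analytic vanishing criterion and the count of vanishing theta nulls is \cite{Igusa64}, and one can also read it off the incidence geometry of the divisors $D_i$ established later in this section (each $D_i$ is cut out by $x_{\underline m}=0$ for one even characteristic, and points of $D\setminus\partial A_2(2)$ parametrise products), so I would cite that and the arithmetic descent argument rather than reprove it. The main obstacle I anticipate is the rationality bookkeeping in the two cases — precisely controlling \emph{why} no further than a quadratic (resp. degree-4) extension is needed, i.e. making the twist/swap analysis airtight and compatible with the level-2 structure — since the geometry over $\overline K$ is entirely classical and the novelty is entirely in the descent. Everything else is an application of Igusa's theory plus standard Galois cohomology of $\mu_2$.
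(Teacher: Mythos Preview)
Your overall strategy---the Oort--Ueno dichotomy over $\overline K$, Igusa's characterisation of the product locus via vanishing of exactly one even theta null, then descent to $K$---matches the paper. The substantive difference is in how the descent is carried out, and this is where your write-up has a gap.

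In the Jacobian case the paper does not argue abstractly. It uses Thomae's formulae to write the Rosenhain parameters $\lambda_1,\lambda_2,\lambda_3$ as explicit rational functions of the $x_{\underline m}$ with coefficients in $\Q$ (a priori Thomae gives them in terms of the squares $\Theta_{\underline m}^2$, but the paper observes that the known quadratic relations among theta constants let one rewrite each $\lambda_i$, up to a sign, purely in the fourth powers $x_{\underline m}\in K$). This produces in one stroke a curve $C:y^2=x(x-1)(x-\lambda_1)(x-\lambda_2)(x-\lambda_3)$ over $K$ with visibly $K$-rational Weierstrass points; the quadratic extension then comes from the fact that the only automorphism of $(\operatorname{Jac} C,\lambda,\textrm{2-torsion})$ is the hyperelliptic involution. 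Your argument instead asserts that the hyperelliptic involution is defined over $K$ and that the Weierstrass locus is a $K$-rational subscheme, \emph{before} you have produced $C$ over $K$; this is circular. One can repair it (e.g.\ via the exceptional isomorphism $S_6\cong\Sp_4(\F_2)$ to show Galois fixes the Weierstrass points on any $\overline K$-model, then descend the configuration of six points on $\P^1$), but you would need to say so, and the explicit Thomae/Rosenhain route is both shorter and what the paper actually does.

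In the product case the paper likewise avoids the abstract swap analysis: after reducing to a diagonal period matrix it observes that the $j$-invariant of each factor is a $\Q$-rational function of the one-variable theta fourth powers, hence lies in $K$, so $E_1,E_2$ can be chosen over $K$ directly. The degree-$4$ bound then comes from counting automorphisms of $E_1\times E_2$ fixing the $2$-torsion pointwise, namely $\{\pm\Id\}\times\{\pm\Id\}$; this is equivalent to your ``quadratic twist on each factor'' but phrased as a single automorphism-group computation.
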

 
 \begin{proof}
 \hspace*{\fill}
 
 $\bullet$ The first case relies mainly on Thomae's formulae \cite[Chapter 6]{MumfordTataII} expressing cross-ratios of fourth powers of theta constants of a jacobian in terms of the roots of the sextic defining a curve. This allows in turn to rebuild normal forms from those cross-ratios, for example with the Rosenhain normal form 
 \[
 C: \quad y^2 = x (x-1) (x - \lambda_1)(x-\lambda_2)(x-\lambda_3)
 \]
 where 
 \[
 \lambda_1 = \frac{\Theta_{0000}^2 \Theta_{0010}^2}{\Theta_{0001}^2 \Theta_{0011}^2}, \quad , \lambda_2 = \frac{\Theta_{0010}^2 \Theta_{1100}^2}{\Theta_{0001}^2 \Theta_{1111}^2}, \quad \lambda_3 = \frac{\Theta_{0000}^2 \Theta_{1100}^2}{\Theta_{0011}^2 \Theta_{1111}^2}
 \]
 \cite[Lemma 2.5 with notations of equation (2.5)]{ClingherMalmandier20}. These only express the parameters in terms of squares of theta constants, but in fact, using classical relations between them (or the equations of $A_2(2)$), one obtains that 
 \[
 \lambda_1 = \pm \frac{x_{1000} x_{1001} - x_{0000} x_{0001} - x_{0010} x_{0011}}{2 x_{0001} x_{0011}}.
 \]
 The sign could be determined if necessary by complex analysis, and similar equations hold for $\lambda_2$ and $\lambda_3$. We thus obtain in this case a curve $C$ defined over $K$ such that $\operatorname{Jac}(C) \cong (A,\lambda)$ (over $\overline{K}$) and $\operatorname{Jac}(C)[2]$ is fully defined over $K$ (as the Weierstrass points of $C$, $\{0,1,\infty,\lambda_1,\lambda_2,\lambda_3\}$ are). After a good permutation of the Weierstrass points (which amounts exactly to the dot action), we  ensure that there is a basis $\beta_2$ of the two-torsion on $\operatorname{Jac}(C)$ such that $\psi(P) = \psi((\operatorname{Jac}(C),\beta_2))$, which implies that the two triples are isomorphic over $\overline{K}$.

 Now, an automorphism of $\operatorname{Jac}(C)$ fixing the polarisation and the full 2-torsion comes by Torelli's theorem from an automorphism of $C$ fixing pointwise the Weierstrass points, and such an automorphism is necessary trivial or the hyperelliptic involution, so by descent $(A,\lambda,\alpha_2)$ and $(\operatorname{Jac}(C),\beta_2)$ are isomorphic over a quadratic extension $K'$ of $K$.
 
 $\bullet$ Assume now that at least one coordinate is 0. By \cite{OortUeno}, one then knows that over $\Kb$, $(A,\lambda)$ is isomorphic to a product of elliptic curves with the product polarisation. After a permutation by an element of $\Sp_4(\Z)$, one can thus assume that $(A,\lambda,\alpha_2)$ is represented in $\Hcal_2$ by a diagonal matrix $\tau = \begin{pmatrix} \tau_1 & 0 \\ 0 & \tau_2 \end{pmatrix}$. Then the coordinates split because
 \[
 \Theta_{\um} (\tau) = \Theta_{m'_1 m''_1}(\tau_1) \Theta_{m'_2 m''_2}(\tau_2),
 \]
 where 
 \[
 \Theta_{ab}(\tau_1) = \sum_{n \in \Z} \exp ( i \pi (n+a/2)^2 \tau_1 + i \pi (n+a/2)b),
 \]
 so we fall back to four possible one-dimensional theta functions. One of them ($\Theta_{11}$) is always 0, and the other three do not vanish on the Poincar\'e half plane. Apart from the coordinate $(1111)$, we are thus looking (up to permutation of coordinates) at the Segre embedding $\P^2 \times \P^2 \rightarrow \P^8$, where in each $\P^2$ the coordinates are the three fourth powers of non-zero theta constants for respectively $\tau_1$ and $\tau_2$.
 
 We can thus assume (after renormalisation) that each $\Theta_{ab} (\tau_i)^4$ ($ab\in \{ (00),(01), (10) \}$, $i \in \{1,2\}$) belongs to $K$. Now, the $j$-invariant of the elliptic curve associated to $\tau_1$ is a rational function of the three fourth powers of theta constants with rational coefficients \cite[p. 29]{BruiniervdGZagier}, so we can find elliptic curves $E_1$ and $E_2$ defined over $K$ and such that $(A,\lambda) \cong E_1 \times E_2$ over $\Kb$ with the product polarisation. For similar reasons as in the first case, it thus amounts to looking at the automorphisms of $E_1 \times E_2$ preserving pointwise the 2-torsion, and there are always exactly 4 of them (possible extra automorphisms of elliptic curves do not preserve the 2-torsion pointwise), given by $\pm \Id$ on each component. 
 
 It thus means that there exists an extension $K'/K$ of degree 4  such that $(A,\lambda,\alpha_2)$ is isomorphic over $K'$ to $E_1 \times E_2$.
 \end{proof}

Now, this characterisation of the type of abelian surface of theta coordinates extends to every field of characteristic $\neq 2$, because theta constants can be intrinsically defined as algebraic theta constants (by Mumford's theory of theta functions), and are compatible with reduction outside of characteristic 2. This leads to the following result (see \cite[Proposition 8.4]{LeFourn4} which also deals with the case of reduction to a product of elliptic curves).

\begin{proposition}
\label{propalgthetafoncetreduchorsde2}
Let $K$ be a number field and $\gP$ a maximal ideal of $\Ocal_K$ of residue field $k(\gP)$ with $\operatorname{char} k(\gP) \neq 2$.
	Let $P = \overline{(A, \lambda,\alpha_2)} \in A_2(2)(K)$. Then, $\psi(P) \in \P^9(K)$ and if the semistable reduction of $A$ modulo $\gP$ is a jacobian of hyperelliptic curve, the reduction of $\psi(P)$ modulo $\gP$ has no zero coordinate, in other words every coordinate of $\psi(P)$ has the same $\gP$-adic norm.
	

\end{proposition}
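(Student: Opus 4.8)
The first assertion, $\psi(P)\in\P^9(K)$, is immediate: $\psi$ is a morphism of $\Q$-varieties and $P\in A_2(2)(K)$, while $A[2](K)=A[2]$ is part of Theorem~\ref{thmjacorprod}. For the statement about the reduction, the plan is to reduce the entire moduli datum modulo $\gP$ — after passing to a finite extension where $A$ acquires good reduction — then exploit that the theta coordinates commute with this reduction away from characteristic $2$, and finally apply the characteristic-$\neq 2$ analogue of Theorem~\ref{thmjacorprod} to the reduced abelian surface.

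Concretely, I would first observe that since the semistable reduction of $A$ at $\gP$ is an abelian surface, $A$ has potentially good reduction at $\gP$; hence there is a finite extension $L/K$ and a prime $\gQ\mid\gP$ of $\Ocal_L$ such that $A_L$ extends to an abelian scheme $\Acal$ over $\Ocal_{L,\gQ}$ whose special fibre $\tilde A$ is the semistable reduction — a principally polarised abelian surface which over $\overline{k(\gQ)}$ is the jacobian of a genus $2$ curve. The polarisation extends to a principal polarisation of $\Acal$ with special fibre $\tilde\lambda$, and because $\operatorname{char} k(\gQ)\neq 2$ the finite group scheme $\Acal[2]$ is étale over $\Ocal_{L,\gQ}$, so the symplectic basis $\alpha_2$ extends uniquely over $\Ocal_{L,\gQ}$ (the Weil pairing being defined integrally, this extension is again symplectic) and reduces to a symplectic basis $\tilde\alpha_2$ of $\tilde A[2]$. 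Write $\tilde P=\overline{(\tilde A,\tilde\lambda,\tilde\alpha_2)}\in A_2(2)(k(\gQ))$.

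The heart of the argument is that $\psi$ commutes with reduction: the coordinates $x_{\um}=\Theta_{\um}^4$ agree, up to a single common scalar, with the algebraic theta constants of $(\Acal,\tilde\lambda,\tilde\alpha_2)$ attached to its canonical level-$2$ theta structure, and these are sections of a line bundle on $\Acal$ whose formation commutes with base change over $\Z[1/2]$. This is the content of the algebraic theory of theta functions (see \cite{MumfordTataII}), worked out in this precise form in \cite[Proposition~8.4]{LeFourn4}; granting it, the reduction of $\psi(P)\in\P^9(L)$ modulo $\gQ$ equals $\psi(\tilde P)$. Since $\tilde A$ is geometrically a jacobian of a genus $2$ curve, it is not a polarised product of elliptic curves, so by the characteristic-$\neq 2$ version of Theorem~\ref{thmjacorprod} no coordinate of $\psi(\tilde P)$ vanishes. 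As $k(\gP)\hookrightarrow k(\gQ)$ and the reduction modulo $\gQ$ of $\psi(P)$ is the image of its reduction modulo $\gP$ under $\P^9(k(\gP))\to\P^9(k(\gQ))$, the latter also has no zero coordinate. Writing $\psi(P)$ with coordinates in $\Ocal_{K,\gP}$, at least one of them a unit, this means every coordinate is a $\gP$-adic unit, i.e. all the $x_{\um}(P)$ have the same $\gP$-adic norm, which is the claim.

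The main obstacle is exactly this compatibility: one must identify the analytically-defined functions $\Theta_{\um}^4$ with (scalar multiples of) algebraic theta constants that are already defined over $\Z[1/2]$, so that reduction can be carried out on either side of $\psi$. This is precisely where the hypothesis $\operatorname{char} k(\gP)\neq 2$ is indispensable, the level-$2$ algebraic theta theory degenerating in characteristic $2$; once \cite[Proposition~8.4]{LeFourn4} is invoked, the remaining steps — extending $\Acal$, $\tilde\lambda$ and $\alpha_2$ integrally, and the descent of the non-vanishing statement from $\gQ$ back to $\gP$ — are routine.
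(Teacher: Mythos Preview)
Your proposal is correct and follows essentially the same approach as the paper: the paper does not give a self-contained proof but refers to \cite[Proposition~8.4]{LeFourn4}, explaining that the analytic theta coordinates coincide with algebraic theta constants which are defined over $\Z[1/2]$ and compatible with reduction, so that the jacobian-versus-product criterion of Theorem~\ref{thmjacorprod} extends to the reduction in any residue characteristic $\neq 2$. Your write-up spells out precisely this argument (passing to an extension for good reduction, extending the level-$2$ structure \'etale-locally, invoking the same reference for the reduction compatibility, and descending from $\gQ$ to $\gP$), so it is in fact a more detailed version of what the paper sketches.
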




For the places above 2, the situation is a bit more complicated (in part because there is no good theory of algebraic theta constants in characteristic two), but using Igusa invariants, we can say the following.

\begin{proposition}
\label{propalgthetachar2}
Let $K$ be a number field and $\gP$ a maximal ideal of $\Ocal_K$ above 2, and $P = \overline{(A, \lambda,\alpha_2)} \in A_2(2)(K)$ as in the previous proposition.
\begin{itemize}
\item[$(a)$] If the semistable reduction of $A$ modulo $\gP$ is a jacobian of hyperelliptic curve, the coordinates of the reduction of $\psi(P)$ modulo $\gP$ all satisfy 
\[
|x_\um|_\gP \geq |2|_\gP^6 \cdot \max_{\um' \in E} |x_{\um'}|_\gP
\]
%
%
\item[$(b)$]In all cases, the coordinates satisfy 
\[
|x_\um|_\gP = \max_{\um' \in E} |x_{\um'}|_\gP
\]
with at most 6 exceptions $\um \in E$. 
\end{itemize}
\end{proposition}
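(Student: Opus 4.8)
The plan is to handle both parts by relating the reduction of the theta coordinates modulo $\gP$ to the Igusa invariants $I_2, I_4, I_6, I_{10}$ of the abelian surface (or, equivalently, of the associated genus $2$ curve when there is one), since these invariants are polynomials in the $x_\um$ with integer coefficients and, crucially, admit reduction in all characteristics including $2$. First I would recall the explicit expressions of the $I_k$ as homogeneous polynomials in the $x_\um$ (or in the fourth powers of theta constants), together with the expression of $I_{10}$ -- the discriminant analogue -- which vanishes precisely when the curve degenerates; this is the arithmetic substitute for Proposition \ref{propalgthetafoncetreduchorsde2} that survives at the prime $2$. The key reduction-theoretic input is that the semistable reduction type of $A$ at $\gP$ is detected by the $\gP$-adic valuations of the $I_k$ (good reduction to a genus $2$ jacobian $\Leftrightarrow$ the point $(I_2:I_4:I_6:I_{10})$ reduces with $I_{10}\neq 0$ in the appropriate weighted-projective sense), combined with the fact that after scaling the $x_\um$ so that $\max_{\um}|x_\um|_\gP = 1$, the $I_k$ are $\gP$-adic integers.

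For part $(a)$, I would argue as follows. Scale the coordinate vector $(x_\um)$ so that $\max_{\um\in E}|x_\um|_\gP = 1$. Then all $x_\um \in \Ocal_{K_\gP}$, and the assumption that the semistable reduction is a genus $2$ jacobian forces the reduction $\bar I_{10}$ of $I_{10}$ to be nonzero in $k(\gP)$; hence $|I_{10}|_\gP = 1$. Now $I_{10}$ is (up to a fixed power of $2$ in the denominator, coming from the normalisation of Igusa's invariants) a polynomial expression of weighted degree $10$ in the $x_\um$, and in fact -- using the structure of $A_2(2)$ in theta coordinates, where $I_{10}$ is (a constant times $2^{-k}$ times) the product $\prod_{\um\in E} x_\um$ over the ten even characteristics, as in Igusa's work -- one gets $|2|_\gP^{k}\cdot\big|\prod_{\um} x_\um\big|_\gP = |I_{10}|_\gP = 1$ for the explicit exponent $k$ (which is $6$ in our normalisation). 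Since each $|x_{\um'}|_\gP \le 1$, this yields $|x_\um|_\gP \ge |2|_\gP^{k}\big/\prod_{\um'\neq\um}|x_{\um'}|_\gP \ge |2|_\gP^{k} = |2|_\gP^6$ for every $\um$, which is exactly the claimed inequality (after unscaling, $\max_{\um'}|x_{\um'}|_\gP$ reappears on the right). I would double-check the precise constant $6$ against the normalisation of $I_{10}$ used in \cite{Igusa64} / \cite{LeFourn4}; this bookkeeping of powers of $2$ is where I expect to have to be careful.

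For part $(b)$, the point is that $A_2(2)^S$ is a union of the ten divisors $D_i = \{x_{\um_i} = 0\}$ together with the boundary, and -- by Theorem \ref{thmjacorprod} and Proposition \ref{propalgthetafoncetreduchorsde2} extended to characteristic $2$ via Igusa invariants -- the reduction $\overline{\psi(P)}$ lands on at most one coordinate hyperplane when it is not on the boundary; on the boundary (bad, i.e. potentially multiplicative, reduction) one must control how many $x_\um$ can drop in valuation. I would show that in all cases at most $6$ of the coordinates can fail the equality $|x_\um|_\gP = \max_{\um'}|x_{\um'}|_\gP$: rescale so the max is $1$, so all $x_\um\in\Ocal_{K_\gP}$ with at least one a unit; the $\Sp_4(\F_2)$-action permutes the $x_\um$ by signed permutations, so by the transitivity statements of Proposition \ref{proptrans} the configuration of non-units is constrained to one of finitely many orbits; and using the linear relations \eqref{eqA22inP9}--\eqref{eqA22inP95} together with the quartic \eqref{eqA22inP96} reduced modulo $\gP$, one checks that having $7$ or more of the $x_\um$ non-units is incompatible with at least one being a unit (each of the five linear relations involves four coordinates, so too many vanishing coordinates propagates the vanishing to all of them, and the quartic gives the remaining constraint). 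The worst case -- exactly $6$ exceptions -- corresponds to the point reducing into the intersection pattern of divisors worked out in the ``graph of intersection'' of Section \ref{secsetup}, which is why the bound is sharp.

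The main obstacle, I expect, is part $(b)$ in the potentially multiplicative (boundary) case, where there is no clean statement like Proposition \ref{propalgthetafoncetreduchorsde2}: one genuinely has to use the explicit equations of $A_2(2)^S$ modulo $\gP$ and the combinatorics of the $\Sp_4(\F_2)$-orbits of subsets of $E$ (Lemmas \ref{lemcompletesubsetsE}, Proposition \ref{proptrans}) to rule out $\ge 7$ vanishing coordinates, rather than being able to cite a moduli-theoretic input; and in part $(a)$, pinning down the exact power of $2$ in Igusa's normalisation of $I_{10}$ so that the constant comes out to exactly $6$ rather than some nearby value.
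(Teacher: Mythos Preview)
Your approach to part $(a)$ via Igusa's $I_{10}$ being, up to a power of $2$, the product $\prod_{\um\in E} x_\um$ is essentially what the cited reference \cite[Proposition 8.7]{LeFourn3} does, and the paper itself gives no independent argument here. So this part is fine, modulo the bookkeeping you already flag.

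For part $(b)$, your core mechanism is right---use the equations \eqref{eqA22inP9}--\eqref{eqA22inP96} and $\Sp_4(\F_2)$-transitivity to show that seven simultaneously non-unit coordinates force all ten to be non-units---but you are overcomplicating the framing. No moduli-theoretic input is needed at all: the paper's proof (and its Remark \ref{rem6vancoord}) stresses that $(b)$ is a statement purely about the equations of $A_2(2)^S$ over $\Z$. In particular your worry about the ``potentially multiplicative (boundary) case'' is misplaced; there is no case distinction. The cleaner reduction you are missing is this: if seven coordinates lie in the maximal ideal, the three remaining unit coordinates form a triple in $E$, which is either syzygous or azygous; by Proposition \ref{proptrans}$(b)$ the dot action is transitive on each type, so it suffices to check \emph{one} complement of a syzygous triple and \emph{one} complement of an azygous triple. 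In the syzygous case the linear equations alone already force all coordinates to vanish over $\Z$; in the azygous case one finds that the ideal generated by the seven coordinates together with the defining equations contains elements like $2x_5$ and $x_5^4$, so its radical contains every coordinate. That is the whole argument---two explicit ideal computations, no geometry of the boundary.
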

\begin{remark}
\label{rem6vancoord}
An important point is that, although crude, part $(b)$ only relies on the explicit equations in $\P^9$. Consequently, we can and will use it as a go-to estimate every time one does not have any better option.
\end{remark}

\begin{proof}
Part $(a)$ is in \cite[Proposition 8.7]{LeFourn3}. For $(b)$, suppose this is not true. After normalisation to coordinates in $\Ocal_{K,\gP}$, we can assume that at least one of them is invertible and seven have positive valuation. We consider $A_2(2)$ as a scheme over $\Z$ and suppose that 7 coordinates vanish. These 7 coordinates have indices ranging over the complement of a syzygous or an azygous triple, so it suffices to look at one explicit complement of a syzygous triple and one complement of an azygous triple. We first suppose that the variables $x_{\um}$ for $\um$ in the complement of the syzygous triple $\{(1001),(0100),(1111)\}$ all vanish. Then the equations between the $x_{\um}$s directly imply that $x_{\um}=0$ for all $\um \in E$ (and this holds over $\Z$), a contradiction. Next, we consider the complement of the azygous triple $\{(0000),(0100),(0001)\}$. Now the equations imply that the ideal generated by these seven $x_{\um}$s contains
\[
x_1+x_5,x_2+x_5,x_3,x_4,2x_5,x_5^4,x_6,x_7,x_8,x_9,x_{10}.
\]
Here $x_i$ is $x_{\um}$ where $\um$ is the $i$th element of $E$ as displayed in (\ref{eqevenchartheta}). In particular, the radical of the ideal contains all coordinates, so all of them vanish, also a contradiction. Finally, we base change this argument to the residue field of $\Ocal_K$ at $\gP$.
%
\end{proof}

\begin{remark}
When we find 6 simultaneously vanishing coordinates modulo $\gP$, their indices are not random: they will turn out to form the complement subset to a Göpel quadruple (Definition \ref{defsyzygousazygous}), as proven in the next section.

\end{remark}

\subsection{The graph of intersection of the divisors}

We can now figure out precisely what is the graph of intersection of our divisors. Recall that it is defined as follows, following \cite[Section 5]{LeFourn5}.

\begin{definition}
The vertices of the \emph{graph of intersection} are non-empty set-theoretic intersections
\[
Z_I:= \bigcap_{i \in I} D_i (\overline{K}) \text{ for } I\subset E.
\]
A set of indices $I$ is called \emph{optimal} if there is no set $J\supsetneq I$ such that $Z_J=Z_I$. The \emph{depth} of a vertex $Z$ is defined to be the size of an optimal set $I$ such that $Z=Z_I$. An edge goes from $Z_I$ to $Z_J$ if $Z_J \subsetneq Z_I$ (equivalently, if $I$ and $J$ are optimal and $I \subsetneq J$) with no intermediary intersection. Finally, the \emph{cone of ancestors} of a vertex $Z$ is the set of vertices $Z'$ from which starts a path leading to $Z$.
\end{definition}
In our situation, with help of the equations, we obtain the following graph of intersection. The number in each oval is the number of vertices of a given depth (and to which type of optimal subset of $E$ they correspond). The number in each thick arrow corresponds, for each vertex above, to its number of children below.

\begin{center}
\begin{tikzpicture}[scale=0.8, every node/.style={scale=0.8}]
\tikzstyle{oval}=[ellipse,draw,text=blue];

\node (GI) at (0,8) {Graph of intersection};
\node (De) at (7,8) {Depth};
\node (D1) at (7,6) {1};
\node (D2) at (7,3) {2};
\node (D3) at (7,0) {3};
\node (D4) at (7,-3) {4};
\node (D6) at (7,-9) {6};
\node (Z) at (-7,8) {Dimension of $Z_I$, irreducible ?};
\node (Z1) at (-7,6) {2, yes};
\node (Z2) at (-7,3) {1, no};
\node (Z3) at (-7,0) {0, no};
\node (Z4) at (-7,-3) {1, yes};
\node (Z6) at (-7,-9) {0, yes};
\node[oval] (S) at (0,6) {Singletons (10)};
\node[oval] (P) at (0,3) {Pairs (45)};
\node[oval] (Sy) at (-3,0) {Syzygous triples (60)};
\node[oval] (Az) at (3,-3) {Azygous quadruples (15)};
\node[oval] (C) at (0,-9) {Complements of Göpel (15)};
\draw[very thick] (S)--(P) node[midway,fill=white] {9};
\draw[very thick] (P)--(Sy) node[midway,fill=white]{4};
\draw[very thick] (P)--(Az) node[midway,fill=white]{4};
\draw[very thick] (Sy)--(C) node[midway,fill=white]{2};
\draw[very thick] (Az)--(C) node[midway,fill=white]{3};
\end{tikzpicture}
\end{center}

\noindent {\bf Proof of graph of intersection.}
To build the graph of intersection, we start with singletons and then add elements step by step.

First, note that transitivity of the dot action of $\Sp_4(\F_2)$ on each of the subsets of $E$ displayed in the ovals (and therefore on the corresponding sets of vanishing coordinates because of \eqref{eqdotactioncoordinates}), detailed in Proposition \ref{proptrans}, allows us to reduce to a single singleton, pair, syzygous triple, azygous quadruple or complement of G\"opel quadruple and thus saves us a lot of work.

At each step, we determine whether $Z_I$ is optimal. To prove that a set $I$ is not optimal, we formally manipulate the given equations for $A_2(2)$ together with $x_i=0$ for $i\in I$ to obtain that $x_j=0$ for some $j\notin I$. Similarly, we extract the dimension and number of irreducible components from the explicit equations. 

Conversely, to prove that a set $I$ is optimal, we exhibit for each $j\notin I$ a point $P_j\in Z_I$ such that $P_j$ has non-zero $j$th coordinate. Such points can always be found as \emph{deepest points}: for each complement $I$ of a G\"opel quadruple there is a unique point $P\in A_2(2)$, all of whose coordinates are in $\{0,1,-1\}$, such that $x_i(P)=0$ if and only if $i\in I$.

Finally, we use Lemma \ref{lemcompletesubsetsE} to determine the number of children displayed in the arrows. 

This process is rather laborious and error-prone to do by hand, so we have implemented it in \texttt{Magma}.

\begin{remark}
By formal computations, one can notice that this process  would give \emph{exactly} the same result for these equations over any base field (finite or not) of characteristic unequal to 2 and 3. In particular, reductions of divisors do not intersect more than the divisors over $\Q$ (but to be precise, the scheme-theoretic intersections are sometimes not reduced) except over $\F_2$ and $\F_3$. We will not need this, but have nonetheless worked it out in the \texttt{Magma} file.
This phenomenon is implicit in the bounds of Proposition \ref{explicitprop2} below and its proof.
\end{remark}

\begin{remark}
Even though we have not undertaken the verification of this claim and we do not need it later, it is likely that the 5 types of optimal subsets are closely related to the moduli interpretations of points of $A_2(2)^S$ (as jacobians of stable curves of genus 2). More precisely, following the notations of \cite[Proposition 1 ]{NamikawaUeno73} and taking into account that their compactification is a blow-up of the Satake compactification, we can expect that singletons are given by products of elliptic curves ($\Ncal$, type II), pairs by elliptic curves ($\Bcal$, type  III), syzygous triples and azygous quadruples by one or two rational curves ($\Ccal$, type IV) and complements of Göpel quadruples by two rational curves meeting at three points ($\Dcal$, type V).
\end{remark}
\section{Runge's method refined}
\label{secRunge}

We apply the formalism of the graph of intersection to Runge's method. Then \cite[Proposition 5.5]{LeFourn5} tells us that if $|S| \leq 2$, one can obtain an explicit bound on the height of points in $(A_2(2)^S \backslash D)(\Ocal_{K,S})$. Let us explain why.

The shape of the graph of intersection tells us the following: no union of two cones of ancestors recovers all the graph in depth 1 (i.e. contains all the divisors). This is due to the fact that any two Göpel quadruples have non-empty intersection, so the union of two complements of Göpel quadruples cannot be the full set $E$.

To exploit this fact, we need to define a consistent notion of $v$-closeness to the intersections of divisors $Z_I$ (for every place $v \in M_K$). For example, if $K$ is a number field and $v$ is a finite place of $K$ not above 2 or 3, then $P\in A_2(2)(K)$ will be $v$-close to $Z_I$ if and only if the reduction of $P$ mod $v$ belongs to the Zariski closure of $Z_I$.

As a consequence, for any integral point $P \in (A_2(2)^S \backslash D) (\Ocal_{K,S})$ where $|S|=2$, there are at most two places $v \in M_K$ for which $P$ is $v$-close to one of the divisors, and thus generates a cone of ancestors.
Taking away those two cones of ancestors, there remains a divisor $D_i$ which is $v$-far from $P$ for all $v \in M_K$, and thus allows to bound the local heights $h_{D_i,v}(P)$ for all $v$, and therefore the global height $h_{D_i}(P)$. This will be particularly easy to do here as the $D_i$ are given as coordinate hyperplanes. This is how we obtain an absolute bound on the height of $\psi(P)$.

To obtain such a bound in practice, more refined estimates are needed for three different reasons:
\begin{itemize}
    \item Our definition of integral points comes from the moduli space structure (and not the explicit equations), which makes a slight difference in the bounds.
    \item The graph of intersection of the divisors is different over fields of characteristic 2 or 3 (which tells us that even though $Z_I$s are distinct, they might still be close enough to need a finer definition of closeness to distinguish them).
    \item We need to evaluate closeness in the archimedean case.
\end{itemize}

\subsection{Estimates on the size of theta functions}
In this subsection, we refine the estimates in \cite[Proposition 8.5]{LeFourn3} on sizes of theta functions. For archimedean places and places above 2 and 3, this will provide a quantitative analogue for the part of the graph of intersection that we will need, while at other finite places it is merely a confirmation of what we already knew.

Instead of analysing the Fourier expansions of the theta functions as was done by Streng in \cite{Strengthesis} and quoted in \cite{LeFourn3}, we  only make use of the six equations satisfied by the fourth powers of the theta functions to obtain our estimates.
\begin{proposition}
\label{explicitprop1}
Consider $\tau \in \mathcal{H}_2$, and suppose that $K$ is a number field such that $x_{\um}\in K$ for each $\um\in E$. Let $|\cdot|$ be any norm on $K$. The set of $\underline{m} \in E$ satisfying
\[
|x_\um|<\begin{cases}\max_{\um' \in E} |x_{\um'}| \;\; &\text{ if } |\cdot| \text{ non-archimedean} \\
 \frac{1}{27}\max_{\um' \in E} |x_{\um'}| \;\; &\text{ if } |\cdot| \text{ archimedean}
\end{cases}
\]
either has size at most 4, or is contained in one of the 15 complements of Göpel quadruples. 
\end{proposition}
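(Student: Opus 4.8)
The plan is to work directly with the six equations \eqref{eqA22inP9}--\eqref{eqA22inP96}, exploiting the $\Sp_4(\F_2)$-action to reduce to finitely many combinatorial cases. Suppose $I \subset E$ is the set of $\um$ with $|x_\um|$ strictly smaller than the stated threshold, and assume $|I| \geq 5$. I want to show that $I$ is contained in the complement of a Göpel quadruple, i.e. $E \setminus I$ contains a Göpel quadruple; equivalently $|E\setminus I|\leq 6$ and that set is (contained in) such a complement. Since $|E|=10$, having $|I|\geq 5$ means $|E\setminus I|\leq 5$; one first pins down that $|I|\leq 6$ (so $|E\setminus I|\geq 4$), because the linear equations \eqref{eqA22inP9}--\eqref{eqA22inP95} force any coordinate to be bounded by a small constant times the sum of three others, so too many simultaneously small coordinates (combined with the quartic \eqref{eqA22inP96} which prevents \emph{all} of them being comparably small unless they all vanish) is impossible — this is essentially the mechanism already used in the proof of Proposition \ref{propalgthetachar2}(b). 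So the real content is: \emph{if $|I|\in\{5,6\}$, then $E\setminus I$ contains a Göpel quadruple.}

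To handle this, I would enumerate, up to the $\Sp_4(\F_2)$-action on $E$, the possible shapes of a subset of size $4$, $5$, or $6$. By Proposition \ref{proptrans} and Lemma \ref{lemcompletesubsetsE}, the $4$-subsets fall into a small number of orbit types (Göpel quadruples, azygous quadruples, and the "mixed" ones containing both syzygous and azygous triples), and similarly there are finitely many orbits of $5$- and $6$-subsets; these can be listed by hand or, as the authors do elsewhere, with \texttt{Magma}. For each orbit representative $J = E\setminus I$ that does \emph{not} contain a Göpel quadruple, I must derive a contradiction from the inequalities $|x_\um| < c\max_{\um'}|x_{\um'}|$ for $\um\in I$, where $c=1$ in the non-archimedean case and $c=1/27$ in the archimedean case. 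The argument will be: pick an index $\um_0\in I$ realising (up to the strict inequality) a coordinate that must be "large", i.e. use one of the linear relations \eqref{eqA22inP9}--\eqref{eqA22inP95} in which $\um_0$ appears together with three indices all lying in $I$; then $|x_{\um_0}| \leq 3c\max|x_{\um'}|$ in the archimedean case (triangle inequality) or $|x_{\um_0}|\leq \max$ of the other three $< c\max$ in the non-archimedean case — and one chases this through the linear system to conclude that the maximum coordinate is itself forced to be small, a contradiction. The combinatorial fact that makes this work is precisely that each of the five linear equations has its four indices spread among the various orbit types in a controlled way, so that whenever $E\setminus I$ misses every Göpel quadruple, some linear relation has three (or all four) of its indices inside $I$.

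The non-archimedean case is the cleaner one: there, "$|x_\um| < \max$" together with an ultrametric linear relation $x_{\um_0} = \pm x_{\um_1}\pm x_{\um_2}\pm x_{\um_3}$ with $\um_1,\um_2,\um_3\in I$ immediately gives $\um_0\in I$ as well (the strict inequality is preserved), so $I$ is "closed" under the propagation moves already catalogued in the graph-of-intersection computation; and that computation shows the only proper subsets of $E$ closed under all such propagations and of size $>4$ are the complements of Göpel quadruples. For the archimedean case one loses a factor of $3$ at each propagation step, which is why the threshold $1/27 = 3^{-3}$ appears: one needs at most three propagation steps to get from any index in a size-$\geq 5$ "bad" set to a contradiction with the maximal coordinate, so $3^3 c < 1$ suffices. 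The main obstacle I anticipate is precisely the bookkeeping in the archimedean case — verifying that three steps always suffice (never four) for every orbit type of bad subset, which requires either a careful case analysis of how the linear equations overlap the orbit representatives, or a short \texttt{Magma} verification; this is the step where the constant $27$ is genuinely being used and where an off-by-one in the propagation depth would break the bound.
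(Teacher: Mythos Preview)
Your non-archimedean strategy is fine and is essentially what the paper does (reduce to the graph-of-intersection computation over $\Z$). The archimedean argument, however, has a genuine gap.

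Your key ``combinatorial fact'' --- that whenever $E\setminus I$ contains no G\"opel quadruple, one of the five linear equations \eqref{eqA22inP9}--\eqref{eqA22inP95} has at least three of its four indices in $I$ --- is false. Take
\[
I=\{(0001),(0011),(0110),(1000),(1001)\},\qquad E\setminus I=\{(0000),(0010),(0100),(1100),(1111)\}.
\]
One checks from the list of G\"opel quadruples that $E\setminus I$ contains none of them, yet each of the five defining linear equations has \emph{exactly two} indices in $I$ and two in $E\setminus I$. So no propagation step is available, and your chain of factor-$3$ losses never starts. The paper meets exactly this configuration (their case (iii), last subcase) and remarks that ``the linear equations appear to be perfectly happy''; it is resolved there by invoking the \emph{quartic} equation \eqref{eqA22inP96}, which forces a contradiction when the five remaining coordinates are all $<\varepsilon M$ with $\varepsilon\le 1/27$. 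Your proposal mentions the quartic only in passing (for the crude bound $|I|\le 6$) and omits it from the main propagation argument, so as written the archimedean case does not close.

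There is a way to salvage your linear-only approach: in fact there are fifteen four-term linear relations on $A_2(2)^S$, one for each azygous quadruple (they are $\pm$-combinations of the five defining ones; e.g.\ $x_{1000}-x_{1100}+x_{0011}-x_{0001}=0$ is $\tfrac12(\text{eq}_1+\text{eq}_2+\text{eq}_3+\text{eq}_4-\text{eq}_5)$). With all fifteen in hand, the bad $I$ above \emph{does} admit a propagation step (three of $\{(1100),(0011),(0001),(1000)\}$ lie in $I$), and one can then continue. But you explicitly work with ``the five linear equations'', so this repair is not in your proposal; and even with it, the claim that three propagation steps always suffice to reach the constant $1/27$ is a separate bookkeeping exercise that you would still have to carry out. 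The paper instead does a direct case analysis starting from the largest coordinate, using $2$-transitivity and syzygous-triple transitivity to normalise, losing a factor $3$ at each of three branching steps, and appealing to the quartic only in the single recalcitrant case above.
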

\begin{remark}
This constant $1/27=0.037..$ is a slight improvement on the constant $0.42^4=0.031..$ found by Le Fourn \cite{LeFourn3} based on Streng's estimates \cite{Strengthesis}.
\end{remark}

\begin{proof}
In the non-archimedean case, this can be verified explicitly by considering $A_2(2)$ over $\Z$, as in the proof of Proposition \ref{propalgthetachar2} (c). Alternatively one can reason along the lines of the below proof for archimedean norms. We thus assume that $|\cdot|$ is an archimedean norm. We take large rather than small theta functions as our point of view, showing that there are at least four $x\in \{x_{\um}\mid \um\in E\}$ of size $\frac{1}{27}\max_{\um}|x_{\um}|$ and, whenever there are at most five $x$ of size $\geq \frac{1}{27}\max_{\um}|x_{\um}|$ then these contain a G\"opel quadruple.

One of the $x_{\um}$ is the largest, say of size $M:=\max_{\um}|x_{\um}|$. By considering a linear equation featuring $x_{\um}$, we find a second $x$-coordinate of size at least $M/ 3$. By transitivity of $\mathrm{Sp}_4(\F_2)$ on pairs, we may assume this pair of large $x$-coordinates is $\{x_{0000},x_{0010}\}$. In fact, we may suppose that $|x_{0000}|=M$ and $|x_{0010}|\geq M/3$. 

So equations (\ref{eqA22inP92})-(\ref{eqA22inP95}) all contain one $x_{\um}$ of size at least $M/3$, and hence a second $x_{\um}$ of size at least $M/9$. Note that $x_{0110},x_{0100},x_{0001},x_{0011}$ are the four (out of 8 remaining variables) that occur in two of those equations. These are exactly the four variables extending $\{x_{0000},x_{0010}\}$ to a syzygous triple. By transitivity of the action of $\mathrm{Sp}_4(\F_2)$ on syzygous triples, these choices are thus equivalent. 
Let us first assume that all four of those are in absolute value $<M/9$. These four together form a G\"opel quadruple. Then equations (\ref{eqA22inP92})-(\ref{eqA22inP95}) show that $|x_{1100}|,|x_{1111}|,|x_{1001}|,|x_{1000}|\geq M/9$, thus yielding a total of eight $x_{\um}$ of size $|x_{\um}|\geq M/9$.  

We may thus assume that $|x_{0100}|\geq M/9$, giving a syzygous triple of large coordinates. We can use equations (\ref{eqA22inP92}) and (\ref{eqA22inP93}) to find more ``large'' $x_{\um}$. In particular, looking at (\ref{eqA22inP93}) we have three cases: $|x_{0110}|\geq M/9$ (case (i)), $|x_{0011}|\geq M/9$ (case (ii)) and $|x_{1111}|\geq M/9$ (case (iii)). Consider first case (i). If $|x_{0110}|\geq M/9$, then we have found four coordinates of size $M/9$, namely $\{x_{0000},x_{0010},x_{0100},x_{0110}\}$, and we check that this is a G\"opel quadruple: the unique G\"opel quadruple extending our syzygous triple.

Next, consider case (ii), so $|x_{0011}|\geq M/9$. Then in equation (\ref{eqA22inP92}), one of $|x_{0001}|,|x_{0110}|,|x_{1100}|$ is $\geq M/9$. The second of these is the case just treated. The first gives us exactly that the variables in the G\"opel quadruple $\{x_{0000},x_{0001},x_{0010},x_{0011}\}$ are all at least $M/9$ in absolute value. This is the second (out of two in total) G\"opel quadruple containing the pair $\{x_{0000},x_{0010}\}$. In the third case, if $|x_{1100}|\geq M/9$, then from equation (\ref{eqA22inP9}) we find that a sixth coordinate must be at least $M/27$ in absolute value. 

Finally, we consider case (iii), where $|x_{1111}| \geq M/9 $. In equation (\ref{eqA22inP92}), we now again have three possibilities, of which $|x_{0110}|\geq M/9$ has already been treated, and the case $|x_{0001}|\geq M/9$  yields six theta functions of size at least $M/27$ by considering equation (\ref{eqA22inP9}). 

This leaves $|x_{1100}|\geq M/9$. The 5-set of ``large'' $x_{\um}$s we have selected so far does not contain a G\"opel quadruple. Moreover, the linear equations appear to be perfectly happy with the sizes of the variables: each equation has two large variables. It is here that we must invoke the power of the degree 4 equation. Assume that all remaining 5 variables have size strictly smaller than $\eps M$. Then $|x_{1100}-x_{0000}|,|x_{0100}-x_{0000}|\leq 2\eps M$ by equations (\ref{eqA22inP92}) and (\ref{eqA22inP94}) and $|x_{1111}-x_{0000}|,|x_{0010}-x_{0000}|\leq 5\eps M$ by substituting equations (\ref{eqA22inP92}) and (\ref{eqA22inP94}) into equations (\ref{eqA22inP9}) and (\ref{eqA22inP95}) respectively. So in the quartic equation (\ref{eqA22inP96}) we may replace each such $x$ by $x_{0000}$, at the expense of adding a term of size $\leq 5\eps M$ or $\leq 2\eps M$. This gives
\[
5x_{0000}^4+C=0, \text{ where } |C|\leq (6543\eps^4 + 5656\eps^3 + \eps^2 + 56\eps)|x_{0000}|^4
\]
by the triangle inequality. Now $\eps\leq 1/27$ yields $|C|<5|x_{0000}|^4$, a contradiction. 
\end{proof}

\begin{proposition}
\label{explicitprop2}
Let $|\cdot|$ be a non-archimedean norm. Then for each G\"opel quadruple $Q$, one $x\in \{x_{\um} \mid \um \in E\}$ must satisfy
\[
|x| \geq |2||3| \max_{i \in E} |x_i|.
\]
When $|\cdot|$ is archimedean, the same is true with $|2||3|$ replaced by $0.051$. 
\end{proposition}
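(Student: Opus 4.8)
The plan is to argue by contradiction, reducing first to one explicit Göpel quadruple and then playing the five linear relations \eqref{eqA22inP9}--\eqref{eqA22inP95} against the single quartic equation \eqref{eqA22inP96}. By Proposition \ref{proptrans}$(c)$ the $\odot$-action of $\Sp_4(\F_2)$ is transitive on the $15$ Göpel quadruples, and by \eqref{eqdotactioncoordinates} it permutes the coordinates $x_{\um}$ up to signs, so it preserves every $|x_{\um}|$ as well as equation \eqref{eqA22inP96}. Hence it suffices to treat the Göpel quadruple $Q_0=\{(0000),(0001),(0010),(0011)\}$ (it is a translate of a maximal isotropic subspace, so a Göpel quadruple). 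Suppose, for contradiction, that $|x_{\um}|<\eta\,M$ for all $\um\in Q_0$, where $M=\max_{i\in E}|x_i|$ and $\eta=|2||3|$ in the non-archimedean case, $\eta=0.051$ in the archimedean case; the goal is to show no such point exists, which gives $\max_{\um\in Q_0}|x_{\um}|\geq \eta\,M$ and, by transitivity, the claim for every Göpel quadruple.

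The key structural observation is that with this labelling the relation \eqref{eqA22inP9} involves only the six coordinates indexed by $E\setminus Q_0=\{(0100),(0110),(1000),(1001),(1100),(1111)\}$, whereas each of \eqref{eqA22inP92}--\eqref{eqA22inP95} involves exactly two coordinates of $Q_0$. Using the smallness of the $Q_0$-coordinates together with the (ultrametric, resp.\ ordinary) triangle inequality, equations \eqref{eqA22inP92}--\eqref{eqA22inP95} force $x_{0110}\approx -x_{1100}$, $x_{1111}\approx x_{0110}$, $x_{1001}\approx -x_{0100}$ and $x_{1000}\approx x_{0100}$, each up to an additive error of size $O(\eta M)$; substituting into \eqref{eqA22inP9} (and dividing by $2$) then gives $x_{1000}\approx x_{1100}$ as well. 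Therefore all six coordinates indexed by $E\setminus Q_0$ are equal to $\pm w$ up to $O(\eta M)$, where $w:=x_{1100}$. Since $\eta\leq 1$ the maximum $M$ cannot be attained on $Q_0$, so it is attained on $E\setminus Q_0$, which forces $|w|=M$ in the non-archimedean case and $|w|\geq(1-c\eta)M$ for an explicit small $c$ in the archimedean case.

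Now substitute this near-$\pm w$ description into the quartic \eqref{eqA22inP96}. The $Q_0$-coordinates contribute only $O(\eta^2M^2)$ to $\sum_{m\in E}x_m^2$ and $O(\eta^4M^4)$ to $\sum_{m\in E}x_m^4$; squaring a sum of six near-$\pm w$ terms gives $36\,w^4$ up to $O(\eta M^4)$, while $4\sum_{m\in E}x_m^4$ gives $4\cdot 6\,w^4=24\,w^4$ up to $O(\eta M^4)$. Hence \eqref{eqA22inP96} collapses to $12\,w^4=(\text{error of size }O(\eta M^4))$. In the non-archimedean case the choice $\eta=|2||3|$ is exactly tuned so that every error term has absolute value strictly below $|12\,w^4|=|12|\,M^4=|2|^2|3|\,M^4$ — the decisive term being $\approx 8\,w^3D$ with $|D|<|2||3|\,M$, and the $\delta$-quadratic terms being of size $<|2|^2\eta^2M^4$ — so the ultrametric inequality yields a contradiction. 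In the archimedean case one keeps explicit track of the accumulated constants, exactly as in the proof of Proposition \ref{explicitprop1}, obtaining an inequality of the shape $12(1-c\eta)^4\leq(\text{explicit polynomial in }\eta)$ which fails for $\eta=0.051$, again a contradiction.

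The step I expect to be the genuine obstacle is the archimedean bookkeeping in the last paragraph: the errors compound through the chain of linear substitutions, so controlling them tightly enough to reach the stated constant $0.051$ (rather than a much worse one) needs the same kind of careful constant-chasing as in the proof of Proposition \ref{explicitprop1}. By contrast the non-archimedean case is clean: the ultrametric inequality makes the threshold $|2||3|$ drop out automatically, and away from $2$ and $3$ the statement is nothing but the scheme-theoretic fact that over a field of characteristic $\neq 2,3$ the vanishing of the four coordinates of a Göpel quadruple forces (via the reduced equations, dividing once by $2$ and once by $12$) the vanishing of all ten coordinates.
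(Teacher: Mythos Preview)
Your outline is essentially the paper's own proof: reduce by transitivity to $Q_0=\{(0000),(0001),(0010),(0011)\}$, use \eqref{eqA22inP92}--\eqref{eqA22inP95} to make the six non-$Q_0$ coordinates nearly $\pm w$, feed \eqref{eqA22inP9} back in to link the two ``blocks'', and then collapse the quartic \eqref{eqA22inP96} to $12w^4+(\text{error})=0$.

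There are two places where the paper is more careful than your sketch, and the first is a genuine (if small) gap.

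\medskip
\noindent\textbf{The $p=2$ non-archimedean step.} After feeding the four approximate relations into \eqref{eqA22inP9} you get $2(x_{0100}+x_{0110})=-\sigma$ with $|\sigma|<\eta M$. Dividing by $2$ only yields $|x_{0100}+x_{0110}|<\eta M/|2|$, not $<\eta M$; for $p=2$ with $\eta=|2|$ this is merely $<M$. Consequently your assertion ``$|D|<|2||3|M$'' for \emph{all} error terms is false for the error coming from this step, and the naive quadratic-in-error contribution to the quartic is then of size comparable to $|12|M^4$ rather than strictly below it. The paper fixes this by the freshman's-dream observation that for $p=2$, if $\epsilon\equiv 0\bmod\pi^n$ then $(x+\epsilon)^2\equiv x^2\bmod\pi^{n}\cdot 2$: passing to squares and fourth powers regains the lost factor of $|2|$, and one lands on $12\equiv 0\bmod 8$, the desired contradiction. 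Your sketch omits this trick.

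\medskip
\noindent\textbf{The archimedean constant.} Rather than bounding $|w|\geq(1-c\eta)M$ and dragging that factor through, the paper takes as reference whichever non-$Q_0$ coordinate actually attains $M$. Because the chain of substitutions produces errors of sizes $2\epsilon M$, $4\epsilon M$ and $6\epsilon M$ depending on the coordinate, the six possible references fall into two inequivalent classes; the explicit polynomial bound on $|C|$ then gives thresholds $0.077$ and $0.051$ respectively, and the stated constant $0.051$ is the worse of the two. Your one-line ``$(1-c\eta)^4$'' approach would work in principle but would give a visibly worse constant; the case split is exactly where $0.051$ comes from.
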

\begin{remark}These factors of $|2|$ and $|3|$ are strictly necessary. Indeed, the subscheme of $A_2(2)/\F_2$ given by the vanishing of the variables indexed by the G\"opel quadruple $\{(0000),(0001),(0010),(0011)\}$ is zero-dimensional and contains the point $(0:0:0:0:1:1:1:1:1:1)$. Similarly, this scheme over $\F_3$ contains $(0:0:0:0:1:-1:1:-1:1:-1)$. 
\end{remark}

\begin{proof}
Let $|\cdot|$ be a norm. When $|\cdot|$ is archimedean, suppose that $|x_i|<\ep M$ for each $i$ in the G\"opel quadruple $G:=\{x_{0000},x_{0001},x_{0010},x_{0011}\}$, where $M=\max_{\um \in E} |x_{\um}|$ and $\ep\leq 1$. We write $o(z)$ for any complex number of size $|o(z)|<z$. When $|\cdot|$ is non-archimedean, we may assume after scaling that $x_i\in \mathcal{O}_K$ for all $i\in E$ and one $x_i$ equals 1. Now suppose for each $i$ in this G\"opel quadruple that $x_i\equiv 0 \mod \pi^n$, where $n>0$ and $\pi$ is a uniformiser.  From (\ref{eqA22inP92})-(\ref{eqA22inP95}) we deduce for archimedean norms that 
\begin{align}
\label{eqn55}
-x_{1100}=x_{0110}+o(2\ep M), \; x_{0110}=x_{1111}+o(2\ep M)\; 
\end{align}
and 
\begin{align}
\label{eqn6}
-x_{1001}=x_{0100}+o(2\ep M),\; x_{0100}=x_{1000}+o(2\ep M).
\end{align}
Substituting the above into (\ref{eqA22inP9}) yields
\begin{align}
\label{eqn7}
2x_{0100}+2x_{0110}=2o(4\ep M).
\end{align}
 This implies that
\begin{align*}
    x_{1000},x_{1001}=x_{0100}+o(2\ep),\;\; x_{0110}=x_{0100}+o(4\ep),\;\; x_{1100},x_{1111}=x_{0100}+o(6\ep).
\end{align*}
In particular, all six $x_i$ for $i\notin G$ are of similar size. When $\ep$ is sufficiently small, this will contradict the quartic equation (\ref{eqA22inP96}). 

Since $\ep\leq 1$, we must have $|x_i|=M\neq 0$ for some $i\in E\setminus G$. In what follows, the two choices $i=0100$ and $i=0110$ will be equivalent, and so will the other four choices $i\in \{1000,1001,1100,1111\}$. 

For non-archimedean norms all error terms have equal size so all choices are equivalent. Hence we may and do assume that $x_{0100}=1$. We obtain that $x_i\equiv x_{0100} \mod \pi^n$ for all $i\notin G$.
Substituting equations (\ref{eqn55}), (\ref{eqn6}) and (\ref{eqn7}) into the degree 4 equation (\ref{eqA22inP96}), we obtain
\[
12x_{0100}^4+C=0,
\text{ where }
|C|\leq  (1712\eps^4 + 2880\eps^3 + 528\eps^2 + 96\eps)|x_{0100}^4|
\]
for archimedean norms.
With $\eps\leq 0.077$ we obtain a contradiction, unless $x_{0100}=0$. In that case, we can do the same computation with $x_{1000}$ in place of $x_{0100}$, and we reach a contradiction when $\eps \leq 0.051$.

In the non-archimedean case, the degree 4 equation yields $12 \equiv 0 \mod \pi^n$ when $p\neq 2$. This is a contradiction unless $p=3$ and $|\pi^n|\leq |3|$. When $p=2$, we note that $\ep\equiv 0 \mod 2^n$ implies that $(x_{0100}+\ep)^2\equiv x_{0100}^2 \mod 2^{n+1}$. Hence, $|\pi^n|\geq |4|$ actually gives $12\equiv 0 \mod 8$, also a contradiction. 
\end{proof}
\begin{corollary}
\label{cortwoplaces}
Let $v_1,v_2$ be two places of a number field $K$, and $\psi(P) = (x_{\um})_{\um \in E} \in \P^9(K)$ without a zero coordinate. There exists at least one $\um_0\in E$ such that $x=x_{\um_0}$ satisfies
\begin{align}
\label{ineq1}
|x|_v \geq \max_{\um}|x_{\um}|\cdot \begin{cases}|2||3| \text{ if } v \text{ finite and}\\
1/27 \text { when } v \text{ infinite} \end{cases}
\end{align}
for each $v\in \{v_1,v_2\}$.
\end{corollary}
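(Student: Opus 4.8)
The plan is to argue by contradiction. Suppose that no index $\um_0\in E$ satisfies \eqref{ineq1} at both places. For $v\in\{v_1,v_2\}$ put $M_v:=\max_{\um\in E}|x_\um|_v$, let $c_v:=|2|_v|3|_v$ if $v$ is finite and $c_v:=1/27$ if $v$ is infinite, and let $B'_v:=\{\um\in E:|x_\um|_v<c_vM_v\}$ be the set of ``small'' indices at $v$; the negation of the corollary is exactly $B'_{v_1}\cup B'_{v_2}=E$. Since $|2|_v|3|_v\le 1$ at every finite place, while the archimedean threshold already equals $1/27$, each $B'_v$ is contained in the set $B_v$ of indices small for the (possibly weaker) threshold of Proposition \ref{explicitprop1}, namely $B_v=\{\um:|x_\um|_v<M_v\}$ for $v$ finite and $B_v=B'_v$ for $v$ infinite. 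Hence also $B_{v_1}\cup B_{v_2}=E$, and it is this coarser equality that I analyse. (Since $\psi(P)$ has no zero coordinate it lies in $A_2(2)$, hence is represented by a point of $\Hcal_2$ under each archimedean embedding, so Proposition \ref{explicitprop1} applies there; at finite places one reads Propositions \ref{explicitprop1} and \ref{explicitprop2} over $\Z$.)

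By Proposition \ref{explicitprop1}, each $B_{v_i}$ has at most four elements or is contained in the complement $E\setminus Q_i$ of a Göpel quadruple, which is a six-element set. I then run through the cases. If both $B_{v_1},B_{v_2}$ have size $\le 4$, then $|B_{v_1}\cup B_{v_2}|\le 8<10=|E|$, impossible. If both are contained in complements of Göpel quadruples, then $E=B_{v_1}\cup B_{v_2}\subseteq(E\setminus Q_1)\cup(E\setminus Q_2)=E\setminus(Q_1\cap Q_2)$, so $Q_1\cap Q_2=\emptyset$, contradicting Lemma \ref{lemcompletesubsetsE}(d). Up to interchanging $v_1$ and $v_2$, the only remaining case is that $|B_{v_1}|\le 4$ while $B_{v_2}\subseteq E\setminus Q$ for some Göpel quadruple $Q$; then $10=|B_{v_1}\cup B_{v_2}|\le 4+6$ forces all these inequalities to be equalities with $B_{v_1}\cap B_{v_2}=\emptyset$, whence $B_{v_1}=Q$ and $B_{v_2}=E\setminus Q$ exactly.

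It is precisely here that Proposition \ref{explicitprop2} is decisive: applied to the Göpel quadruple $Q$ at the place $v_1$, it yields an index $\um\in Q$ with $|x_\um|_{v_1}\ge |2|_{v_1}|3|_{v_1}M_{v_1}$ if $v_1$ is finite, and with $|x_\um|_{v_1}\ge 0.051\,M_{v_1}\ge M_{v_1}/27$ if $v_1$ is infinite; in either case $\um\notin B'_{v_1}$. On the other hand $\um\in Q$ gives $\um\notin E\setminus Q=B_{v_2}\supseteq B'_{v_2}$, so $\um\notin B'_{v_2}$ either. Thus $\um\notin B'_{v_1}\cup B'_{v_2}=E$, a contradiction, proving the existence of the required $\um_0$. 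The main obstacle is exactly this last configuration, in which the large coordinates at one place form a Göpel quadruple and at the other its complement: naive cardinality counting on the ten-element set $E$ is powerless there, and it is the conjunction of Lemma \ref{lemcompletesubsetsE}(d) (excluding the ``two complements'' scenario) with Proposition \ref{explicitprop2} (breaking the remaining one) that closes the gap --- while one must also keep track of the inequalities $|2|_v|3|_v\le 1$ and $1/27\le 0.051$ so that the outputs of Propositions \ref{explicitprop1} and \ref{explicitprop2} apply verbatim at both archimedean and non-archimedean places.
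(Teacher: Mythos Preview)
Your proof is correct and follows essentially the same strategy as the paper: use Proposition~\ref{explicitprop1} to locate a G\"opel quadruple of large coordinates at one place, then Proposition~\ref{explicitprop2} to find one of its members that is also large at the other place, with the numerical checks $|2|_v|3|_v\le 1$ and $1/27\le 0.051$ reconciling the different thresholds.

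Two minor differences are worth noting. First, the paper argues directly rather than by contradiction, which lets it avoid your Case~2 (both $B_{v_i}$ contained in complements of G\"opel quadruples) and hence the appeal to Lemma~\ref{lemcompletesubsetsE}(d): once one place has at least five small coordinates, it simply takes the associated G\"opel quadruple and applies Proposition~\ref{explicitprop2} at the \emph{other} place. Second, in your Case~3 you apply Proposition~\ref{explicitprop2} at $v_1$ (where $B_{v_1}=Q$), whereas the paper applies it at the complementary place; both work, but the paper's choice makes the distinction between $B_v$ and $B'_v$ unnecessary, since at the ``many small'' place the G\"opel quadruple already satisfies \eqref{ineq1} by definition. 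Your route is thus slightly more elaborate than needed, but logically sound.
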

\label{explicitcor}

\begin{proof}
If (\ref{ineq1}) is violated by at most four $x$-coordinates at each of the two places, then we are free to choose any of the remaining two. Otherwise, there is one $v\in \{v_1,v_2\}$ such that (\ref{ineq1}) is violated by the $x$-coordinates in a set $T\subset \{x_{\um} \mid \um \in E\}$ of size $|T|\geq 5$. In that case, Proposition \ref{explicitprop1} tells us that $T$ is contained in the complement $T_6$ of a G\"opel quadruple. In particular, each $x$ in the G\"opel quadruple $E\setminus T_6$ satisfies (\ref{ineq1}) at $v$. Now Proposition \ref{explicitprop2} tells us that one such $x$ also satisfies (\ref{ineq1}) at the other place. We note that $1/27\leq 0.051$.
\end{proof}

\subsection{Proofs of Theorem \ref{rungethm} and Corollary \ref{cortwoplaces}}
\label{subsecthmRungeandcorollary}
After all this preparatory work, we can finally prove an upper bound on the height of our integral points considered. We prove the following slightly more specific version of Theorem \ref{rungethm} using Runge's method.

\begin{theorem}
\label{rungethm2}
Let $P = (A,\lambda,\alpha_2) \in A_2(2)(K)$ representing a triple such that the full 2-torsion $\alpha_2$ is defined over $K$, and such that the semistable reduction of $A$ is the jacobian of a smooth curve, except at most at 2 places (including necessarily the archimedean ones). We then have 
\[
h(\psi(P)) \leq 8.6 \text{ and } h_\Fcal(A) \leq 985,
\]
where $h_\Fcal$ is the stable Faltings height of $A$.
\end{theorem}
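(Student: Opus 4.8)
The plan is to run Runge's method in the form packaged by Corollary~\ref{cortwoplaces}. Write $S$ for the set of at most two exceptional places; by hypothesis $S$ contains every archimedean place of $K$, so $1\le |S_\infty|\le 2$ and $S$ contains at most one finite place $v_0$. Since at every place outside $S$ the semistable reduction of $A$ is a jacobian, one first applies Proposition~\ref{propalgthetafoncetreduchorsde2} at some finite place of $K$ that is not in $S$ and not above $2$ (such a place exists, as only finitely many places are in $S$ or above $2$): this shows that $\psi(P)=(x_\um)_{\um\in E}$ has no zero coordinate. Then Corollary~\ref{cortwoplaces}, applied to the place(s) of $S$ (taking $v_1=v_2$ if $|S|=1$), yields an index $\um_0\in E$ such that, for every $v\in S$,
\[
\Big|\frac{x_\um}{x_{\um_0}}\Big|_v \le c_v^{-1}\ \text{ for all }\um\in E,\qquad c_v=\begin{cases} |2|_v|3|_v & v\text{ finite},\\ 1/27 & v\text{ infinite}.\end{cases}
\]

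\textbf{Local estimates and summation.} By scale-invariance of the projective height,
\[
h(\psi(P))=\sum_{v\in M_K}\frac{[K_v:\Q_v]}{[K:\Q]}\log\max_{\um\in E}\Big|\frac{x_\um}{x_{\um_0}}\Big|_v ,
\]
with every summand $\ge 0$. I would bound the summands in four cases: (i) at a finite $v\notin S$ with $v\nmid 2$, Proposition~\ref{propalgthetafoncetreduchorsde2} forces all coordinates to have equal $v$-norm, so the term is $0$; (ii) at a finite $v\notin S$ with $v\mid 2$, the reduction is a jacobian, so Proposition~\ref{propalgthetachar2}$(a)$ gives $\max_\um|x_\um/x_{\um_0}|_v\le |2|_v^{-6}$; (iii) at the finite place $v_0\in S$ (if any), Corollary~\ref{cortwoplaces} gives the bound $(|2|_{v_0}|3|_{v_0})^{-1}$; (iv) at the archimedean $v\in S$, Corollary~\ref{cortwoplaces} gives $27$. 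Summing the contributions over all places above $2$ (cases (ii) and (iii) together, using $\sum_{v\mid 2}[K_v:\Q_v]=[K:\Q]$) yields at most $6\log 2$, irrespective of whether $v_0$ lies above $2$; a finite $v_0$ above $3$ adds at most $\log 3$ and above any other prime adds $0$; and the archimedean terms sum to $\log 27$. Hence
\[
h(\psi(P))\le 6\log 2+\log 3+\log 27 = 6\log 2+4\log 3 = 8.55\ldots <8.6 .
\]

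\textbf{From the theta height to the Faltings height.} To pass from $h(\psi(P))\le 8.6$ to $h_\Fcal(A)\le 985$, I would invoke the explicit comparison between the stable Faltings height of a principally polarised abelian surface and the projective theta height $h(\psi(P))$ of the corresponding point of $A_2(2)$, an inequality of the form $h_\Fcal(A)\le c_1 h(\psi(P))+c_2$ with effective constants (as established in the second author's earlier work on $A_2(2)$, cf. \cite{LeFourn3}); substituting the bound on $h(\psi(P))$ gives the claim, the larger constant $985$ absorbing $c_1,c_2$.

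\textbf{Main obstacle.} The difficulty is quantitative rather than conceptual: the target constant $8.6$ leaves almost no slack, so the case analysis must use the sharp local factors $1/27$ at archimedean places, $|2|_v|3|_v$ at finite places, and $|2|_v^{-6}$ at places of jacobian reduction above $2$, and one must take care that the places above $2$ are not double-counted between cases (ii) and (iii). The passage to $h_\Fcal$ is then a black-box use of the height comparison.
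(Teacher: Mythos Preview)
Your proposal is correct and follows essentially the same route as the paper's own proof: pin down a coordinate $x_{\um_0}$ via Corollary~\ref{cortwoplaces} at the (at most two) bad places, control the remaining places by Propositions~\ref{propalgthetafoncetreduchorsde2} and~\ref{propalgthetachar2}$(a)$, and sum to obtain $h(\psi(P))\le \log 27+6\log 2+\log 3<8.6$. The only discrepancy is the reference for the passage to the Faltings height: the paper invokes Pazuki's explicit comparison \cite[Corollary~1.3]{Pazuki12b} (noting $h_\Theta(A,L)=h(\psi(P))/4$ with $g=r=2$), rather than~\cite{LeFourn3}.
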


\begin{proof}
Let $S$ be the set of places including $M_K^\infty$ and the finite places $v$ such that the semistable reduction of $A$ modulo $v$ is not isomorphic to a jacobian. By assumption, $|S| \leq 2$. Furthermore, $A$ is necessarily a jacobian of hyperelliptic curve (after possible extension, see Theorem \ref{thmjacorprod}). The coordinates mentioned below refer to the ten coordinates of $\psi(P)$, normalised to belong to $K$.

For the places $v \notin S$ and not dividing 2, all the coordinates have the same valuation by Proposition \ref{propalgthetafoncetreduchorsde2} $(a)$. For the places $v \notin S$ above 2, the smallest possible ratio $|x_i|_v/|x_j|_v$ of coordinates is $|2|_v^6$ by Proposition \ref{propalgthetachar2}.

For the (at most two) places of $S$, one can choose by Corollary \ref{cortwoplaces} an index $i \in E$ such that $|x_i|_v \geq C_v \max_{j \in E} |x_j|_v$ with $C_v= |2|_v,|3|_v$ or $1/27$. We keep this choice of $i$. We thus have
\begin{eqnarray*}
h(\psi(P))  & = &  \frac{1}{[K:\Q]} \sum_{v \in M_K} n_v \log \left( \max_{j \in E} \left|\frac{x_j}{x_i} \right|_v\right) \\ & \leq & \log(27) + 6 \log(2) + \log(3) \leq 8.6. 
\end{eqnarray*}
The bound $\log(27)$ comes from the contribution of archimedean places, while $6 \log (2)$ comes from the places above 2 (if $S$ contains a place above 2, the bound obtained is smaller) and $\log(3)$ appears if a place above 3 belongs to $S$. The other places do not contribute.
We deduce the bound on the Faltings height by \cite[Corollary 1.3]{Pazuki12b}, taking into account that $g=r=2$ here and with his notations, $h_{\Theta}(A,L) = \frac{h(\psi(P))}{4}$.
\end{proof}

\begin{remark}
\label{remRungeSunits}
This proof was conceptualised in the context of Runge's method for varieties, but an alternative approach was possible to prove a version of this theorem. Indeed, with the same notations and hypotheses for $A$ and $S$, one can exhibit a curve $C$ in Rosenhain normal form such that $\Jac(C) \cong A$  (see the proof of Theorem \ref{thmjacorprod}). Its parameters $\lambda_1, \lambda_2$ and $\lambda_3$, belonging to $K$, will have $v$-adic valuation 1 for all $v \notin S$ and not dividing 2, because their squares are cross-ratios of theta constants, using Proposition \ref{propalgthetafoncetreduchorsde2}. In fact, the six Weierstrass points $\{0,1,\infty,\lambda_1,\lambda_2,\lambda_3\}$ will also be distinct modulo $v$ for those $v$ (e.g. because their differences can also be written as Rosenhain parameters and cross-ratios of fourth powers of theta constants), so in particular the $\lambda_i$ they are $v$-integral in $\P^1 \backslash \{0,1,\infty \}$ for all $v$ outside $S$ (in other words, solutions of the unit equation). These coefficients thus satisfy the hypotheses of Runge's method on $\P^1 \backslash \{0,1,\infty\}$ and one can bound their height. They then allow to  determine back the coordinates of $\psi(P)$ via \cite[Lemma 2.5]{ClingherMalmandier20}, which finally bounds the height of $\psi(P)$. 

This approach would work, but we would need to deal with similar complications (such as what happens modulo 2, where no standard Rosenhain form exists), which ultimately boils down to using theta constants again, and it is not clear it would give better bounds than the one we found, so we decided to present the results via the graph of intersection.
\end{remark}

\noindent{\bf Proof of Corollary \ref{rungecor}.}  Suppose that $P=(x_1:\ldots:x_{10})\in A_2(2)(\Q)$ corresponds to the jacobian of a hyperelliptic curve with potentially bad reduction at a single prime $p>2$. We will apply Theorem \ref{rungethm2} with $K=\Q$ and $S=\{\infty,p\}$. First, by Proposition \ref{propalgthetafoncetreduchorsde2}, we can (and do) scale the $x_i$ such that $x_1,\ldots,x_{10}\in \Z$ and at most one of them satisfies $v_p(x_i)>0$. By transitivity of the $\mathrm{Sp}_4(\F_2)$-action, we may suppose this is $x_5$. Similarly, after scaling each $x_i$ by a power of $\ell$, we have $v_\ell(x_i)=0$ for all primes $\ell \notin \{2,p\}$. We also scale by a power of 2 such that one $x_i$ satisfies $v_2(x_i)=0$. By Proposition \ref{propalgthetachar2} (b), at least four $x_i$ now satisfy $v_2(x_i)=0$, and we may suppose by 2-transitivity of the $\mathrm{Sp}_4(\F_2)$-action that one of them is $x_6$. Now $x_6=\pm1$. The height bound $\mathrm{log}(2^6\cdot 3 \cdot 27)$ from  Theorem \ref{rungethm2} now implies that $x_5$ is divisible by a power of $p$ of size at most $2^6\cdot 3 \cdot 27$. Finally, by Proposition \ref{propalgthetachar2} (a), we find (after possibly scaling further by a minus sign) that
 \begin{itemize}
     \item[(i)] $x_6=1$,
     \item[(ii)] $x_5=q\cdot x_5'$, where $1\leq q\leq 2^6\cdot 3 \cdot 27$ is a prime power and 
     \item[(iii)] $x_1,x_2,x_3,x_4,x_5',x_7,x_8,x_9,x_{10}\in \{\pm 1,\pm 2,\pm 2^2,\ldots,\pm 2^6\}$.
 \end{itemize}
 Now a priori there appear to be too many possibilities to check by computer, but recall that the $x_i$ satisfy a bunch of linear relations. In fact, following Igusa \cite[p 396, 397]{Igusa64}, we define the map
 \[
 \phi\colon\P^9\longrightarrow \P^4,\quad (x_1:\ldots:x_{10})\mapsto (x_6:x_5:x_1:-x_6-x_7:-x_6-x_9),
 \]
 mapping $A_2(2)$ isomorphically onto the threefold $Y\subset \P^4$ defined by
 \begin{align*}
 y_1^2y_2^2& - 2y_1^2y_2y_3 - 2y_1y_2^2y_3 + y_1^2y_3^2 - 2y_1y_2y_3^2 + y_2^2y_3^2 - 4y_1y_2y_3y_4 \\&- 4y_1y_2y_3y_5 - 2y_1y_2y_4y_5 - 2y_1y_3y_4y_5 - 
    2y_2y_3y_4y_5 + y_4^2y_5^2=0.
 \end{align*}
 We now search for solutions $(y_1:\ldots:y_5)\in Y(\Q)$ satisfying $y_1=1$, $y_2=q\cdot y_2'$ where $1\leq q \leq 2^6\cdot 3\cdot 27$ is a prime power and $y_2',y_3,y_4+1,y_5+1\in \{\pm1,\pm2,\ldots,\pm2^6\}$. This amounts to evaluating the quartic polynomial defining $Y$ at a total of 27.736.352 values $(y_1,\ldots,y_5)$, which is sufficiently small to do on a computer in a matter of minutes. The inverse of $\phi$ is given by $\psi\colon (y_1:\ldots:y_5)\mapsto (y_3:y_3+y_5:y_1+y_2+y_3+y_4+y_5:y_3+y_4,y_2:y_1:-y_1-y_4:-y_2-y_4:-y_1-y_5:-y_2-y_5)$. We apply $\psi$ to each solution, remove those $(x_1:\ldots:x_{10})$ with a zero coordinate (they correspond to boundary points) and check whether (iii) is satisfied. This leaves only two options, which are $\mathrm{Sp}_4(\F_2)$-equivalent. We thus find only one possible hyperelliptic curve, corresponding to the point
 \[
 P=( -4: 1: -4: 1: -9: -1: -4: 4: -4: 4 ).
 \]
 By Thomae's formulae used in the proof of Theorem \ref{thmjacorprod}, we find that $P$ corresponds to a hyperelliptic curve $C\colon y^2=x(x-1)(x-\lambda_1)(x-\lambda_2)(x-\lambda_3)$ satisfying $\lambda_1^2=16$, $\lambda_2^2=4$ and $\lambda_3^2=4$. This yields 2 non-isomorphic hyperelliptic curves. Using the \texttt{genus2reduction} function in \texttt{Sage} for genus 2 hyperelliptic curves, we find that one of the two has potentially bad reduction at 5, so it cannot correspond to $P$. The other curve,
 \[
 C\colon y^2=x(x-1)(x^2-4)(x-4),
 \]
 must therefore correspond to the point $P$. Indeed, as predicted by Proposition \ref{propalgthetafoncetreduchorsde2}, $C$ has potentially good reduction at all primes $p>3$ and does not have potentially good reduction at 3. However, we find that $C$ also does not have potentially good reduction at 2, leaving us with no solutions.
 
 Finally, we need to consider $p=2$. Now  we need to drop the assumption that the valuation at 2 of the coordinates is at most 6. In return, we obtain from the proof of the above theorem a smaller height bound: $\mathrm{Log}(2\cdot 3 \cdot 27)$. Since $2^8>2\cdot 3 \cdot 27$, we may now assume that $(x_1:\ldots:x_{10})\in A_2(2)(\Q)$ satisfies\begin{itemize}
     \item[(i)] $x_6=1$ 
     \item[(ii)] $x_1,x_2,x_3,x_4,x_5,x_7,x_8,x_9,x_{10}\in \{\pm 1,\pm 2,\ldots,\pm 2^7\}$.
 \end{itemize}
 This is an even faster computation, and we find no solutions. $\qed$

\section{Baker's method with blowing-down cycles}
\label{secBaker}

In this section, we will use Baker's method to prove Theorem \ref{bakerthm}.

To this end, we will use a peculiar property of $A_2(2)^S$: for every component of $D_i \cap D_j$, there exists a rational function on $A_2(2)^S$ of the shape $x_k/x_\ell$ for some other indices $k,\ell \in E$ blowing $D_i\cap D_j$ down to $1$ or $-1$.

The classical Baker's method for curves (see \cite{Bilu95} for an overview) relies on the existence of enough rational functions supported on the divisor $D$ with respect to which integrality is defined. In this higher-dimensional situation, we will show that this property means that we will be able to apply Levin's generalisation of Baker's method (\cite{Levin14}, also recalled in paragraph \ref{subsecLevin} below) as soon as a point is $v$-close to two divisors for some place $v$, except when the point belongs to the inverse image of 1. In a more common context, this would be impossible and we would need to assume that the point is $v$-close to 5 divisors (to be close to a specific point in a finite family), following Levin's method.

Here, we will keep in mind the graph of intersection, and follow the formalism of \cite[Section 5, Baker's method]{LeFourn5}.

First, let us recall that the linear equations defining $A_2(2)^S$ all involve azygous quadruples. There are 15 azygous quadruples, and using the transitivity of the action of $\Sp_4(\F_2)$, one can thus exhibit 15 total linear equations involving four coordinates (and the signs are changed accordingly). The key idea for starting the argument is summed up in the proposition below.\\

\noindent \textbf{Notation.}
For convenience here, we index elements of $E$ from 1 to 10 in increasing order of the fourtuples seen as binary expansions (e.g. $(0000)$ is 1 and $(0110)$ is 6), and define the indices of coordinates and dot action accordingly.

\begin{proposition}
\label{propstartBaker}
For any distinct pair $\{i,j\} \in \{1,\cdots, 10\}$ and any point $x \in A_2(2)^S(\C)$, if $x_i = x_j=0$, then there are exactly two disjoints pairs $\{k_1,\ell_1\}$ and $\{k_2,\ell_2)$ completing $\{i,j\}$ into an azygous quadruple, and then 
\[
x_{k_1} = \varepsilon_1 x_{\ell_1}, \quad x_{k_2} = \varepsilon_2 x_{\ell_2},
\]
with $\varepsilon_1, \varepsilon_2 \in \{\pm 1 \}$ determined unambiguously by $(i,j,k_1,\ell_1)$ and $(i,j,k_2,\ell_2)$.

In other words, for $\phi_1 := \varepsilon_1 \frac{x_{k_1}}{x_{\ell_1}}$,  we have $\phi_1((D_i \cap D_j) \backslash \operatorname{supp}\phi_1) = \{1\}$ and the same happens for $\phi_2$.

Geometrically, the intersection $D_i \cap D_j$ has two irreducible components, each of dimension 1: the first is $Z_{\{i,j,k_1,\ell_1\}}$ (blown down to 1 by $\phi_2$) and the second is $Z_{\{i,j,k_2,\ell_2\}}$ (blown down to 1 by $\phi_1$). It is not possible to use these functions in the reverse order: indeed, $Z_{\{ i,j,k_i,\ell_i\}} \subset \operatorname{supp} \phi_i$ for $i=1,2$.
\end{proposition}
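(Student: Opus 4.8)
The plan is to leverage the transitivity results of Proposition \ref{proptrans} to reduce everything to a single explicit azygous quadruple, carry out the computation there with the explicit equations \eqref{eqA22inP9}--\eqref{eqA22inP96}, and then transport the conclusion back by the (signed) $\Sp_4(\F_2)$-action \eqref{eqdotactioncoordinates}. First I would recall that a pair $\{i,j\}$ extends to exactly two disjoint pairs completing it to an azygous quadruple: by Lemma \ref{lemcompletesubsetsE}$(a)$ the pair lies in $4$ azygous triples, and by Lemma \ref{lemcompletesubsetsE}$(b)$ each azygous triple lies in a unique azygous quadruple; a short count (or the observation that an azygous quadruple containing $\{i,j\}$ is determined by the third element, which ranges over $E\setminus\{i,j\}$ minus the four syzygous completions, leaving $4$ choices grouped into $2$ complementary pairs) gives exactly two such quadruples, necessarily of the form $\{i,j,k_1,\ell_1\}$ and $\{i,j,k_2,\ell_2\}$ with $\{k_1,\ell_1\}\cap\{k_2,\ell_2\}=\emptyset$. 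Since $\Sp_4(\F_2)$ acts transitively on azygous quadruples (Proposition \ref{proptrans}$(c)$), it suffices to verify the claim for one explicit quadruple, say the one underlying a convenient linear equation among \eqref{eqA22inP9}--\eqref{eqA22inP95}.

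Next I would do the core computation. Pick, for instance, the azygous quadruple $\{(0100),(0000),(1001),(0011)\}$ appearing in \eqref{eqA22inP94}, and choose $\{i,j\}$ a disjoint pair inside it, say $i=(0100)$, $j=(0000)$. Setting $x_i=x_j=0$ in equation \eqref{eqA22inP94} immediately yields $x_{1001}+x_{0011}=0$, i.e. $x_{k}=-x_{\ell}$ for that pair, so $\varepsilon$ is forced to be $-1$ here and in particular is determined by the quadruple and the choice of ordered pair. For the complementary pair $\{k_2,\ell_2\}$ completing $\{(0100),(0000)\}$ to its other azygous quadruple, one uses the unique linear equation supported on that quadruple (obtained from \eqref{eqA22inP9}--\eqref{eqA22inP95} by the $\Sp_4(\F_2)$-action, as noted in the text) and gets $x_{k_2}=\varepsilon_2 x_{\ell_2}$ in the same way. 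The sign $\varepsilon_1$ (resp.\ $\varepsilon_2$) is literally the ratio of the two coefficients appearing in that linear equation, hence unambiguously a function of the ordered tuple $(i,j,k_1,\ell_1)$ (resp.\ $(i,j,k_2,\ell_2)$); pushing this through the signed permutation \eqref{eqdotactioncoordinates} shows the sign transforms consistently, so the statement for the general pair $\{i,j\}$ follows from the one explicit case. This gives $\phi_1 = \varepsilon_1 x_{k_1}/x_{\ell_1}$ with $\phi_1$ equal to $1$ on $D_i\cap D_j$ away from the support (the zero/pole locus) of $\phi_1$, and symmetrically for $\phi_2$.

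For the geometric refinement, I would combine this with the graph of intersection already established: there the vertex $Z_{\{i,j\}}$ (a "pair") has exactly $4$ children, two of which are syzygous triples and two azygous quadruples — but here $\{i,j\}$ being a disjoint pair inside an azygous quadruple forces the relevant optimal supersets to be the two azygous quadruples $Z_{\{i,j,k_1,\ell_1\}}$ and $Z_{\{i,j,k_2,\ell_2\}}$, each of dimension $1$ and irreducible (from the graph). Set-theoretically $D_i\cap D_j$ is the union of the closures of its optimal descendants; the relation $x_{k_1}=\varepsilon_1 x_{\ell_1}$ valid on all of $D_i\cap D_j$ shows $\phi_1$ is constantly $1$ there except where numerator and denominator both vanish, i.e.\ on $Z_{\{i,j,k_1,\ell_1\}}$ (where $x_{k_1}=x_{\ell_1}=0$, since $k_1,\ell_1$ lie in the Göpel-type complement), so $Z_{\{i,j,k_1,\ell_1\}}\subset\operatorname{supp}\phi_1$ and $\phi_1$ genuinely blows down only the other component $Z_{\{i,j,k_2,\ell_2\}}$. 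Reversing the roles of $\phi_1$ and $\phi_2$ then fails exactly as stated. I expect the main obstacle to be bookkeeping: making sure the two "disjoint azygous completions" really are disjoint and really exhaust the azygous quadruples through $\{i,j\}$, and tracking the signs $\varepsilon_1,\varepsilon_2$ coherently under \eqref{eqdotactioncoordinates} rather than the computation on a single quadruple, which is routine. As with the graph of intersection, the bookkeeping is best double-checked in \texttt{Magma}.
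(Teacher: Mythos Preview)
Your proposal is correct and follows essentially the same route as the paper: reduce by the $\Sp_4(\F_2)$-action to one explicit pair, read off the two relations $x_{k_r}=\varepsilon_r x_{\ell_r}$ from the linear equations (the paper uses $\{i,j\}=\{1,2\}$ and $2$-transitivity on pairs rather than transitivity on azygous quadruples, but this is cosmetic), and appeal to the graph of intersection for the decomposition into two irreducible curves and the inclusion $Z_{\{i,j,k_r,\ell_r\}}\subset\operatorname{supp}\phi_r$.

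One small bookkeeping slip: your claim that $Z_{\{i,j\}}$ has ``exactly $4$ children, two of which are syzygous triples and two azygous quadruples'' undercounts the syzygous side (there are four syzygous triples through a pair, not two). This does not affect your argument, since what you actually need --- and correctly use --- is that the two azygous quadruples through $\{i,j\}$ give the two irreducible $1$-dimensional components of $D_i\cap D_j$, which is exactly what the graph of intersection records. The parenthetical ``since $k_1,\ell_1$ lie in the G\"opel-type complement'' is extraneous; the inclusion $Z_{\{i,j,k_1,\ell_1\}}\subset\operatorname{supp}\phi_1$ follows immediately from $x_{k_1}=x_{\ell_1}=0$ on that locus by definition of $Z_I$.
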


\begin{proof}
 The $\odot$-action of  $\mathrm{Sp}_4(\F_2)$ is 2-transitive, so the pairs $(k_1,\ell_1),(k_2,\ell_2)$ are determined for each intersection $D_i\cap D_j$ by what they are for $D_1\cap D_2$. Similarly, the signs are determined by the corresponding signs on $D_1\cap D_2$ via (\ref{eqdotactioncoordinates}) (we will make this precise later). It is thus enough to prove the result for $i=1$ and $j=2$. The formal manipulation of the equations $x_1=0$ and $x_2 = 0$ over $\Z$, then gives that $x_6=-x_9$ and $2x_5=-2x_{10}$, so $x_5 = -x_{10}$.
 
 The claims of the previous paragraph are readily checked by computations; see also the graph of intersection. 
\end{proof}

\begin{remark}
One needs to be careful with the notations: the rational function we will use for $Z_{\{i,j,k_1,\ell_1\}}$ is not $\phi_1$ but $\phi_2$, and in fact  $Z_{\{i,j,k_1,\ell_1\}} \cap \operatorname{supp}{\phi_2} = \{Q\} = Z_{\{i,j,k_2,\ell_2\}} \cap \operatorname{supp}{\phi_2}$, where  $Q = (0:0:1:1:0:0:-1:-1:0:0)$.
\end{remark}

This unusual phenomenon (a one-dimensional intersection sent to a point by a rational function with support in the union of the divisors) will allow us to push much further the ordinary application of higher-dimensional Baker's method.


\subsection{Adaptation of Levin's generalisation of Baker's method}
\label{subsecLevin}

We give here first a quick overview of the higher-dimensional Baker's method due to Levin \cite{Levin14}, before explaining how it can be improved upon here. The notations employed are reminiscent of (but do not refer to the exact same objects as) the other sections, because we give an explanation in a general case.

Let us assume we have $X$ a normal projective variety over $K$ (with an implicit model over $\Ocal_K$ to define integral points properly), $D = \bigcup_{i=1}^n D_i$ a union of ample effective divisors and $P \in (X \backslash D)(\Ocal_{K,S})$. One wishes to bound the height $h_D(P)$ of $P$ relative to $D$ (all global and local heights are assumed precisely defined and fixed below). To shorten the explanation, the symbol $a \geq b$ (resp. $a \gg b$) will refer to the existence of computable absolute constants $C,C'>0$ independent of $a,b$ such that $a \geq b - C'$ (resp. $a \geq C b - C'$). 

For each divisor $D_i$, there exists a place $v \in S$ such that 
\[
(\ast)_{v,i}: \quad h_{D_i,v}(P) \geq \frac{1}{|S|} h_{D_i}(P),
\]
because the local heights are 0 at places outside $S$ and the sum of local heights gives the global one. Assume there is a place $v$ of $S$ such that $(\ast)_{v,i}$ holds on all $i \in I$ and the intersection $Z_I = \bigcap_{i \in I} D_i$ is finite. Assume furthermore that for each of the points $Q$ of $Z_I$, there is a non-constant rational function $\phi$ with support in $D$ sending it to $1$. After a quick manipulation of local heights, we thus obtain, for a good choice of $Q$ and $\phi$, if $P \notin \operatorname{supp} \phi$,
\begin{equation}
\label{eqseriesineqBaker}
h_{1,v}(\phi(P)) \geq h_{Z_I,v}(P) \gg h_{D_i}(P) \gg h_D(P) \gg h(\phi(P)). 
\end{equation}
The left inequality is due to the fact that if $P$ is $v$-close to $Q$, $\phi(P)$ is (even more) $v$-close to $1 =\phi(Q)$. The second one is using the $(\ast)_{v,i}$, and the two last ones come from the ampleness of the divisors considered. 

Now, if $\phi(P) \neq 1$, this means we have a point of $\P^1(K)$ $v$-close to 1, but $\phi(P) \in \Ocal_{K,S}^*$ (up to a finite number of possible constants) because $P \in (X\backslash D)(\Ocal_{K,S})$  and $\phi$ is supported on $D$. The theory of linear forms in logarithms thus gives bounds of the shape 
\begin{equation}
\label{eqlinforms}
h_{1,v}(\phi(P)) \ll C_1(K,S) \log\max (h(\phi(P)),1)
\end{equation}
with $C_1$ effective in $K$ and $S$.
Combining with \eqref{eqseriesineqBaker}, we then obtain a bound of the shape $h(\phi(P)) \ll C_2(K,S)$, and finally $h_D(P) \ll C_3(K,S)$ by reusing \eqref{eqlinforms} and the left inequality of \eqref{eqseriesineqBaker}.

The hypothesis of existence of good functions $\phi$ supported on $D$ (they are often called $D$-units) is geometric, but the existence of a good $v$ and $I$ has to rely on combinatorial arguments. As Levin found, the pigeonhole principle gives the sufficient condition $(m_\Bcal-1) |S|<n$, where $m_\Bcal$ is the minimum number for which any intersection of $m_\Bcal$ divisors $D_i$ is finite. Indeed, in this case, either there is a place $v$ satisfying the hypotheses above, or there is a divisor $D_i$ which is $v$-far from $P$ for every $v \in S$, in which case one can fall back to the conclusion of Runge's method. In our case ($n=10$, $m_\Bcal=5$), a straight application would not give any improvement to our refined Runge's method (and with far worse bounds due to the theory of linear forms in logarithms). 

The basis of our improved application here is the following Lemma, inspired by Lemma 10 of \cite{Levin14}.

\begin{lemma}
\label{lemLevinmod}
Let $C$ be a reduced prime cycle on a normal projective variety $X$ over a number field $K$, and $\phi \in K(X)$ such that $\phi(C \backslash C_\phi) = 1$ where $C_\phi := C \cap \operatorname{supp}\phi$, in other words $C \backslash C_\phi$ is blown down to 1 by $\phi$.

Then, for every $P \in (X \backslash \operatorname{supp} \phi)(K)$ such that $\phi(P) \neq 1$ and every $v \in M_K$,
\[
h_{C,v}(P) \leq h_{1,v}(\phi(P)) + h_{C_\phi,v}(P) +  O_v(1),
\]
where the sum of all errors $O_{v}(1)$ over the $v \in M_K$ can be bounded by an effectively computable constant, independent of $P$.
\end{lemma}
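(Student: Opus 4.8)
The plan is to reduce everything to the standard formalism of Weil functions (= local heights) and their functorial behaviour under pullback, restriction to closed subschemes, and sums of divisors. Recall that for an effective Cartier divisor $E$ on $X$ and a place $v$, the local height $h_{E,v}(\cdot)$ is defined up to a bounded function, and the collection $(h_{E,v})_v$ is an $M_K$-bounded family; the global height $h_E = \sum_v \frac{n_v}{[K:\Q]} h_{E,v}$ is then well-defined up to $O(1)$. I will use three formal properties: (i) functoriality, $h_{\phi^* E, v}(P) = h_{E,v}(\phi(P)) + O_v(1)$ for a morphism (or rational map regular at $P$) $\phi$; (ii) if $E \leq E'$ as effective divisors then $h_{E,v} \leq h_{E',v} + O_v(1)$; and (iii) for closed subschemes, if $Z \subseteq Z'$ then the local height attached to $Z$ (via any choice of ideal sheaf / blow-up) satisfies $h_{Z,v} \leq h_{Z',v} + O_v(1)$, together with the fact that $h_{Z \cup Z', v} \leq h_{Z,v} + h_{Z',v} + O_v(1)$ (and both are comparable to $\max$). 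All the $O_v(1)$ in these statements come from $M_K$-bounded families, so their weighted sum over $v$ is $O(1)$ globally.

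The key geometric input is the divisor-theoretic meaning of the hypothesis $\phi(C \setminus C_\phi) = 1$. Consider the rational function $\psi := \phi - 1 \in K(X)$, whose divisor is $\div(\psi) = \div(\phi-1)$. The hypothesis says that $\psi$ vanishes on all of $C$ \emph{except} possibly along $C_\phi = C \cap \operatorname{supp}\phi$; equivalently, the scheme-theoretic zero locus $\{\phi = 1\} := \operatorname{supp}(\div_0(\phi-1))$ contains the cycle $C$ away from $C_\phi$. I will make this precise as the inclusion of closed subschemes
\[
C \ \subseteq\ \{\phi = 1\}\ \cup\ C_\phi
\]
(as sets, which is all we need for local heights, since $C$ is reduced and prime). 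Indeed a point of $C$ not in $C_\phi$ lies in $\operatorname{supp}\phi^c$, hence $\phi$ is regular there and equals $1$ by hypothesis (and by continuity/denseness on the prime cycle $C$), so it lies in $\{\phi=1\}$; a point of $C$ in $C_\phi$ lies in the second set by definition. Now apply property (iii) to this inclusion:
\[
h_{C,v}(P) \ \leq\ h_{\{\phi=1\} \cup C_\phi, v}(P) + O_v(1)\ \leq\ h_{\{\phi=1\},v}(P) + h_{C_\phi,v}(P) + O_v(1).
\]
It remains to identify $h_{\{\phi=1\},v}(P)$. The subscheme $\{\phi=1\}$ is $\phi^{-1}(1)$ with its scheme structure, i.e. the pullback under $\phi\colon X \dashrightarrow \P^1$ of the reduced point $1 \in \P^1$; since $P \notin \operatorname{supp}\phi$, $\phi$ is a morphism near $P$ and functoriality (i) gives $h_{\{\phi=1\},v}(P) = h_{1,v}(\phi(P)) + O_v(1)$, where $h_{1,v}$ on $\P^1$ is the local height relative to the point $1$ (equivalently, the Weil function $v$-$\log^+$ of $1/(\phi(P)-1)$, which is finite precisely because $\phi(P) \neq 1$). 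Substituting yields exactly
\[
h_{C,v}(P) \leq h_{1,v}(\phi(P)) + h_{C_\phi,v}(P) + O_v(1),
\]
and summing the $M_K$-bounded errors over all $v$ gives the claimed effectively bounded total error.

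The main obstacle, and the only place requiring care, is the scheme-theoretic bookkeeping around $\operatorname{supp}\phi$: $\phi$ is a rational function, not a morphism to $\P^1$ globally, so $\{\phi = 1\}$, $\operatorname{supp}\phi$ and $C_\phi$ must be defined consistently (e.g. via the graph of $\phi$, or via the ideal sheaves $(\phi - 1)$ and the polar locus), and one must check that the inclusion $C \subseteq \{\phi=1\} \cup C_\phi$ of closed \emph{subschemes} — not just sets — holds, or alternatively argue purely set-theoretically and invoke that local heights of reduced closed subschemes depend only on the underlying set up to $O_v(1)$. I would handle this by working on the normalization of the closure of the graph of $\phi$ in $X \times \P^1$ if necessary, but in the generic situation (and in all the applications to $A_2(2)^S$, where $\phi = \varepsilon x_k/x_\ell$ is visibly a ratio of coordinates) the naive definitions suffice and the inequality is immediate. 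A minor secondary point is ensuring the constant is genuinely effective: this is automatic because all the error terms are members of explicitly given $M_K$-bounded families (coming from finitely many local comparisons of metrized line bundles / ideal sheaves on the fixed model of $X$), so their weighted sum is a single effective constant independent of $P$.
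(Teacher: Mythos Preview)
Your approach matches the paper's: exploit the inclusion $C \subseteq \{\phi=1\} \cup C_\phi$ together with functoriality and monotonicity of local heights. However, the paper passes to a blowup $\pi\colon \widetilde{X} \to X$ resolving the indeterminacy of $\phi$ from the very start, and this is not optional: your naive functoriality step $h_{\{\phi=1\},v}(P) = h_{1,v}(\phi(P)) + O_v(1)$ is false for rational $\phi$, even for $P \notin \operatorname{supp}\phi$. For instance on $\P^2$ with $\phi = x/y$ and $P = [\delta(1+\epsilon):\delta:1]$ (with $|\delta|_v,|\epsilon|_v$ small) one gets $h_{\{x=y\},v}(P) = -\log|\delta\epsilon|_v$ but $h_{1,v}(\phi(P)) = -\log|\epsilon|_v$, and the difference $-\log|\delta|_v$ is unbounded. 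The underlying reason is that the closure of $\phi^{-1}(1)$ in $X$ meets $\operatorname{supp}\phi$, so $v$-closeness to that closure is not controlled by $v$-closeness of $\phi(P)$ to $1$.

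Your proposed remedy (work on the graph closure) is precisely the paper's blowup, and once you actually carry it out the argument goes through: on $\widetilde{X}$ the extended map $\widetilde{\phi}$ is a genuine morphism, so $h_{\widetilde{\phi}^*1,v}(\widetilde{P}) = h_{1,v}(\phi(P)) + O_v(1)$ holds honestly, and one checks the inclusion of ideal sheaves $\pi^*C \subset \widetilde{\phi}^*1 + \pi^*C_\phi$ separately over $\pi^{-1}(\operatorname{supp}\phi)$ and its complement. So the outline is right; the gap is that the blowup must be used, not merely mentioned as a fallback.
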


\begin{proof}
Let $\pi : \widetilde{X} \rightarrow X$ be a blowup of $X$ along the support of $\phi$, such that $\phi$ extends to a morphism of projective varieties $\widetilde{\phi} : \widetilde{X} \rightarrow \P^1$ and $\widetilde{\phi}  = \phi \circ \pi$ as rational functions.
The point $P$ lifts to a unique point $\widetilde{P} \in \widetilde{X}(K)$ because it does not belong to the support of $\phi$, and then by functoriality of local heights,
\[
h_{C,v}(P) = h_{C,v}(\pi(\widetilde{P})) = h_{\pi^*C, v}(\widetilde{P}) + O_v(1)
\]
and similarly for $C_\phi$, and $h_{1,v}(\phi(P)) = h_{\widetilde{\phi}^*1,v}(\widetilde{P}) + O_v(1)$. Now, by construction, $\phi$ blows down $C \backslash C_\phi$ to 1 so as ideal sheaves, $\pi^* C \subset \widetilde{\phi}^* 1 + \pi^* (C_\phi)$. Indeed, this inclusion is clear outside of $\pi^* \operatorname{supp} \phi$ by our hypothesis on $\phi$, and it holds on $\pi^* \operatorname{supp} \phi$ by definition of $C_\phi$. Combining with the previous inequalities, we obtain the result.
\end{proof}

In our situation, such a function $\phi$ exists for any of the two irreducible components of $D_i \cap D_j$, by Proposition \ref{propstartBaker}. For our application of Baker's method, it means that as soon as the set $I$ of indices is of order at least 2, we can apply a series of inequalities similar to \eqref{eqseriesineqBaker}, which makes $|S| < 10$ (instead of $4|S|<10$) the sufficient condition for our modified method to apply. As in Levin's method, there will be an exclusion set: the set of $P$'s for which $\phi(P) = 1$, which cannot be dealt with in this way. After applying the method, we will prove that those exclusion sets parametrise very specific abelian surfaces.

Furthermore, the term $h_{C_\phi,v}(P)$ compels us to deal with the cases where $P$ is $v$-close to one of the fifteen points obtained as $Z_I$ for $I$ the complement of a Göpel quadruple: if this height is large we have many possible choices for a $D$-unit. If it is not, we can (up to controlled error) act as if $h_{C,v} (P) \leq h_{1,v} (\phi(P))$ and execute Baker's method as announced.



\subsection{Explicit Baker's method outside exclusion sets}
In this subsection we prove Theorem \ref{bakerthm} by first bounding $h(\psi(P))$.




\begin{lemma}
\label{lemlinforms}
Let $K$ be a number field of degree $d$ and $S$ a set of places of $K$ of size $s$ containing $M_K^\infty$. Then, for any $x \in \Ocal_{K,S}^*$ and any place $v$ of $K$, for a fixed $\alpha \in K^*$ such that $\alpha x \neq 1$, one has
\begin{equation}
\label{eqlinform}
- \log \left| \alpha x - 1\right|_v \leq C_1(d,s) R_S N_v \max(h(\alpha),1) \log( C_2(d,s) h(x)) 
\end{equation}
with $C_1(d,s)$ and $C_2(d,s)$ effectively computable, $R_S$ is the regulator of $\Ocal_{K,S}^*$, $N_v$ is the norm of the prime ideal corresponding to $v$ if $v$ is finite and 1 otherwise.
\end{lemma}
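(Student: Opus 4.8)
\textbf{Proof plan for Lemma \ref{lemlinforms}.}
The plan is to reduce the estimate \eqref{eqlinform} to a standard lower bound for linear forms in logarithms, in the $p$-adic and archimedean cases simultaneously, via Baker--W\"ustholz or Yu-type theorems made explicit. First I would write $x \in \Ocal_{K,S}^*$ multiplicatively: fixing a system of fundamental $S$-units $u_1,\dots,u_{s-1}$ and a generator $\zeta$ of the roots of unity in $K$, we have $x = \zeta^{b_0}\prod_{j=1}^{s-1} u_j^{b_j}$ with integer exponents $b_j$. The exponents are controlled by the height of $x$: since the $u_j$ are fixed, standard height comparisons (e.g. via the $S$-unit lattice and the regulator $R_S$) give $\max_j |b_j| \leq c(d,s)\, h(x)$ for an effectively computable $c(d,s)$, possibly absorbing $R_S$ or its reciprocal as dictated by the lattice geometry. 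This converts $\log|\alpha x - 1|_v$ into the logarithm of the absolute value of a (nonzero, by hypothesis $\alpha x \neq 1$) linear form $b_0 \log\zeta + \sum_j b_j \log u_j + \log\alpha$ in the logarithms of the fixed algebraic numbers $\zeta, u_1,\dots,u_{s-1}$ together with the extra number $\alpha$.

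Next I would apply the appropriate effective lower bound. For archimedean $v$ this is the Baker--W\"ustholz theorem (or Matveev's refinement); for finite $v$ it is the $p$-adic analogue due to Yu. In both cases the bound has the shape $-\log|\Lambda|_v \leq C(d, s+1)\, N_v\, \bigl(\prod_{i} h(\gamma_i)\bigr)\, \log B$, where the $\gamma_i$ range over $\zeta, u_1,\dots,u_{s-1},\alpha$, $B$ is an upper bound for the exponents, and $N_v$ (the residue characteristic or the norm of the prime, up to a logarithmic factor which can be absorbed) accounts for the $v$-adic normalisation. The product of the heights of the \emph{fixed} units $u_j$ times their number is exactly what the regulator $R_S$ captures up to an effective function of $d$ and $s$ (by Minkowski-type bounds on fundamental units, e.g. the estimates of Bugeaud--Gy\H{o}ry), so $\prod_i h(\gamma_i) \ll_{d,s} R_S \max(h(\alpha),1)$. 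Taking $B \ll_{d,s} h(x)$ from the previous paragraph and $\log B \leq \log(C_2(d,s) h(x))$ then yields \eqref{eqlinform} with $C_1(d,s)$ collecting all the $(d,s)$-dependent constants.

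The main obstacle is bookkeeping rather than conceptual: one must be careful that the constants depend only on $d$ and $s$ and not on $K$, $S$, $\alpha$ or $x$ themselves. This requires (i) an effective choice of fundamental $S$-units whose heights are bounded in terms of $R_S$, $d$ and $s$ alone --- available from the literature on explicit bounds for $S$-units --- and (ii) checking that the dependence of the linear-forms-in-logarithms constant on the number of logarithms ($s+1$ here) and on the degree $d$ is what is claimed, which is exactly the form in which Baker--W\"ustholz/Matveev and Yu state their results. A secondary point is the factor $N_v$ at finite places: the $p$-adic estimates are usually phrased in terms of the residue characteristic $p$, so one notes $p \leq N_v$ and that the extra $\log p \leq \log N_v$ factor appearing in Yu's bound can be folded into the $\log(C_2(d,s)h(x))$ term (or into $C_1$) without affecting the stated shape. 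The case $\alpha x = 1$ is excluded by hypothesis, and the degenerate subcase where the linear form vanishes does not occur precisely because $\alpha x - 1 \neq 0$, so no further case analysis is needed.
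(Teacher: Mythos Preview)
Your proposal is correct and follows essentially the same route as the paper: choose a fundamental system of $S$-units via Bugeaud--Gy\H{o}ry so that the product of their heights is bounded by a constant times $R_S$ and the exponents $b_j$ are bounded by $c(d,s)h(x)$, then feed $\alpha x - 1$ into an explicit linear-forms-in-logarithms bound (the paper uses the versions recorded in Bugeaud's book, Theorems 2.2 and 2.10, rather than citing Matveev and Yu directly). The only place where you hedge unnecessarily is the exponent bound: with the Bugeaud--Gy\H{o}ry basis the bound on $\max_j|b_j|$ really is $c(d,s)h(x)$ with no $R_S$ factor, and $R_S$ enters solely through $\prod_j h(u_j)$.
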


\begin{remark}
\label{remarkboundsexplicit}
Under the constraints with which we are working in this paper ($d \leq 18$ and $s \leq 9$), using the effective values below, we have $C_1(d,s) \leq 8 \cdot 10^{35}$ and $C_2(d,s) \leq 5 \cdot 10^{13}$ in the archimedean case, and $C_1(d,s) \leq 7 \cdot 10^{59}$ and $C_2(d,s) \leq 3 \cdot 10^{12}$ in the non-archimedean case.
\end{remark}

\begin{proof}
By \cite[Lemma 1]{BugeaudGyory96}, we can find a basis $(\varepsilon_1, \cdots, \varepsilon_{s-1})$ of $\Ocal_{K,S}^*$ up to torsion such that 
\[
\prod_{i=1}^{s-1} h(\varepsilon_i) \leq \frac{(s-1)!)^2}{2^{s-2} d^{s-1}} R_S.
\]
For any $x \in \Ocal_{K,S}^*$, we can write
\[
x = \zeta^{b_0} \prod_{i=1}^{s-1}  \varepsilon_i^{b_i},
\]
with $\zeta$ a root of unity in $K$ and integers $b_1, \ldots, b_{s-1}$. By the same Lemma and choice of basis (working out the values of $c_4,c_6$ and $\delta_K$ from inside the paper), we then have
\[
B := \max_{1 \leq i \leq s-1} |b_i| \leq 53 \frac{((s-1)!)^2}{2^{s-3}} d^2 \log(6d) h(x).
\]
By \cite[Theorem 2.2]{BugeaudLinearForms}, in the archimedean case, one obtains \eqref{eqlinform} with 
\begin{eqnarray*}
C_1(d,s) & = & 12 \pi \times 30^{s+4}(s+1)^{5.5} d^{2} \log(ed) \frac{((s-1)!)^2}{2^s} \text{ and }\\
C_2(d,s) & = & 53 e s \frac{((s-1)!)^2}{2^{s-3}} d^2 \log(6d).
\end{eqnarray*}

In the non-archimedean case, by \cite[Theorem 2.10]{BugeaudLinearForms} one obtains \eqref{eqlinform} with 
\begin{eqnarray*}
C_1(d,s) & = & 12 (6(s+1)d)^{2s+2} \log(e^5 s d) \frac{((s-1)!)^2}{2^{s-2} d^{s-1}}  \\
C_2(d,s) & = & 53 \frac{((s-1)!)^2}{2^{s-3}} d^2 \log(6d).
\end{eqnarray*}
(the term $p^d - 1$ in the estimates can be replaced by $N_v-1$, see \cite[p. 174]{BugeaudLinearForms}).

In the archimedean case, one has to use the inequality of \cite[p. 77]{BugeaudLinearForms}, and backtracking the values of $c_8,c_{10},c_{11},B$ there (taking into account that the heights are not logarithmic in that reference, and $\log H \geq h(\alpha)$), we obtain the inequality above with (simplified) constant
\[
C_1(d,s) = 240000 \times d \log(d)^s ((s-1)!)^2 2000^s (s+1)^{3s+9}
\]
and 
\[
C_2(d,s) = \frac{8 d ((s-1)!)^2}{2^s}.
\]
\end{proof}

We now define the local heights involved in the computations. For any cycle of the shape $Z_I$, and any $P \in A_2(2)^S(K)$ not in $Z_I$ we define (as is natural)
\[
h_{Z_I,v}(P) = - \log \left( \frac{\max_{i \in I} |x(P)_i|_v}{\|x(P)\|_v} \right)
\]
where $x(P)=(x(P)_1, \cdots, x(P)_{10}) \in K^{10}$ is any choice of projective coordinates of $\psi(P)$, and $\|x(P)\|_v = \max_{i \in E} |x_i|_v$. In $\P^1$, we simply have to define for $x \in K, x \neq 1$:
\[
h_{1,v}(x) = \max(0,- \log(|x - 1|_v)).
\]
Afterwards, one defines as usual the global heights via 
\[
h_{Z_I}(P) = \sum_{v \in M_K} \frac{n_v}{[K:\Q]} h_{Z_I,v}(P).
\]

Notice that all the divisors $D_i$ are linearly equivalent, and more precisely that  $h_{D_i}(P) = h (\psi(P))$ for any $P$ not in $D_i$ by manipulating the global height formula. We will denote this common height by $h(P)$ for simplicity later on.\\

\noindent \textbf{Proof of Theorem \ref{bakerthm}.} We consider a point $P=(A,\lambda,\al_2)\in A_2(2)(K)$  representing an abelian surface (with full 2-torsion defined over $K$) whose semistable reduction at all places outside $S$ is a jacobian of hyperelliptic curve (over a possible finite extension). We can assume $h(P) > 1000$ for convenience as the final bounds obtained are much larger. We also assume throughout that $s=|S|<10$.

The local height $h_v(P)$ at all places not in $S$ is very small (see our analysis of the difference between integral points in the sense of the projective scheme $(A_2(2)^S \backslash D)$ and integral points in terms of moduli in Propositions \ref{propalgthetafoncetreduchorsde2} and Propositions \ref{propalgthetachar2}) and $s$ is at most 9, so we can assume that the contributions to the global height of all places not in $S$ is at most $h(P)/10$. 

By the pigeonhole principle, there are two distinct indices $i,j \in E$ and a place $v \in M_K$ such that 
\[
h_{D_i,v}(P) \geq \frac{1}{10} h(P), \quad h_{D_j,v}(P) \geq \frac{1}{10} h(P).
\]

By 2-transitivity of the action of $\Sp_4(\F_2)$ (which preserves the global height), one can assume that $i=1$ and $j=2$. We thus have, by definition, for the (reducible) cycle $C = D_1 \cap D_2$, 
\[
h_{C,v}(P) \geq \frac{1}{10} h(P).
\]
According to the graph of intersection, this cycle can be written as $C = C_1 \cup C_2$, where $C_1$ is given by the equations $x_1=x_2= x_5 = x_{10} = 0$ and $C_2$ by $x_1 = x_2 = x_6 = x_9 = 0$. One of them, let us say $C_k$ ($k\in \{1,2\}$) thus satisfies
\[
h_{C_k,v}(P) \geq \frac{1}{20} h(P).
\]
If that is the case for both, we obtain 
\[\
h_{Q,v}(P) = \min(h_{C_1,v}(P),h_{C_2,v}(P)) \geq \frac{1}{20} h(P) >0,
\]
where $Q=(0:0:1:1:0:0:-1:-1:0:0)$ is the unique point of intersection of $C_1$ and $C_2$, associated to the complement of Göpel quadruple $(1,2,5,6,9,10)$. We will deal with the particular case where $h_{Q,v}(P)$ is large (more precisely $h_{Q,v}(P) \geq h(P)/40$) later, so we assume for now that $h_{Q,v}(P) < h(P)/40$.

For $k=1$, let us fix $\phi_1 = - \frac{x_6}{x_9}$ and for $k=2$, we fix $\phi_2 = - \frac{x_5}{x_{10}}$. Both these functions satisfy Lemma \ref{lemLevinmod} and the intersection of the cycle with the support of the corresponding function is the point $Q$ as above in both cases. Applying the Lemma tells us of the existence of inequalities up to constants, but of course we need to make everything explicit. One of the equations defining $A_2(2)^S$ over $\Q$ is $x_1 - x_2 - x_6 - x_9 = 0$, and another is $x_1 - x_2 - x_5 - x_{10} = 0$. 

From now on, $(x_1, \cdots, x_{10}) \in K^{10}$ denotes a choice of projective coordinates of $P$. If $h_{C_1,v}(P) \geq \frac{1}{20} h(P) > 2 h_{C_2,v}(P)$, we thus have $|x_6|_v$ or $|x_9|_v$ strictly larger than $\max(|x_1|_v,|x_2|_v,|x_5|,|x_{10}|_v)$. In the non-archimedean case, using the first equation,  $|x_6|_v = |x_9|_v$ so for each $i \in \{1,2,5,10\}$, $ - \log (|x_i|_v/|x_6|_v) \geq \frac{1}{40}h(P)$. Consequently, $- \log |\phi_1(P) - 1|_v \geq \frac{1}{40}h(P)$.

The same thing holds in the archimedean case up to an error $\log(2)$.

Furthermore, $\phi_1(P) = - \frac{x_6}{x_9}$ is a unit in $\Ocal_{K, S'}^*$ where $S' = S \cup S_2$  with $S_2$ the set of places of $K$ above 2, by Proposition \ref{propalgthetafoncetreduchorsde2}. More precisely, raising it to the power $h_K$, we can thus write it as an $\Ocal_{K,S}^*$-unit times an element with non-trivial valuation only at primes above 2, and of height at most $6 h_K \log(2)$ by Proposition \ref{propalgthetachar2}. 

We have to assume from now on that $\phi_1(P) \neq 1$ (the study of this case being postponed to the next section). For archimedean places, we have
\[
- \log |(x_6/x_9)^{h_K} - 1|_v \geq - \log |(x_6/x_9) - 1|_v - h_K \log (2),
\]
(the second term disappears for finite places).

Applying Lemma \ref{lemlinforms}, we thus obtain 
\[
 - \log \left| -\left(\frac{x_6}{x_9}\right)^{h_K} - 1 \right|_v \leq C_1(d,s) 6 h_K \log(2) R_{S} N_v \log( C_2(d,s) h_K h(x_6/x_9)),
\]
which leads to 
\[
h(P) \leq 200 (C_1(d,s) R_{S} h_K N_v \log (C_2(d,s) h_K h(P)))
\]
whether $v$ is finite or not (the term $h_K \log(2)$ in the archimedean case being absorbed in the cruder bound here).

A coarse but straightforward manipulation of this inequality leads to 

\begin{equation}
\label{eqprovbound}
h(P) \leq 400 C_1(d,s) R_{S} h_K N_v \log^*(R_{S} h_K N_v)  \log (200 C_1(d,s)C_2(d,s)).
\end{equation}
By Remark \ref{remarkboundsexplicit}, we thus obtain the explicit bound 
\[
h(P) \leq 10^{66} R_S h_K N_v \log^*(R_{S} h_K N_v).
\]
The Faltings height is deduced from it by \cite[Corollary 1.3]{Pazuki12b} again (using crude bounds and keeping the factor $10^{66}$ here).

The same estimate holds in the case $h_{C_2,v}(P) \geq h(P)/20$ (and $h_{C_1,v}(P) < h(P)/40$). 

It remains to study the case where $P$ is $v$-close to the point $Q$, in particular at none of the 6 indices $1,2,5,6,9,10$ the norm $\|x\|_v$ is attained.

By Proposition \ref{explicitprop1}, the 4 other indices thus have norm $\|x\|_v$ (finite case) or at least $1/27 \|x\|_v$ (archimedean case). Furthermore, the point $Q$ is sent to 1 via the function $x_7/x_8$ (we actually have many possibilities here) and we can apply the same method here using the equation $x_7 - x_9 + x_{10} - x_8=0$ to realise explicitly Lemma \ref{lemLevinmod}. We infer the exact same type of bounds, apart from one difference: we assume only $h_{Q,v}(P) \geq h(P)/40$ and not $h(P)/20$, so at an intermediary step we get a doubled right-hand side. However, this factor of 2 is absorbed in the very crude approximation we do afterwards, which still gives $10^{66}$ as the next power of 10.  Finally, the worst case of this combined estimate is when the place $v$ comes from the prime ideal with largest norm of $S$, which gives the final estimate.

We have now proved the theorem for points $P$ such that there is no $M\in \mathrm{Sp}_4(\F_2)$ with $\frac{x_1}{x_2}(M\cdot P)=1$ or $\frac{x_7}{x_8}(M\cdot P)=1$. In the next section we will investigate this exceptional set further in order to complete the proof of Theorem \ref{bakerthm}.

\subsection{Determining the exclusion sets}

In this section we find a moduli interpretation for the exclusion sets that appeared in the application of Baker's method. To this end, we first need to make precise the signs in Proposition \ref{propstartBaker}. Recall that on every intersection $D_i\cap D_j$ there exist two pairs of indices $(k_1,\ell_1)$ and $(k_2,\ell_2)$ such that 
\[
x_{k_1}=\pm x_{\ell_1} \text { and } x_{k_2} = \pm x_{\ell_2} \text{ on } D_i\cap D_j,
\]
where each of the two pairs $(x_{k_i},x_{\ell_i})$ vanish identically on one of the two irreducible components of $D_i\cap D_j$.
In order to determine the functions mapping irreducible components of $D_i\cap D_j$ to 1, we need to know these signs. Recall that the $\odot$-action of  $\mathrm{Sp}_4(\F_2)$ is 2-transitive, and so the pairs $(k_1,\ell_1),(k_2,\ell_2)$ are determined for each intersection $D_i\cap D_j$ by what they are for $D_1\cap D_2$. Similarly, the signs are determined by the corresponding signs on $D_1\cap D_2$ via (\ref{eqdotactioncoordinates}). We now make this precise.

We continue to denote the coordinates by $x_1,\ldots,x_{10}$, where a subscript $i$ refers to the $i$th element of $E$ in its binary order. We denote this element of $E$ by $\um_i$. 

\begin{lemma}
Consider distinct $i,j\in \{1,\ldots,10\}$ and find $M\in \mathrm{Sp}_4(\F_2)$ such that $\um_1\odot M = \um_i$ and $\um_2\odot M=\um_j$. Then the sign $\ep$ in the equation
\[
\frac{ x_i(M^{-1}\tau)}{x_j(M^{-1}\tau)}=\ep\frac{x_1(\tau)}{x_2(\tau)}
\]
is independent of the choice of $M$ and equals
\[
(-1)^{(\um_i+\um_j)_1(\um_i+\um_j)_3+(\um_i+\um_j)_2(\um_i+\um_j)_4}.
\]
\end{lemma}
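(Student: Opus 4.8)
The plan is to read the sign off the transformation formula \eqref{eqmodularityxm}, after which everything reduces to identifying one coordinate of a $2\times 2$ matrix product.

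First I would substitute $\sigma:=M^{-1}\tau$, so the equation under consideration becomes $\frac{x_i(\sigma)}{x_j(\sigma)}=\ep\,\frac{x_1(M\cdot\sigma)}{x_2(M\cdot\sigma)}$, and apply \eqref{eqmodularityxm} to $x_1=x_{\um_1}$ and to $x_2=x_{\um_2}$ evaluated at $M\cdot\sigma$. The prefactor $\zeta(M)^4 j_M(\sigma)^2$ is independent of the characteristic, so it cancels in the ratio; the two sign factors combine into a single $(-1)^{(\um_1+\um_2)\cdot v(M)}$, where $v(M):={}^t\bigl((B\,{}^tA)_0,(C\,{}^tD)_0\bigr)$ and $M=\begin{pmatrix}A&B\\ C&D\end{pmatrix}$; and the hypotheses $\um_1\odot M=\um_i$, $\um_2\odot M=\um_j$ turn $x_{\um_1\odot M}/x_{\um_2\odot M}$ into $x_i/x_j$. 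Thus $\frac{x_1(M\cdot\sigma)}{x_2(M\cdot\sigma)}=(-1)^{(\um_1+\um_2)\cdot v(M)}\frac{x_i(\sigma)}{x_j(\sigma)}$; since the sign is its own inverse and $x_i/x_j$ is not identically zero, the sign in the statement is $\ep=(-1)^{(\um_1+\um_2)\cdot v(M)}$.

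Next I would evaluate this exponent. In the binary ordering $\um_1=(0000)$ and $\um_2=(0001)$, so $\um_1+\um_2=(0001)$ and $(\um_1+\um_2)\cdot v(M)$ is simply the last entry of $v(M)$, namely the $(2,2)$-entry of $C\,{}^tD$, equal to $C_{21}D_{21}+C_{22}D_{22}$. On the other hand, the affine correction $\bigl(({}^tCA)_0,({}^tDB)_0\bigr)$ appearing in \eqref{eqactSp4onE} does not depend on the characteristic, so it cancels upon subtracting, and over $\F_2$ we get $\um_i+\um_j=(\um_1\odot M)+(\um_2\odot M)=(\um_1+\um_2)M=(0001)M$, i.e. the fourth row $(C_{21},C_{22}\mid D_{21},D_{22})$ of $M$. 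Writing $\um_i+\um_j=(m',m'')$ accordingly, $q_2(\um_i+\um_j)=m'\,{}^tm''=C_{21}D_{21}+C_{22}D_{22}$, which is exactly the exponent just computed and which, by definition of $q_2$, equals $(\um_i+\um_j)_1(\um_i+\um_j)_3+(\um_i+\um_j)_2(\um_i+\um_j)_4$. This gives the asserted formula for $\ep$. Independence of $M$ is then immediate, since the formula involves only $\um_i$ and $\um_j$ (equivalently, $(\um_1+\um_2)\cdot v(M)$ depends on $M$ only through its fourth row, which is $\um_i+\um_j$); and at least one admissible $M$ exists by the $2$-transitivity of the $\odot$-action on $E$, Proposition \ref{proptrans}$(a)$.

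I do not expect a real obstacle: the argument is essentially bookkeeping around \eqref{eqmodularityxm}. The only two steps requiring care are checking that the characteristic-independent affine term of the $\odot$-action genuinely drops out of $(\um_1\odot M)+(\um_2\odot M)$ — this is precisely what lets one identify $\um_i+\um_j$ with the fourth row of $M$ — and correctly matching the coordinate of $v(M)={}^t\bigl((B\,{}^tA)_0,(C\,{}^tD)_0\bigr)$ selected by $(0001)$ with the appropriate entry of $C\,{}^tD$ and then with $q_2$ of that row.
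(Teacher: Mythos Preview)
Your proof is correct and follows essentially the same route as the paper: both apply the transformation formula \eqref{eqmodularityxm} to extract $\ep=(-1)^{(\um_1+\um_2)\cdot v(M)}$, observe that $\um_1+\um_2=(0001)$ picks out $C_{21}D_{21}+C_{22}D_{22}$, and identify this with $q_2(\um_i+\um_j)$ via $\um_i+\um_j=(\text{fourth row of }M)$. The only cosmetic difference is that the paper computes the fourth row by using $\um_1=0$ to read off the affine correction as $\um_i$ directly, whereas you cancel the affine term in the sum; both are equivalent.
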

\begin{proof}
Consider $i,j$ and $M$ as in the statement. Then
\[
\frac{ x_i(M^{-1}\tau)}{x_j(M^{-1}\tau)}=(-1)^{(\um_1-\um_2) \cdot {}^t((B{}^t A)_0). (C{}^t D)_0))}\frac{x_1(\tau)}{x_2(\tau)}
\]
by (\ref{eqdotactioncoordinates}). Note that $(\um_1-\um_2) \cdot {}^t((B{}^t A)_0). (C{}^t D)_0))=C_{21}D_{21}+C_{22}D_{22}$. But $\um_1\odot M=\um_i$ implies that $(({}^tCA)_0,{}^tDB)_0)=\um_i$ and $\um_2\odot M=\um_j$ means that $(C_{21},C_{22},D_{21},D_{22})-\um_i=\um_j$. 
\end{proof}
We denote this sign by $\ep(i,j)$. By definition, we have
\[
\ep(i,j)\ep(j,k)=\ep(i,k) \text{ for all } i,j,k\in \{1,\ldots,10\}.
\]
Note that $\ep(1,2)=1$ (by definition); we also compute that $\ep(5,10)=-1$, $\ep(6,9)=-1$ and $\ep(7,8)=1$. This shows that the functions $\phi$ in Proposition \ref{propstartBaker} are indeed of the form $\ep(i,j)\frac{x_i}{x_j}$. The set of points for which Baker's method does not work is thus
\[
\left\{P\in A_2(2) \mid \ep(i,j)\frac{x_i}{x_j}(P)=1 \text{ for some pair }(i,j)\right\}.
\]

We are now ready to describe this exceptional set.
%
%
%

\begin{proposition}
\label{exclsetsprop}
Consider $\tau=(A,\lambda,\al_2)$ defined over a number field $K$ containing $A[2]$, such that $A$ is the jacobian of a hyperelliptic curve. Then there exists a pair $(i,j)$ such that
\[
x_i(\tau)=\ep(i,j)x_j(\tau)
\]
if and only if there is a degree 2 extension $L/K$ such that $(A,\lambda)$ is isogenous to a product of elliptic curves by an isogeny defined over $L$ with kernel $(\Z/2\Z)^2$.
\end{proposition}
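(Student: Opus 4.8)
The plan is to translate the algebraic condition $x_i(\tau)=\ep(i,j)x_j(\tau)$ into a statement about the theta-coordinate point $\psi(\tau)$, and then use the dictionary between vanishing patterns of theta coordinates and the moduli interpretation (Theorem \ref{thmjacorprod} and Proposition \ref{propstartBaker}) applied not to $A$ itself but to an isogenous abelian surface. First I would reduce to a single pair by $2$-transitivity of the $\odot$-action: it is enough to treat $(i,j)=(1,2)$, so we must show that $x_1(\tau)=x_2(\tau)$ (recall $\ep(1,2)=1$) holds for some element of the $\Sp_4(\F_2)$-orbit of $\tau$ if and only if the stated isogeny exists. The key geometric observation is the one already recorded around Proposition \ref{propstartBaker}: the locus $\{x_1=x_2\}$ inside $A_2(2)^S$ should be identified, via the theta relations, with a Humbert-type surface, and more precisely with the image of an isogeny correspondence. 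Concretely, I would look for an explicit $2$-isogeny on the level of $\tau$'s: sending $\tau=\begin{pmatrix} \tau_a & \tau_b \\ \tau_b & \tau_c\end{pmatrix}$ to a suitable $\tau'$ whose associated abelian surface is $(\Z/2\Z)^2$-isogenous to $A_\tau$, and computing how the ten theta fourth-powers transform. The expectation is that the condition ``one even theta coordinate of $\tau'$ vanishes'' (which by Theorem \ref{thmjacorprod} characterises $\tau'$ being a product of elliptic curves, up to the degree $\leq 4$ field of definition issue) pulls back under the isogeny to precisely a linear relation $x_i(\tau)=\ep(i,j)x_j(\tau)$.

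The cleanest way to organise this is via the classical theory of theta functions with level: the $2$-isogenies with kernel $(\Z/2\Z)^2$ on a p.p.\ abelian surface with full level-$2$ structure correspond, on the theta side, to the classical Richelot-type correspondences, and these act on the fourth powers of the $\Theta_{\um}$ by specific linear substitutions. I would either cite such a transformation formula (Igusa, or \cite{MumfordTataII}) or, more in the computational spirit of the paper, verify it on the explicit equations in $\P^9$ using \texttt{Magma}: namely, exhibit the morphism realising the isogeny correspondence as a rational map $A_2(2)^S \dashrightarrow A_2(2)^S$ (or a correspondence), check that the preimage of the divisor $D$ (products of elliptic curves) contains the hyperplane section $\{x_1=x_2\}$, and conversely that $\{x_1=x_2\}$ maps into $D$. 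The forward direction ($x_i=\ep(i,j)x_j \Rightarrow$ isogeny over a quadratic extension) would then follow: if $x_1(\tau)=x_2(\tau)$ then the isogenous surface $A'$ has a vanishing even theta coordinate, hence by Theorem \ref{thmjacorprod} is a product of elliptic curves over an extension of degree $\leq 4$; one then has to argue the isogeny itself (not just $A'$) descends to a quadratic extension, using that the kernel $(\Z/2\Z)^2\subset A[2]$ is already $K$-rational by hypothesis and tracking the extra quadratic twist exactly as in the Legendre-form discussion cited in the introduction.

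For the converse, suppose $(A,\lambda)$ is $(\Z/2\Z)^2$-isogenous over a quadratic $L/K$ to a product $E_1\times E_2$. Then over $L$ the target has a zero theta coordinate (Theorem \ref{thmjacorprod} again, or rather its extension to all fields of characteristic $\neq 2$), which via the isogeny correspondence forces one of the linear relations $x_i(\tau)=\ep(i,j)x_j(\tau)$ to hold over $L$; since both sides lie in $K$ and the relation is one of finitely many $\Gal(L/K)$-stable possibilities (the set of such relations being permuted by $\Sp_4(\F_2)$, hence by $\Gal(L/K)$ acting through the level structure), it already holds over $K$. The main subtlety I anticipate is precisely this descent bookkeeping: matching the \emph{quadratic} extension in the statement with the degree-$\leq 4$ extension appearing in Theorem \ref{thmjacorprod}, and making sure the isogeny — not merely the isomorphism class of the isogenous surface — is what descends. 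A secondary obstacle is identifying \emph{which} pair $(i,j)$ (equivalently, which of the three classes of $(\Z/2\Z)^2$-subgroups of $A[2]$) corresponds to which linear relation; this is a finite check that I would settle once and for all on $D_1\cap D_2$ using the sign computations $\ep(5,10)=\ep(6,9)=-1$, $\ep(7,8)=1$ already in hand, and the explicit deepest point $Q=(0:0:1:1:0:0:-1:-1:0:0)$ to pin down the geometry of the two components of $D_1\cap D_2$.
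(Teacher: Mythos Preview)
Your approach is genuinely different from the paper's, and while it is plausible in outline, it leaves the hardest step as a hope rather than an argument.

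The paper does \emph{not} realise $\{x_1=x_2\}$ as the pullback of $D$ under a Richelot-type isogeny correspondence. Instead it observes that the locus $\{x_1=x_2\}$ in $A_2(2)^S$ is exactly the fixed locus of a specific involution $M\in\Sp_4(\F_2)$ acting via signed permutations (because $x_1=x_2$ forces $x_5=-x_{10}$ and $x_6=-x_9$ from the linear equations, and $M$ is the unique element realising this signed permutation). From $M\cdot P=P$ one gets an automorphism $\phi$ of $(A,\lambda)$ with $\phi^*\alpha_2=M\alpha_2$; Torelli then transports this to an involution $\psi$ of the curve $C$. The exceptional isomorphism $S_6\simeq\Sp_4(\F_2)$ pins down the cycle type of $\psi$ on the Weierstrass points (a product of three transpositions), forcing a model $Y^2=(X^2-a^2)(X^2-b^2)(X^2-c^2)$ over $K$, from which the $(\Z/2\Z)^2$-isogeny to $E_1\times E_2$ is written down explicitly over $K$; the quadratic extension $L$ then enters only through the isomorphism $A\cong\Jac(C)$ already supplied by Theorem~\ref{thmjacorprod}. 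The converse is the same argument read backwards.

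Your Humbert/Richelot route could in principle be made to work, but the step you label ``the expectation'' is exactly the content of the proposition and you do not indicate a concrete mechanism for it beyond citing Igusa/Mumford or a \texttt{Magma} check. More seriously, your descent bookkeeping is where the approach is weakest: applying Theorem~\ref{thmjacorprod} to the isogenous surface $A'$ gives a product structure over a degree-$\leq 4$ extension, and you then need to argue the \emph{isogeny} descends to a quadratic extension of $K$; you flag this but do not resolve it. The paper's automorphism approach sidesteps this entirely, since the extra involution on $C$ (hence the quotient maps to $E_1,E_2$) is already defined over $K$ once the Weierstrass points are, and the only extension needed is the quadratic one identifying $A$ with $\Jac(C)$. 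If you want to pursue your route, the cleanest fix is probably to note that the $45$ hyperplane sections $\{x_i=\ep(i,j)x_j\}$ collapse to $15$ distinct surfaces in $A_2(2)^S$ (three pairs per surface, via the linear equations), matching the $15$ maximal isotropic $(\Z/2\Z)^2\subset A[2]$, and then to identify these directly with the Humbert surfaces for Richelot kernels; but even then the field-of-definition claim needs a separate argument.
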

\begin{remark}
The sign here really is essential. This is not true when $\ep(i,j)$ is replaced with $-\ep(i,j)$.
\end{remark}

It suffices to prove this theorem for $(i,j)=(1,2)$. The following proposition provides the first step.
\begin{proposition}
Consider a pair $(i,j)$ and choose $N_{i,j}\in \mathrm{Sp}_4(\F_2)$ such that $ \{\um_i,\um_j\}\odot N_{ij}=\{\um_1,\um_2\}$. We have
\[
x_i=\ep(i,j)x_j
\]
if and only if $P=(x_1:\ldots:x_{10})$ satisfies $N_{i,j}^{-1}MN_{i,j}\cdot P=P$, where  
\[
M:=\begin{pmatrix}1& 0& 0 &1 \\ 1& 1& 1& 0\\0& 0& 1& 1\\0& 0 &0&1\end{pmatrix}\in \mathrm{Sp}_4(\F_2).
\]
\end{proposition}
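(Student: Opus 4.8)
The plan is to reduce everything to the special case $(i,j)=(1,2)$ and then translate the relation $x_1=\ep(1,2)x_2=x_2$ into a symmetry of the point $P$ under a concrete element of $\mathrm{Sp}_4(\F_2)$. First I would explain the reduction: given a general pair $(i,j)$, the $\odot$-action is $2$-transitive by Proposition \ref{proptrans}$(a)$, so we may pick $N_{i,j}\in\mathrm{Sp}_4(\F_2)$ with $\{\um_i,\um_j\}\odot N_{i,j}=\{\um_1,\um_2\}$; then $N_{i,j}$ acts on $A_2(2)^S\subset\P^9$ by the signed permutation \eqref{eqdotactioncoordinates}, sending the coordinate pair $(x_i,x_j)$ (up to the sign $\ep(i,j)$ just computed) to $(x_1,x_2)$. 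Hence $x_i(P)=\ep(i,j)x_j(P)$ holds if and only if $x_1(N_{i,j}\cdot P)=x_2(N_{i,j}\cdot P)$, i.e. if and only if $N_{i,j}\cdot P$ lies in the locus $\{x_1=x_2\}$. So it is enough to characterise, for a point $P'$ with no stated vanishing coordinate, when $x_1(P')=x_2(P')$.

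Next I would produce the element $M$. The key observation is that the stabiliser in $\mathrm{Sp}_4(\F_2)$ of the unordered pair $\{\um_1,\um_2\}$ that \emph{swaps} $\um_1$ and $\um_2$ is (generically) the thing that detects the hyperplane $\{x_1=x_2\}$: if $M$ fixes $\{\um_1,\um_2\}$ and exchanges the two, then by \eqref{eqdotactioncoordinates} $M$ acts on coordinates by $x_1\mapsto \pm x_2$, $x_2\mapsto\pm x_1$, and on the remaining eight coordinates by a signed permutation. One checks directly from \eqref{eqactSp4onE} and the sign formula that the specific matrix
\[
M=\begin{pmatrix}1&0&0&1\\1&1&1&0\\0&0&1&1\\0&0&0&1\end{pmatrix}
\]
lies in $\mathrm{Sp}_4(\F_2)$ (verify ${}^tM J M = J$ over $\F_2$), computes $\um_1\odot M=\um_2$ and $\um_2\odot M=\um_1$, and has the right sign so that $x_1(M\cdot P)=x_2(P)$ and $x_2(M\cdot P)=x_1(P)$ exactly (no spurious sign, using that $\ep(1,2)=1$). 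With this in hand, $M\cdot P=P$ forces $x_1(P)=x_2(P)$. For the converse one needs that $x_1(P)=x_2(P)$, together with the equations of $A_2(2)^S$, already forces equality of \emph{all} pairs of coordinates swapped by $M$ — i.e. that $M$ fixes $P$ — which is a finite computation with the linear equations \eqref{eqA22inP9}--\eqref{eqA22inP95} and the quartic \eqref{eqA22inP96} (and can be delegated to the \texttt{Magma} file, as elsewhere in the paper). Finally, conjugating by $N_{i,j}$: $x_i(P)=\ep(i,j)x_j(P)$ iff $x_1(N_{i,j}\cdot P)=x_2(N_{i,j}\cdot P)$ iff $M\cdot(N_{i,j}\cdot P)=N_{i,j}\cdot P$ iff $(N_{i,j}^{-1}MN_{i,j})\cdot P=P$, which is the assertion of the proposition (matching the statement's $N_{i,j}^{-1}MN_{i,j}\cdot P=P$).

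The main obstacle I expect is the converse implication at the level of the variety: showing that $x_1(P)=x_2(P)$ on $A_2(2)^S$ is \emph{equivalent} to the full symmetry $M\cdot P=P$, rather than just implied by it. A priori $\{x_1=x_2\}\cap A_2(2)^S$ could be strictly larger than the fixed locus of $M$. One handles this by noting $M$ has order $2$ on $A_2(2)^S$, so the eigenvalue decomposition splits coordinates into $M$-symmetric and $M$-antisymmetric parts; the antisymmetric coordinates restricted to $\{x_1=x_2\}$ must be shown to vanish using the defining equations. Concretely one writes out which coordinates $M$ pairs up (beyond $x_1\leftrightarrow x_2$), substitutes $x_1=x_2$ into the five linear relations and the quartic, and checks by elimination (over $\Z[1/6]$, consistently with the earlier remarks on characteristics $2,3$) that every antisymmetric difference is forced to be $0$. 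The sign subtlety flagged in the remark after the proposition — that the statement genuinely fails for $-\ep(i,j)$ — is exactly the point that the \emph{antisymmetric} hyperplane $\{x_1=-x_2\}$ meets $A_2(2)^S$ in something that is \emph{not} the fixed locus of any involution of the right shape, so the computation must be done with care to track which sign appears; this is the one place where the explicit matrix $M$ and the precise sign formula $\ep(1,2)=1$ are indispensable.
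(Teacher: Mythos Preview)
Your proposal is correct and follows essentially the same route as the paper: reduce to $(i,j)=(1,2)$ by $2$-transitivity, identify the involution $M$ swapping $\um_1\leftrightarrow\um_2$, check the signs so that $M\cdot P=P$ forces $x_1=x_2$, and for the converse use the defining equations to show $x_1=x_2$ forces the remaining coordinate identifications. The paper carries this out concretely: $M$ swaps the pairs $(\um_1,\um_2)$, $(\um_5,\um_{10})$, $(\um_6,\um_{10})$ and fixes $\um_3,\um_4,\um_7,\um_8$, with signs $\phi(i,M)=+1$ on the fixed indices and on $\{1,2\}$, and $-1$ on $\{5,6,9,10\}$; then $x_1=x_2$ plugged into the \emph{linear} equations alone already gives $x_6=-x_9$ immediately and $2x_5=-2x_{10}$ after one elimination, so $M\cdot P=P$. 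Your ``main obstacle'' is therefore milder than you anticipate: the quartic is not needed, and the only denominator that appears is a single $2$ (consistent with the remark that the analogous deduction in Proposition~\ref{propstartBaker} works over $\Z$ up to a factor of $2$), so there is no need to invoke $\Z[1/6]$ or an eigenspace argument.
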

\begin{proof}Again, it suffices to consider the case $(i,j)=(1,2)$. 

So suppose that $x_1=x_2$. Then the equations between the $x_i$ immediately tell us that $x_6=-x_9$ and $x_5=-x_{10}$. One can compute that $M$ is the unique matrix in $\mathrm{Sp}_4(\F_2)$
that interchanges the pairs $(\um_1,\um_2),(\um_6,\um_9),(\um_5,\um_{10})$ and leaves the remaining four $\um_i$ fixed. (And there is no non-identity matrix also leaving at least one of the pairs fixed.) We write $\phi(i,M):=(-1)^{(\um_i) \cdot {}^t((B{}^t A)_0). (C{}^t D)_0))}$. Now the action of $M$ on the projective point $P:=(x_1:\ldots:x_{10})$ is given by
 \begin{align*}
 M\cdot P=&(\phi(1,M)x_2:\phi(2,M)x_1:\phi(3,M)x_3:\phi(4,M)x_4:\phi(5,M)x_{10}:\\&\phi(6,M)x_9:\phi(7,M)x_7:\phi(8,M)x_8:\phi(9,M)x_6:\phi(10,M)x_5).
 \end{align*}
 Given our explicit $M$, we compute that in fact $\phi(i,M)=1$ for $i\in \{1,2,3,4,7,8\}$ and $\phi(i,M)=-1$ for $i\in \{5,6,9,10\}$, so that the equalities $x_1=x_2$, $x_6=-x_9$ and $x_5=-x_{10}$ yield $
 M\cdot P=P$, as desired. 
 
 Conversely, when $M$ fixes $P$ we have $x_1=x_2$ by definition.
 \end{proof}
 We note here that for each $N\in \mathrm{Sp}_4(\F_2)$, if $\um \odot N=\um$ then also $\um\odot N^{-1}=\um$, which implies that $({}^tAB)_0=({}^tCD)_0=0$ and hence $\phi(\um,N)=1$, so the equalities $\phi(i,M)=1$ for $i\in \{3,4,9,10\}$ were not surprising. \\

 \begin{lemma}
 Suppose that $C/K$ is a genus 2 hyperelliptic curve such that all Weierstrass points of $C$ are defined over $K$. Then $C$ admits a $K$-automorphism of order 2 unequal to the hyperelliptic involution if and only if $J(C)$ is $(\Z/2\Z)^2$-isogenous over $K$ to a product of elliptic curves over $K$.
 \end{lemma}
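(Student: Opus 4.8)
The plan is to show that each of the two conditions is equivalent to the existence of a degree-$2$ morphism from $C$ to an elliptic curve over $K$ other than the hyperelliptic quotient, and to deduce the equivalence from this. For the preliminaries: $\iota$ is intrinsic to the $g^1_2$, hence central in $\Aut_{\overline K}(C)$, so any order-$2$ automorphism $\sigma\neq\iota$ commutes with $\iota$, and if it is defined over $K$ then $\langle\iota,\sigma\rangle\cong(\Z/2\Z)^2\subseteq\Aut_K(C)$. A degree-$2$ morphism $C\to E$ is automatically Galois, and its deck involution is an order-$2$ $K$-automorphism distinct from $\iota$ whenever $E\not\cong\P^1$; conversely $\sigma$ yields the two quotient maps $C\to C/\langle\sigma\rangle$ and $C\to C/\langle\sigma\iota\rangle$.

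For ($\Rightarrow$), given $\sigma$ I would set $E_1:=C/\langle\sigma\rangle$ and $E_2:=C/\langle\sigma\iota\rangle$, which are curves over $K$, and show they are elliptic. First, the involution $\bar\sigma$ induced on $C/\iota=\P^1_K$ fixes no Weierstrass point: otherwise the unique point of $C$ above it would be $\sigma$-fixed, and a local computation shows $\sigma$ would then have order $4$ with $\sigma^2=\iota$, contradicting $\ord\sigma=2$. Second, since $\sigma$ is defined over $K$, $\bar\sigma$ has a $K$-rational lift, which forces its fixed points to be $K$-rational (for a non-split involution, conjugate over $K$ to $x\mapsto c/x$ with $c\notin(K^*)^2$, any lift would require an element with square $c^3$); moving those fixed points to $0,\infty$ then gives a model $y^2=g(x^2)$ over $K$ with $g$ a separable cubic and $\bar\sigma\colon x\mapsto -x$, so $E_1\colon v^2=g(u)$ and $E_2\colon w^2=u\,g(u)$ are elliptic over $K$ and $\pi_i\colon C\to E_i$ has degree $2$. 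Then $((\pi_1)_*,(\pi_2)_*)\colon\Jac(C)\to E_1\times E_2$ is an isogeny over $K$, and using $\pi_i^*(\pi_i)_*=1+\sigma_i$ (or by evaluating on $2$-torsion in terms of differences of Weierstrass points) one checks that its kernel is $\cong(\Z/2\Z)^2$ and that it carries the principal polarisation to twice the product polarisation, i.e.\ it is a genuine $(2,2)$-isogeny; this is the classical theory of genus-$2$ curves with split Jacobian and could also be quoted.

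For ($\Leftarrow$), suppose $\phi\colon\Jac(C)\to E_1\times E_2$ is an isogeny over $K$ with kernel $(\Z/2\Z)^2$ compatible with the principal polarisations, so $\phi^*$ of the product polarisation is linearly equivalent to $2\Theta$, where $\Theta\subseteq\Jac(C)$ is the theta divisor. Composing the Abel--Jacobi embedding $j\colon C\hookrightarrow\Jac(C)$ based at a Weierstrass point (hence defined over $K$) with $\phi$ and the two projections gives morphisms $g_i\colon C\to E_i$ over $K$. Putting $D_1:=\phi^*(\{0\}\times E_2)$ and $D_2:=\phi^*(E_1\times\{0\})$ one has $D_1+D_2\equiv2\Theta$ and $D_i^2=0$, so $\Theta\cdot D_1+\Theta\cdot D_2=2\Theta^2=4$; since $\Theta=j(C)$ we get $\Theta\cdot D_i=\deg g_i$. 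Each $g_i$ is nonconstant (otherwise $\mathrm{pr}_i\circ\phi$ would be a homomorphism vanishing on the set $\{j(p)-j(q_0):p\in C\}$, which generates $\Jac(C)$, hence would vanish identically, contradicting surjectivity of $\phi$) and not an isomorphism (genus $2$ versus genus $1$), hence $\deg g_i\geq2$; with $\deg g_1+\deg g_2=4$ this forces $\deg g_1=\deg g_2=2$. So each $g_i$ is a degree-$2$, hence Galois, cover of an elliptic curve, and its deck involution is a $K$-automorphism of $C$ of order $2$ distinct from $\iota$ (as $E_i\not\cong\P^1$), which is the required automorphism.

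I expect the main obstacle to be bookkeeping with polarisations: making precise that ``$(\Z/2\Z)^2$-isogeny to a product of elliptic curves'' here means an isogeny compatible with the principal polarisations — equivalently, with isotropic kernel — which is exactly what makes the identification of the kernel as $(\Z/2\Z)^2$ in ($\Rightarrow$) and the intersection computation $\Theta\cdot D_i=2$ in ($\Leftarrow$) valid. A secondary delicate point is the local analysis excluding a fixed Weierstrass point for $\bar\sigma$ (which would instead give the order-$4$ configuration $y^2=x(x^2-a)(x^2-b)$) and the descent of the normalising coordinate change to $K$. An alternative route, closer to the style of the rest of the paper, would be to argue entirely with the Rosenhain invariants and theta constants of Theorem~\ref{thmjacorprod}, checking directly that the relevant theta quotient equals $1$ precisely when the extra involution is present.
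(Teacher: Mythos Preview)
Your argument is correct, and in the $(\Leftarrow)$ direction it follows a genuinely different route from the paper. For $(\Rightarrow)$ both proofs are close in spirit: the paper simply cites \cite[Lemma~2]{shaska} for the normal form $Y^2=(X^2-a^2)(X^2-b^2)(X^2-c^2)$ and writes down the two elliptic quotients, whereas you derive that normal form yourself via the local analysis at a hypothetical fixed Weierstrass point (your order-$4$ observation is correct: if $\bar\sigma$ fixes a branch point then in a local parameter $t$ with $t^2=s$ one has $\bar\sigma\colon s\mapsto -s$, forcing $\sigma\colon t\mapsto \pm i t$) and the descent argument that $\bar\sigma\colon x\mapsto c/x$ with $c\notin(K^*)^2$ would require $d^2=c^3$ for the lift. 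This is a pleasant self-contained replacement for the citation.

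For $(\Leftarrow)$ the paper instead passes to an \emph{optimal} elliptic quotient $E$ through which $C\to E_1$ factors, takes the complementary subvariety $E'=\ker(J(C)\to E)$, and then invokes Kuhn's theorem \cite[Section~2]{kuhn} that the natural isogeny $J(C)\to E\times E'$ has kernel $(\Z/d\Z)^2$ with $d=\deg(C\to E)$; containment of this kernel in the original $(\Z/2\Z)^2$ forces $d=2$. Your route via $\Theta\cdot D_1+\Theta\cdot D_2=2\Theta^2=4$ together with $\deg g_i\geq 2$ is more elementary and avoids the reference to Kuhn, at the price of using that $\phi^*$ of the product polarisation is $2\Theta$. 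You are right to flag this: the lemma as stated does not spell out polarisation compatibility, and your argument genuinely needs the kernel to be isotropic (equivalently, that the $(\Z/2\Z)^2$-isogeny respects the principal polarisations). The paper's argument via optimal quotients is insensitive to this point, which is what the external input from Kuhn buys. In the context in which the lemma is used (Proposition~\ref{exclsetsprop} and Theorem~\ref{bakerthm}, both about the pair $(A,\lambda)$), the polarised reading is the intended one, so your proof goes through.
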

 \begin{proof}
 By \cite[Lemma 2]{shaska}, $C$ admits such an automorphism if and only if $C$ has a model of the form
 \[
 C:\; Y^2=(X^2-a^2)(X^2-b^2)(X^2-c^2),
 \]
 in which case the automorphism is given by $\psi: (X,Y)\mapsto (-X, Y)$. If this is the case, we obtain quotient maps $\pi_1:C\to E_1$ and $\pi_2: C\to E_2$, where $\pi_1$ is the quotient by $\psi$ and $\pi_2$ is the quotient by $\psi\circ \iota$ ($\iota$ here denotes the hyperelliptic involution). Here $E_1$ and $E_2$ have explicit models given by
 \begin{align*}
E_1:&\; Y^2 = (X-a^2)(X-b^2)(X-c^2) \text{ and }\\
E_2:&\; Y^2 = (1-a^2Z)(1-b^2Z)(1-c^2Z).
\end{align*}
Now $\pi_1$ and $\pi_2$ induce an isogeny $J(C)\to E_1\times E_2$ with kernel $(\Z/2\Z)^2$.

Conversely, suppose that we start with an isogeny $J(C)\to E_1\times E_2$ with kernel $(\Z/2\Z)^2$. We obtain maps $\pi_1: C\to E_1$ and $\pi_2: C\to E_2$. Now let $E$ be an optimal quotient of $C$ such that $\pi_1$ factors through $\pi: C\to E$. Then $E'=\mathrm{Ker}(J\to E)$ gives rise to a map $\pi': C\to E'$, which is an optimal quotient such that $J(C)\to E_2$ factors through $J(C)\to E'$. We obtain an isogeny $J(C)\to E\times E'$ with kernel contained in the kernel of $J(C)\to E_1\times E_2$. By \cite[Section 2]{kuhn}, the kernel of $J(C)\to E\times E'$ is $(\Z/d\Z)^2$, where $d$ is the degree of $C\to E$. We conclude that $d$ must equal 2. The degree 2 quotient $C\to E$ thus gives rise to an automorphism on $C$ of order 2 which is not the hyperelliptic involution. 
 \end{proof}
\noindent {\bf Proof of Proposition \ref{exclsetsprop}}. We consider $(i,j)=(1,2)$ and obtain by the previous proposition that $M\cdot P=P$, where $P=(A,\lambda,\al_2)$. By Theorem \ref{thmjacorprod},
%
%
%
 $(A,\lambda)$ is isomorphic over a degree 2 extension $L/K$ to the jacobian of a hyperelliptic curve $C/K$, where $L/K$ is an extension of degree at most 2.  Note that $M$ acts on $P=(A,\lambda,\al_2)$ by
 \[
 M\cdot (A,\lambda,\al_2)=(A,\lambda,M\cdot \al_2).
 \]
 Since $(A,\lambda,\al_2)=(A,\lambda,M\cdot \al_2)$ in the moduli space, then there must be an automorphism $\phi: (A,\lambda)\to(A,\lambda)$ satisfying $\phi^*\al_2=M\al_2$.
 %
 %
  By Torelli's theorem, $\phi$ arises from an automorphism $\psi: C\to C$, and $\psi$ commutes with the hyperelliptic involution $\iota: C\to C$. Let $p_1,\ldots,p_6$ be the Weierstrass points of $C$. Then
 \begin{eqnarray}
 \label{hypjac}
 J(C)[2] & = & \mathrm{Span}_{\F_2}\{[p_i-p_1] :i \in \{2,\ldots,6\}\} \\ 
 & = & \frac{\{\sum_i a_i\cdot p_i \in \bigoplus_{i=1}^6\F_2 p_i \mid \sum_i a_i =0\}}{\F_2(p_1+\ldots+p_6)}.
 \end{eqnarray}
 
Now $\psi$ must act on the set of Weierstrass points, and $\phi^2$ acts trivially on the 2-torsion. 

Let $\overline{G}=\mathrm{Aut}_{\overline{\Q}}(C)/\langle \iota\rangle$, where $\iota$ is the hyperelliptic involution. The elements of $\overline{G}$ are in 1-1 correspondence with automorphisms of $\P^1$ permuting the $x$-coordinates of $\mathcal{P}=\{p_1,\ldots,p_6\}$. Any permutation $\sigma\in S_{\mathcal{P}}$ acts on $J(C)[2]$ via (\ref{hypjac}) and preserves the Weil pairing. This gives rise to the exceptional isomorphism $S_6\simeq \mathrm{Sp}_4(\F_2)$. In particular, $\phi^2$ acting trivially on the 2-torsion implies that $\psi^2$ fixes the Weierstrass points, so $\psi^2\in \{\mathrm{Id},\iota\}$. Moreover, the conjugacy class of $M$ corresponds to a unique cycle type in $S_6$. Since $M$ is an involution and the conjugacy class of $M$ has size 15, $M$ has to correspond to either a product of 1 or 3 transpositions. If $\psi$ fixes four Weierstrass points, $M$ acts trivially on a 3-dimensional $\F_2$-subspace. However, $\mathrm{ker}(M-I_4)$ is 2-dimensional. We conclude that $M$ corresponds to a product of three transpositions.

 On $\P^1$, we may assume $\psi$ acts as $x\mapsto -x$ as $\psi$ has order 2 modulo $\langle \iota \rangle$. Since $\psi$ acts on the Weierstrass points as a product of three transpositions, $C$ must have a model of the form
\[
C:\; Y^2 = (X^2-a^2)(X^2-b^2)(X^2-c^2),\;\; a,b,c\in K.
\]
Here $\psi$ is given by $(X,Y)\mapsto (-X,\pm Y)$. In particular, $\psi$ is an involution, and we can apply the previous Lemma.

Conversely, starting with an isogeny $J(C)\to E_1\times E_2$ we can apply the previous lemma to see that $C$ must admit an order 2 automorphism unequal to the hyperelliptic involution. By \cite[Lemma 2]{shaska}, this automorphism acts as a product of three involutions on the Weierstrass points, so that it must act on $J(C)[2]$ as a conjugate $N$ of $M$, which shows that $N \cdot P = P$, as desired. 
%

\begin{remark}
\label{remarkexclusionsets}
One may wonder whether, or to what extent, the exceptional set in the application of Baker's method violates the obtained height bound. Since the points in this 2-dimensional exceptional set have three pairs of equal coordinates (up to sign), from their point of view there are not 10 but 7 ample divisors $D_i$. So a similar application of Baker's method would give us the same bound for the exceptional set if $|S|<7$, up to a further 1-dimensional exceptional set where another pair of coordinates is equal up to sign. Subsequently, $|S|<4$ allows us to bound this smaller set, up to a 0-dimensional exceptional set. Finally, for $|S|<3$ we have, of course, a stronger bound without exceptions using Runge's method.
\end{remark}

\section{List of G\"opel and azygous quadruples}
G\"opel quadruples:
\begin{alignat*}{2}
    &\{
        ( 0 0 1 1 ),
        ( 0 0 1 0 ),
        ( 1 0 0 1 ),
        ( 1 0 0 0 )
    \}, \quad &&
    \{
        ( 1 1 0 0 ),
        ( 0 0 1 1 ),
        ( 0 1 1 0 ),
        ( 1 0 0 1 )
    \}, \\ &
    \{
        ( 0 0 1 1 ),
        ( 0 0 0 1 ),
        ( 0 1 1 0 ),
        ( 0 1 0 0 )
    \}, &&
    \{
        ( 0 0 1 0 ),
        ( 1 1 1 1 ),
        ( 1 0 0 1 ),
        ( 0 1 0 0 )
    \}, \\ &
    \{
        ( 0 0 0 0 ),
        ( 0 1 1 0 ),
        ( 1 1 1 1 ),
        ( 1 0 0 1 )
    \}, &&\{
        ( 1 1 0 0 ),
        ( 0 0 0 1 ),
        ( 1 0 0 1 ),
        ( 0 1 0 0 )
    \}, \\ &
    \{
        ( 1 1 0 0 ),
        ( 0 0 1 0 ),
        ( 0 1 1 0 ),
        ( 1 0 0 0 )
    \}, &&
    \{
        ( 1 1 0 0 ),
        ( 0 0 0 1 ),
        ( 0 0 1 0 ),
        ( 1 1 1 1 )
    \}, \\ &
    \{
        ( 1 1 0 0 ),
        ( 0 0 1 1 ),
        ( 0 0 0 0 ),
        ( 1 1 1 1 )
    \}, &&
    \{
        ( 0 0 0 1 ),
        ( 0 0 0 0 ),
        ( 1 0 0 1 ),
        ( 1 0 0 0 )
    \}, \\
    &\{
        ( 0 0 0 1 ),
        ( 0 1 1 0 ),
        ( 1 1 1 1 ),
        ( 1 0 0 0 )
    \}, &&
    \{
        ( 0 0 1 1 ),
        ( 0 0 0 1 ),
        ( 0 0 1 0 ),
        ( 0 0 0 0 )
    \}, \\ &
    \{
        ( 0 0 1 0 ),
        ( 0 0 0 0 ),
        ( 0 1 1 0 ),
        ( 0 1 0 0 )
    \}, &&
    \{
        ( 1 1 0 0 ),
        ( 0 0 0 0 ),
        ( 1 0 0 0 ),
        ( 0 1 0 0 )
    \}, \\ &
    \{
        ( 0 0 1 1 ),
        ( 1 1 1 1 ),
        ( 1 0 0 0 ),
        ( 0 1 0 0 )
    \}.
\end{alignat*}
Azygous quadruples:
\begin{alignat*}{2} & \{
        ( 0 0 1 1 ),
        ( 0 0 1 0 ),
        ( 0 1 1 0 ),
        ( 1 1 1 1 )
    \}, \quad && \{
        ( 0 0 1 0 ),
        ( 0 0 0 0 ),
        ( 1 1 1 1 ),
        ( 1 0 0 0 )
    \}, \\ & \{
        ( 1 1 0 0 ),
        ( 0 0 0 1 ),
        ( 0 0 0 0 ),
        ( 0 1 1 0 )
    \}, && \{
        ( 1 1 0 0 ),
        ( 0 0 1 1 ),
        ( 0 0 0 1 ),
        ( 1 0 0 0 )
    \}, \\ & \{
        ( 0 1 1 0 ),
        ( 1 0 0 1 ),
        ( 1 0 0 0 ),
        ( 0 1 0 0 )
    \}, && \{
        ( 0 0 1 1 ),
        ( 0 0 0 0 ),
        ( 0 1 1 0 ),
        ( 1 0 0 0 )
    \}, \\
    & \{
        ( 1 1 0 0 ),
        ( 0 1 1 0 ),
        ( 1 1 1 1 ),
        ( 0 1 0 0 )
    \}, && \{
        ( 0 0 0 1 ),
        ( 0 0 1 0 ),
        ( 0 1 1 0 ),
        ( 1 0 0 1 )
    \}, \\
    & \{
        ( 0 0 0 1 ),
        ( 0 0 1 0 ),
        ( 1 0 0 0 ),
        ( 0 1 0 0 )
    \}, && \{
        ( 1 1 0 0 ),
        ( 0 0 1 1 ),
        ( 0 0 1 0 ),
        ( 0 1 0 0 )
    \}, \\
    & \{
        ( 0 0 1 1 ),
        ( 0 0 0 0 ),
        ( 1 0 0 1 ),
        ( 0 1 0 0 )
    \}, && \{
        ( 0 0 0 1 ),
        ( 0 0 0 0 ),
        ( 1 1 1 1 ),
        ( 0 1 0 0 )
    \}, \\
    & \{
        ( 1 1 0 0 ),
        ( 1 1 1 1 ),
        ( 1 0 0 1 ),
        ( 1 0 0 0 )
    \}, && \{
        ( 0 0 1 1 ),
        ( 0 0 0 1 ),
        ( 1 1 1 1 ),
        ( 1 0 0 1 )
    \}, \\
    & \{
        ( 1 1 0 0 ),
        ( 0 0 1 0 ),
        ( 0 0 0 0 ),
        ( 1 0 0 1 )
    \}. 
\end{alignat*}
\bibliographystyle{amsalpha}
\bibliography{bibliotdn}

\end{document}